\def\l@subsection{\@tocline{2}{0pt}{2.8pc}{5pc}{}}
\def\l@subsubsection{\@tocline{2}{0pt}{5.6pc}{5pc}{}}
\theoremstyle{definition}
\newtheorem{theorem}{Theorem}[section]
\newtheorem{definition}[theorem]{Definition}
\newtheorem{lemma}[theorem]{Lemma}
\newtheorem{remark}[theorem]{Remark}
\newtheorem{example}[theorem]{Example}
\newtheorem*{remark*}{Remark}
\numberwithin{equation}{section}
\newcommand{\mf}{\mathbf}
\newcommand{\mc}{\mathcal}
\newcommand{\mb}{\mathbb}
\newcommand{\mr}{\mathrm}
\newcommand{\mfk}{\mathfrak}
\newcommand{\spt}{\mathrm{spt}}
\newcommand{\dist}{\mathrm{dist}}
\newcommand{\bH}{\mathbf{H}}
\newcommand{\pa}{\partial}
\DeclareMathOperator{\vol}{Vol}
\DeclareMathOperator{\II}{II}
\title[On the topology of Gaussian random zero sets]{On the topology of Gaussian random zero sets}
\date{\today}
\author{Zhengjiang Lin}
\address{Courant Institute of Mathematical Sciences, New York University, 251 Mercer Street, New York, NY10012, USA}
\email{malin at nyu.edu}
\begin{document}

\begin{abstract}
	We study the asymptotic laws for the number, Betti numbers, and isotopy classes of connected components of zero sets of real Gaussian random fields, where the random zero sets almost surely consist of submanifolds of codimension greater than or equal to one. Our results include `random knots' as a special case.
    Our work is closely related to a series of questions posed by Berry in~\cite{berry2001knotted,berry2000phase}; in particular, our results apply to the ensembles of random knots that appear in the complex arithmetic random waves (Example~\ref{Example: complex random wave}), the Bargmann-Fock model (Example~\ref{Example: Bargmann-Fock}), Black-Body radiation (Example~\ref{Example: Black-Body radiation}), and Berry's monochromatic random waves.
    Our proofs combine techniques introduced for level sets of random scalar-valued functions with methods from differential geometry and differential topology.
\end{abstract}

\maketitle

\tableofcontents

\pagebreak

\section{Introduction}

\subsection{Background}

We consider a Gaussian random field $F: \mathbb{R}^n \times (\Omega, \mfk{S},\mc{P}) \to \mathbb{R}^m$ with $n > m$, where $(\Omega, \mfk{S},\mc{P})$ is a probability space. That is, for every $k \in \mb{Z}_+$ and every given $x_1,x_2,\dots,x_k \in \mb{R}^n$, $(F(x_1, \cdot), F(x_2, \cdot), \dots, F(x_k, \cdot) )$ is a joint Gaussian random vector; and for every fixed $\omega \in \Omega$, $F(\cdot, \omega)$ is a map (vector field) from $\mb{R}^n$ to $\mb{R}^m$. 
See Example~\ref{Example: Bargmann-Fock} and Example~\ref{Example: Black-Body radiation} for Gaussian fields defined on $\mb{R}^n$, and also Example~\ref{Example: complex random wave} for Gaussian random fields defined on a manifold.
For this map $F$, one can define its random zero set as $Z(F) \equiv \{x : F(x) = 0 \in \mathbb{R}^m \}$. 
After some regularity assumptions on $F$, $Z(F)$ a.s.~consists of many submanifolds, either closed or open, of codimension $m$, as proved later in Lemma~\ref{L: Quantitative Bulinskaya}.

For example, when $n=3,m=2$, $Z(F)$ possibly contains many knots and links, which is a very interesting and deep branch in algebraic topology. 
On the other hand, `random knots' have appeared in many applied sciences like quantum physics~\cite{berry2001knotted,berry2000phase}, cosmic strings~\cite{vilenkin1994cosmic}, classical fluid flows~\cite{kleckner2013creation}, and superfluid flows~\cite{kleckner2016superfluid}, for example. 
In particular, in~\cite{taylor2016vortex}, numerical experiments showed that random knots are prevailing in many physical models, where those knots are realized by zeros of random complex scalar wavefunctions.

The term `random knots' has its own pure mathematical interest. The random zero set $Z(F)$ actually gives a probability distribution on knots. 
We can probably ask, how many trefoil knots are in $Z(F) \cap B_1 \subset \mathbb{R}^3$? 
Because $Z(F)$ is a random set, the number of trefoil knots is also a random variable. Is this number always a nontrivial (nonzero) random variable? 
What is the expectation or what are the higher moments of such a random variable?
We can ask similar questions on other types of knots and obtain a random empirical distribution on all types of knots. 
Among all types of knots, what are the most typical knots?
We study similar questions for more general $n,m$ and for $F$ defined on more general topological spaces like manifolds.

The zero sets of random scalar-valued functions have been considered by many authors; a selective list of mathematical work on this topic includes~\cite{NS09, NS16, SW19, S16, W21}. 
In considering the zero sets of vector-valued functions, we must deal with the fact that the geometry and topology of submanifolds of codimension greater than one is much richer and more complex than the codimension-one setting.

\subsection{Main Results}\label{Subsection: main results}

\subsubsection{Stationary Gaussian Fields on Euclidean Spaces}

We consider a $C^{3-}$-smooth Gaussian field $F: \mathbb{R}^n \times (\Omega, \mfk{S},\mc{P}) \to \mathbb{R}^{m}$, $m \in(0,n)$. Here, $C^{3-} \equiv \cap_{0 < \beta < 1} C^{2+ \beta}$.
If we write $F=(f_1, f_2,\dots, f_m)$, then if the two point covariance matrix $K(x,y)$ with elements $ K_{ij}(x,y) \equiv \mc{E}(f_i(x)f_j(y))$ is $C^3$-smooth, we can know that $F$ is a.s.~$C^{3-}$-smooth.
Here, we use $\mathcal{E}(\cdot)$ to denote expectations. On the other hand, Kolmogorov's theorem says that this $F$ is uniquely determined by $K$ up to an equivalence.

We give two examples first, and readers can see~\cite{dalmao20193} for more. Although $F$'s in these examples have i.i.d.~components $\{f_i{\}}$, we do not need such an assumption in any of our theorems.
\begin{example}[Bargmann-Fock model]\label{Example: Bargmann-Fock}
    Let ${\{f_i\}}_{i=1} ^m$ be i.i.d.~scalar-valued Gaussian random functions defined on $\mb{R}^n$ with the same two point function $ k(x,y) \equiv \mc{E}(f_1(x)f_1(y)) = e^{-\frac{{||x-y||}^2}{2}}$.
    Here, $||\cdot||$ is the standard $L^2$-distance. Notice that $k(x,y)$ is a function of $x-y$, which we can write as $k(x-y)$. The Fourier transform of $k(z)$ is called the \textit{spectral measure} of $f_1$, which is ${(2\pi)}^{-n/2} e^{-{||z||}^2/2}$.
\end{example}

\begin{example}[Black-Body radiation]\label{Example: Black-Body radiation}
    Let $n=3$ and $f_1,f_2$ be i.i.d.~scalar-valued Gaussian random functions defined on $\mb{R}^3$ with the same two point function $ k(x,y) \equiv \mc{E}(f_1(x)f_1(y)) = \frac{c_1}{{||x-y||}^2} - \frac{c_2 ||x-y|| \cosh{(||x-y||)}}{{\sinh{(||x-y||)}}^2}$.
    Here $c_1,c_2$ are two fixed constants. Again, $k(x,y) = k(x-y)$. The Fourier transform of $k(z)$ is $\frac{c ||z||}{e^{||z||}-1}$ with another constant $c$.
\end{example}
In Example~\ref{Example: Bargmann-Fock} and Example~\ref{Example: Black-Body radiation}, the $k(x,y)$'s are functions of $x-y$.
We say that a random field $F$ is \textit{stationary} if its covariance matrix $K(x,y) = K(x-y)$.


Recall that $Z(F)$ a.s.~consists of many submanifolds, either closed or open. We use $N(R;F)$ to denote the number of connected components of $Z(F)$ fully contained in the open cube $C_R  \equiv {(-R,R)}^n $ of $\mathbb{R}^n$.
Similarly, for each $l = 0, 1, \dots, n-m$, we define $\beta_l(R;F)$ as the sum of $l$-th Betti numbers over $\mathbb{R}$ for connected components of $Z(F)$ fully contained in $C_R$, i.e.,
    \begin{equation}
        \beta_l(R;F) \equiv \sum_{\gamma \subset Z(F) \cap C_R} \beta_l(\gamma),
    \end{equation}
where the summation counts closed components $\gamma$ fully contained in $C_R$ and $\beta_l(\gamma)$ is the $l$-th Betti number over $\mathbb{R}$ of $\gamma$.

We introduce $\mc{C}(n-m)$ as the set of $C^1$-isotopy classes of closed $(n-m)$-dimensional manifolds that can be embedded into $\mathbb{R}^n$ so that the embeddings have trivial normal bundles. 
That is, for any two $(n-m)$-dimensional manifolds $M_1,M_2$ embedded in $\mathbb{R}^n$ with trivial normal bundles, if $M_1$ is $C^1$-isotopic to $M_2$, then they are in the same class $c \in \mathcal{C}$.

This trivial normal bundle assumption is necessary in this paper. 
It is not always correct that for any closed $(n-m)$-dimensional submanifold $M \subset \mathbb{R}^n$, there is a $G \in C^1(\mathbb{R}^n, \mathbb{R}^m)$ such that $M$ is a non-degenerate connected component of $Z(G)$, which means that $\nabla G$ is of full rank on $M$. 
But if the normal bundle of $M$ in $\mathbb{R}^n$ is trivial, then it is possible to find such a $G$. 
We also notice that $\mc{C}(n-m)$ is a countable set, because any $G$ can be approximated on a compact set by polynomials.

Embeddings of $\mb{S}^1$ always have trivial normal bundles in any $\mb{R}^n$. So, when $n=3,m=2$, $\mc{C}(3-2)$ consists of the classes of all knots. One can also replace $\mc{C}(3-2)$ with all possible links in the following theorems, but for simplicity, we keep this definition.

In the following theorems, we let $N(R;F,c)$ be the number of connected components of $Z(F)$ of class $c \in \mc{C}(n-m)$ which are fully contained in $C_R$.
Example~\ref{Example: Bargmann-Fock}, Example~\ref{Example: Black-Body radiation}, and Berry's monochromatic random waves model all satisfy the assumptions in the following theorems.

\begin{theorem}\label{Main Results: Local}
	\textit{
    Assume that $F:\mathbb{R}^n \to \mathbb{R}^m$, $m \in (0,n)$, is a $C^{3-}$-smooth centered stationary Gaussian random field such that the joint distribution of $(F(0),\nabla F(0))$ is non-degenerate and mean-zero.
    \begin{itemize}
        \item[(1)] For each $c \in \mc{C}(n-m)$, there exists a number $\nu_{F,c} \geq 0$ so that
                    \begin{equation}
                        \mathcal{E}(N(R;F,c)) = \nu_{F,c} \cdot |C_R| + o(R^n),
                    \end{equation} 
                    as $R \to +\infty$. Here $|C_R|$ is the volume of $C_R$.
        \item[(2)] If the translation action of $\mathbb{R}^n$ is ergodic, then
                    \begin{equation}
                        \lim_{R \to \infty}\frac{N(R;F,c)}{|C_R|} = \nu_{F,c} \ \text{almost surely} \quad \text{and} \quad \lim_{R  \to \infty}\mathcal{E} \bigg| \frac{N(R;F,c)}{|C_R|} - \nu_{F,c} \bigg| = 0 .
                    \end{equation}
    \end{itemize}
	(1) and (2) also hold true if one replaces $N(R;F,c)$ with $N(R;F)$ and replaces $\nu_{F,c}$ with $\nu_F \geq 0$.
    \begin{itemize}
	\item[(3)]
	    \begin{equation}
		\nu_F = \sum_{c \in \mc{C}(n-m)} \nu_{F,c} .
	    \end{equation} 
    \end{itemize}
    }
\end{theorem}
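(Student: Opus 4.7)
My approach would follow the Nazarov-Sodin ``integrated sandwich'' strategy, adapted to the vector-valued, isotopy-labelled setting. For part~(1), set $\phi_c(R) := \mc{E}[N(R;F,c)]$. The first task is to establish a uniform bound $\phi_c(R) \leq \phi(R) \leq C R^n$, which follows from a Kac-Rice/Bulinskaya volume estimate for $Z(F) \cap C_R$ (supplied by the non-degeneracy of $(F(0),\nabla F(0))$ and Lemma~\ref{L: Quantitative Bulinskaya}) together with a uniform positive lower bound on the size of any non-degenerate $C^{3-}$ component, e.g., by producing a critical point of a coordinate projection on each component and applying Kac-Rice to the joint condition $F=0$, $\pa_1 F = 0$. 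Next, since the $C^1$-isotopy class of a component embedded in $\mb{R}^n$ is intrinsic to the embedding, partitioning $C_R$ into $k^n$ disjoint sub-cubes each isometric to $C_{R/k}$ gives by stationarity the near-superadditivity
\begin{equation*}
\phi_c(R) \geq k^n \phi_c(R/k).
\end{equation*}
Hence $\phi_c(R)/|C_R|$ is non-decreasing along dyadic scales and bounded above, so it converges to some $\nu_{F,c} \in [0,\infty)$; standard interpolation across non-dyadic $R$ together with $|C_R|=(2R)^n$ upgrades this to the stated $o(R^n)$ error. The same argument with $c$ suppressed handles $\nu_F$.

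For part~(2), I would use the sandwich
\begin{equation*}
\sum_i N(r;F\circ\tau_{x_i},c) \leq N(R;F,c) \leq \sum_i N(r;F\circ\tau_{x_i},c) + B_r(R),
\end{equation*}
where $\{x_i\}$ are the centers of a tiling of $C_R$ by cubes of side $2r$, $\tau_{x_i}$ denotes translation, and $B_r(R)$ counts components of $Z(F)$ fully contained in $C_R$ but meeting the $(n-1)$-skeleton of the tiling. Under ergodicity, Birkhoff's multiparameter ergodic theorem for the $(2r)\mb{Z}^n$-action gives almost surely and in $L^1$
\begin{equation*}
\frac{1}{|C_R|}\sum_i N(r;F\circ\tau_{x_i},c) \xrightarrow{R\to\infty} \frac{\phi_c(r)}{|C_r|} \xrightarrow{r\to\infty} \nu_{F,c}.
\end{equation*}
Provided $\limsup_R B_r(R)/|C_R| \to 0$ as $r\to\infty$ almost surely and in expectation, the a.s.\ and $L^1$ statements follow; uniform integrability for the $L^1$ claim is supplied by the Kac-Rice volume bound. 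For part~(3), since $N(R;F) = \sum_{c\in\mc{C}(n-m)} N(R;F,c)$ pointwise, Fatou's lemma applied to the counting measure on $\mc{C}(n-m)$ gives $\sum_c \nu_{F,c} \leq \nu_F$; for the reverse, $\sum_{c\in S}\mc{E}[N(R;F,c)]/|C_R| \leq \mc{E}[N(R;F)]/|C_R|$ for every finite $S \subset \mc{C}(n-m)$, so taking $R \to \infty$ and then $S \uparrow \mc{C}(n-m)$ yields $\sum_c \nu_{F,c} \geq \nu_F$.

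The chief obstacle is the boundary estimate $\limsup_R B_r(R)/|C_R| \to 0$ as $r \to \infty$. Each component counted by $B_r(R)$ meets the $(n-1)$-skeleton of the tiling, and distinct components yield disjoint traces on the skeleton, so $B_r(R)$ is dominated by the number of connected components of $Z(F) \cap \Pi$ summed over the faces $\Pi$. For $m=1$ this is immediate from Kac-Rice on each scalar field $F|_\Pi$. For $m \geq 2$, however, $Z(F) \cap \Pi$ is a positive-dimensional $(n-m-1)$-submanifold rather than a point set, and bounding its number of connected components requires an additional idea: either an inductive invocation of the present theorem in dimension $n-1$ with codimension $m$, or a Morse/Gauss-Bonnet argument controlling the components of $Z(F|_\Pi)$ via integrated extrinsic curvature using the differential-geometric techniques the abstract advertises. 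Either route relies on the fact that $F|_\Pi$ inherits non-degeneracy of $(F|_\Pi(0), \nabla(F|_\Pi)(0))$ from the hypothesis on $F$ for the face orientations aligned with the coordinate axes.
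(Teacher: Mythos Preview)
Your outline is essentially the paper's: the sandwich of Lemma~\ref{L: Integral Sandwich}, Wiener's ergodic theorem (in place of your discrete Birkhoff), and the reduction of the boundary term to component counts on skeleton faces are exactly what happens in Section~\ref{Section: Stationary Random Field}. You also correctly single out the boundary estimate as the crux and name the paper's actual resolution---controlling the number of components of $Z(F|_\Pi)$ via an integrated extrinsic-curvature bound. Indeed Theorem~\ref{Thm: L^1 Bound on Number} (Fenchel--Borsuk--Willmore--Chern--Lashof combined with Kac--Rice with integrand, plus the singular-integral Lemma~\ref{L: Singular Integral}) is the single workhorse: it is applied in ambient dimension $n$ for the upper bound $\phi(R)\le C|C_R|$ and then again in each dimension $k=m,\dots,n-1$ on the skeleton faces to obtain $\mc{E}[\mfk{N}_\#(r;F)]\le Cr^{n-1}$.

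Two slips to fix. First, ``$F=0,\ \pa_1 F=0$'' is not the critical set of $x_1|_{Z(F)}$: a critical point requires $e_1\in\mr{span}(\nabla f_1,\dots,\nabla f_m)$, whereas $\pa_1 F=0$ says $e_1\perp\nabla f_i$ for all $i$, i.e.\ $e_1$ is \emph{tangent} to $Z(F)$. A correct critical-point count (e.g.\ via Lagrange multipliers $(x,\lambda)\mapsto(F(x),\nabla F(x)^T\lambda-e_1)$) is workable but messier than the curvature route and runs into the same singular-integral issues handled by Lemma~\ref{L: Singular Integral}. Second, your argument for the ``reverse'' inequality in part~(3) proves the same direction as Fatou: from $\sum_{c\in S}\phi_c(R)\le\phi(R)$ you only ever extract $\sum_c\nu_{F,c}\le\nu_F$. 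The missing half uses the monotonicity you already established: superadditivity gives $\phi_c(r)/|C_r|\le\nu_{F,c}$ for every fixed $r$, whence by monotone convergence $\phi(r)/|C_r|=\sum_c\phi_c(r)/|C_r|\le\sum_c\nu_{F,c}$, and sending $r\to\infty$ yields $\nu_F\le\sum_c\nu_{F,c}$.
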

See more details in Section~\ref{Section: Stationary Random Field} about the translation action of $\mathbb{R}^n$ and the definition of ergodicity. 

A natural question is when $\nu_F$ and $\nu_{F,c}$'s are positive. 
In the special case when $F$ consists of independent ${\{f_i\}}_{i=1} ^m$, if for each $i = 1, \dots, m$, the spectral measure $\rho_i$ associated with $f_i$ is the surface measure on the unit sphere $\mb{S}^{n-1}$ (Berry's monochromatic random waves), or if its support has a nonempty interior, then one can get that $\nu_F >0$ and $\nu_{F,c}>0$ for all $c \in \mc{C}(n-m)$.
See also~\cite{NS16,SW19, W21} for the scalar-valued ($m=1$) case. We will also discuss some weaker assumptions in Appendix~\ref{APP: Positivity}. 

For the Betti numbers $\beta_l(R;F)$, we have the following counterpart.

\begin{theorem}\label{Main Results: Local Betti}
	\textit{
    Assume that $F:\mathbb{R}^n \to \mathbb{R}^m$, $m \in (0,n)$, is a $C^{3-}$-smooth centered stationary Gaussian random field such that the joint distribution of $(F(0),\nabla F(0))$ is non-degenerate and mean-zero. Then, for each $l = 0,1, \dots ,n-m$, we have the following results.
    \begin{itemize}
        \item[(1)] There exists a number $\nu_{l;F} \geq 0$ so that
                    \begin{equation}
                        \mathcal{E}(\beta_l(R;F)) = \nu_{l;F} \cdot |C_R| + o (R^n),
                    \end{equation} 
                    as $R \to +\infty$.
        \item[(2)] If the translation action of $\mathbb{R}^n$ is ergodic, then
                    \begin{equation}
                        \lim_{R  \to \infty}\mathcal{E} \bigg| \frac{\beta_l(R;F)}{|C_R|} - \nu_{l;F} \bigg| = 0 .
                    \end{equation}
    \end{itemize}
    }
\end{theorem}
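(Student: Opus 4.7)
My plan mirrors the integral-geometric sandwich that proves Theorem~\ref{Main Results: Local}. For a cube $Q \subset \mb{R}^n$, let $\widetilde{\beta}_l(Q;F)$ denote the sum of $l$-th Betti numbers over connected components of $Z(F)$ fully contained in $Q$; by definition $\beta_l(R;F) = \widetilde{\beta}_l(C_R;F)$. Fixing $r \ll R$ and tiling $C_R$ by disjoint sub-cubes $\{Q_j\}$ of side length $2r$, every component inside some $Q_j \subset C_R$ is \emph{a fortiori} inside $C_R$, yielding the one-sided sandwich
\begin{equation*}
    \sum_j \widetilde{\beta}_l(Q_j;F) \; \leq \; \beta_l(R;F).
\end{equation*}
By stationarity, each $\mc{E}(\widetilde{\beta}_l(Q_j;F))$ equals $\mc{E}(\widetilde{\beta}_l(C_r;F))$, so dividing through by $|C_R|$ gives $\liminf_{R\to\infty} \mc{E}(\beta_l(R;F))/|C_R| \geq \mc{E}(\widetilde{\beta}_l(C_r;F))/|C_r|$ for every fixed $r$.

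The crux is the matching upper bound. I plan to control the straddling excess $\beta_l(R;F) - \sum_j \widetilde{\beta}_l(Q_j;F)$ via Morse theory: pick a generic linear height function $h$ on $\mb{R}^n$, so that $h|_{Z(F)}$ is almost surely Morse on each closed component $\gamma$, and the weak Morse inequality gives $\beta_l(\gamma) \leq \#\{\text{critical points of } h|_\gamma \text{ of index } l\}$. Critical points of $h|_{Z(F)}$ are precisely the zeros of the random section $x \mapsto (F(x), P_{T_x Z(F)}\nabla h)$, where $P_{T_x Z(F)}$ is the tangential projection. In the non-degenerate regime granted by Lemma~\ref{L: Quantitative Bulinskaya} applied to an enlarged system, Kac-Rice provides a translation-invariant expected density for these critical points, yielding the uniform bound $\mc{E}(\beta_l(R;F)) \leq C(F) \cdot |C_R|$. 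The main obstacle is to isolate the straddling contribution: I expect to bound the expected number of critical points on components meeting the internal tiling grid by $O(R^n/r)$ via a Nazarov-Sodin-style covering argument on the grid hyperplanes, which becomes $O(1/r)$ after normalization and vanishes as $r \to \infty$.

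Once the two-sided sandwich is in place, a Fekete-type subadditivity argument identifies $\nu_{l;F} := \lim_{r\to\infty} \mc{E}(\widetilde{\beta}_l(C_r;F))/|C_r|$ and proves part~(1). For part~(2), the stationary, $L^1$-bounded process $x \mapsto \widetilde{\beta}_l(x + C_r;F)$ at fixed $r$ falls within the scope of the pointwise ergodic theorem on $\mb{R}^n$: its spatial averages over $C_R$ converge almost surely and in $L^1$ to a translation-invariant random variable, which under ergodicity is constant and, by part~(1), equals $\nu_{l;F}$. Combined with the sandwich and the $O(1/r)$ straddling estimate, this produces the stated $L^1$ convergence of $\beta_l(R;F)/|C_R|$ to $\nu_{l;F}$.
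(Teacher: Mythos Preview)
Your plan has a genuine gap at the straddling step. You propose to bound the expected number of critical points of a linear height function $h$ lying on components that meet the tiling grid by $O(R^n/r)$, via a covering argument on the grid hyperplanes. But such an argument can only control quantities localized \emph{on} the grid; the critical points of $h|_{Z(F)}$ on a straddling component may sit anywhere in $C_R$, and a single giant component crossing the grid can carry $\Theta(R^n)$ critical points far from any hyperplane. The paper says exactly this at the end of Section~\ref{Section: Stationary Random Field}: for Betti numbers one cannot build an analogue of $\mfk{N}_\#(r;F)$ with $\mc{E}(\mfk{N}_\#(r;F)) \leq C r^{n-1}$, which is precisely why Theorem~\ref{Main Results: Local Betti} asserts only $L^1$ and not almost-sure convergence. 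Your $O(R^n/r)$ bound, if true, would in fact upgrade the conclusion to almost-sure convergence by the same mechanism as for $N(R;F)$; that this is not claimed is evidence the bound is unavailable.

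The straddling estimate is also unnecessary for what is actually stated. The paper uses only the one-sided sandwich (Lemma~\ref{L: Integral Sandwich Betti}) together with the uniform upper bound $\mc{E}(\beta_l(R;F)) \leq \widetilde D_1\,|C_R|$ of Theorem~\ref{Thm: L^1 Bound on Betti}. For part~(1), set $\nu_{l;F} := \limsup_{R} \mc{E}(\beta_l(R;F))/|C_R|$, finite by the upper bound; the one-sided sandwich gives $\liminf_R \mc{E}(\beta_l(R;F))/|C_R| \geq \mc{E}(\beta_l(r;F))/|C_r|$ for every $r$, and choosing $r$ along a sequence realizing the $\limsup$ forces $\liminf = \limsup$. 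For part~(2), with $\Phi(R,r) = |C_r|^{-1}\int_{C_{R-r}} \beta_l(x,r;F)\,dx \leq \beta_l(R;F)$, write
\[
\bigg|\frac{\beta_l(R;F)}{|C_R|} - \nu_{l;F}\bigg| \;\leq\; \Big(\frac{\beta_l(R;F)}{|C_R|} - \frac{\Phi(R,r)}{|C_R|}\Big) + \bigg|\frac{\Phi(R,r)}{|C_R|} - \frac{\mc{E}(\beta_l(r;F))}{|C_r|}\bigg| + \bigg|\frac{\mc{E}(\beta_l(r;F))}{|C_r|} - \nu_{l;F}\bigg|.
\]
Taking expectations and $R\to\infty$: the first (nonnegative) term has expectation tending to $\nu_{l;F} - \mc{E}(\beta_l(r;F))/|C_r|$ by part~(1); the second vanishes by Wiener's ergodic theorem; the third is deterministic. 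Then let $r\to\infty$. No upper sandwich enters.

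As for the uniform upper bound itself, your Morse-theoretic route (count critical points of a fixed linear $h$ on $Z(F)$ via Kac--Rice) is a legitimate alternative to the paper's Chern--Lashof inequality (Theorem~\ref{Thm: Chern}); the two are close cousins, since Chern--Lashof is essentially the Morse inequality averaged over all height directions on the unit sphere. Either approach yields $\mc{E}(\beta_l(R;F)) \leq C\,|C_R|$, which is all part~(1) requires.
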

Under the assumptions we discussed after Theorem~\ref{Main Results: Local} (or under weaker assumptions discussed in Appendix~\ref{APP: Positivity}), these $\nu_{l;F}$ are positive numbers.


\subsubsection{Gaussian Fields Ensembles on Manifolds}

We turn to some families of Gaussian random fields $\{F_L{\}}$ defined on $X^n$, a $n$-dimensional closed manifold, where each $F_L$ takes values in $\mb{R}^m$ with $n>m$.
The results that we discuss here are closely related to questions raised by Berry at the end of his paper~\cite{berry2001knotted} and section 7 of another paper~\cite{berry2000phase}.
We first give an example of a family of Gaussian random fields, called complex arithmetic random waves, see for example~\cite{dalmao2019phase}. This model also appeared in many numerical experiments on random knots, see for example~\cite{taylor2016vortex}.

\begin{example}\label{Example: complex random wave}
	We choose the $n$-dimensional torus $X^n = \mb{T}^n = \mb{R}^n / \mb{Z}^n$. Let $\mc{H}_L$ be the subspace of $L^2(X)$ that consists of trigonometric polynomials of degree $\lambda \in \mb{Z}^n$ with $ |\lambda|_2 = L$. 
 $L$ takes values in the set $\mc{L} \equiv \{L = {(\sum_{i=1} ^n \lambda_i ^2)}^{1/2}  : \lambda_1,\lambda_2, \dots,\lambda_n \in \mb{Z} {\}}  $.
	We can define a complex Gaussian distribution on $\mc{H}_L$ by
		\begin{equation}
			F_L(x) = \sum_{\lambda \in \mb{Z}^n, |\lambda|_2 = L} (\xi_{\lambda} + \eta_{\lambda} \cdot i ) e^{(2\pi i \langle \lambda , x \rangle)},
		\end{equation}
	where $\xi_\lambda,\eta_\lambda$ are i.i.d.~standard real Gaussian random variables, $\langle \lambda , x \rangle$ is the standard inner product in $\mb{R}^n$.
	We call the sequence of complex random fields $\{F_L{\}}$ the ensemble of complex arithmetic random waves.
	It is not hard to see that $\mr{Re}( F_L(x))$ and $\mr{Im}( F_L(x))$ are two independent real Gaussian random functions and they have the same distribution. Hence, $Z(F_L) \equiv \{ x \in X : F_L(x) = 0 {\}}$ should consist of submanifolds of codimension $2$.
	In particular, when $n=3$, it is the random knot model studied in~\cite{taylor2016vortex} and is closely related to Berry's work~\cite{berry2001knotted, berry2000phase}.
\end{example}

In Section~\ref{Section: Local Double Limit and Global Limit}, we will give another example of $\{F_L\}$ defined on $\mb{S}^{n}$, Example~\ref{Example}, called Kostlan's ensemble, to illustrate more technical details.
For those $\{F_L\}$ satisfying the technical but natural assumptions in Section~\ref{Section: Local Double Limit and Global Limit}, which include complex arithmetic random waves and the Kostlan's ensemble, we say that $\{F_L\}$ is \textit{tame}.
We first state the results for $\{F_L\}$ defined on an open set of $\mb{R}^n$ and then state it on general closed Riemannian manifolds.

\begin{theorem}\label{Main Results: Global 1}
	\textit{
	If ${\{F_L\}}_{L \in \mathcal{L}}$ is tame on an open set $U \subset \mathbb{R}^n$, then we have the following:
    \begin{itemize}
        \item [(1)] There is a measurable locally bounded function $x \mapsto \bar{\nu}(x )$ on $U$, such that
                    for every sequence of connected component counting measures $n_L$ of $F_L$ and for every $\varphi \in C_{c}(U)$, we have that
                        \begin{equation}
                            \lim_{L \to \infty} \mathcal{E} \bigg[ \bigg |  \frac{1}{L^n} \int_U \varphi(x)  dn_L(x) - \int_U \varphi(x) \bar{\nu}(x) dx \bigg | \bigg] = 0.
                        \end{equation}
        \item[(2)] 	In particular, for a bounded smooth domain $D \Subset U$,
                        \begin{equation}
                            \lim_{L \to \infty} \mathcal{E} \bigg[ \bigg |  \frac{N(D;F_L)}{L^n}  - \int_D  \bar{\nu}(x) dx \bigg | \bigg] = 0,
                        \end{equation}
                    where $N(D;F_L)$ is the number of connected components of $Z(F_L)$ fully contained in $D$.
    \end{itemize}
	}
\end{theorem}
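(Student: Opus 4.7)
The plan is to prove Theorem~\ref{Main Results: Global 1} by reducing its local asymptotic statement on $U$ to the stationary Euclidean case established in Theorem~\ref{Main Results: Local}, via a rescaling-and-partition scheme.

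For each $x \in U$, define the rescaled field $F_L^{x}(y) := F_L(x + y/L)$. The tameness hypothesis (Section~\ref{Section: Local Double Limit and Global Limit}) asserts that, as $L \to \infty$, $F_L^x$ converges in a strong $C^{3-}$-local sense to a centered stationary $C^{3-}$ Gaussian field $F^x$ on $\mb{R}^n$ whose one-point distribution $(F^x(0),\nabla F^x(0))$ is non-degenerate. Set $\bar\nu(x) := \nu_{F^x}$, the density supplied by Theorem~\ref{Main Results: Local}(1). Measurability of $x \mapsto \bar\nu(x)$ follows from the measurable dependence of the covariance of $F^x$ on $x$ together with continuity of $\nu$ with respect to the covariance; local boundedness follows from a uniform Kac--Rice-type upper bound for the density of connected components.

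Next I would prove the integrated estimate by a cube partition. Fix a compact $K \Subset U$ containing $\spt \varphi$ and, for small $\varepsilon > 0$, partition $K$ into cubes $Q_k = x_k + {(-\varepsilon/2,\varepsilon/2)}^n$. In rescaled coordinates $y = L(x - x_k)$, the cube $Q_k$ becomes $C_{L\varepsilon/2}$, and Theorem~\ref{Main Results: Local}(1) applied to $F^{x_k}$ yields
\begin{equation*}
\mathcal{E}\bigl( N(L\varepsilon/2 ; F^{x_k}) \bigr) = \bar\nu(x_k) \cdot {(L\varepsilon)}^n + o(L^n).
\end{equation*}
The local $C^{3-}$ convergence $F_L^{x_k} \to F^{x_k}$, combined with the quantitative Bulinskaya lemma (Lemma~\ref{L: Quantitative Bulinskaya}) applied to both fields, is then used to replace the left-hand side by the expected number of components of $F_L$ lying in $Q_k$, up to an $o({(L\varepsilon)}^n)$ error as $L \to \infty$. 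Summing over the $O(\varepsilon^{-n})$ cubes, approximating $\varphi$ by the step function $\sum_k \varphi(x_k)\mathbf{1}_{Q_k}$, sending $L\to\infty$ first and then $\varepsilon\to 0$, and using uniform continuity of $\varphi$ on $K$ together with local boundedness of $\bar\nu$ produce the Riemann integral $\int_U \varphi\,\bar\nu\,dx$ and give part~(1). Part~(2) follows by approximating $\mathbf{1}_D$ from above and below by $C_c$ test functions and exploiting smoothness of $\pa D$ to control the layer of cubes that straddle $\pa D$.

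The main obstacle is the boundary contribution: components of $Z(F_L)$ that cross the skeleton $\bigcup_k \pa Q_k$, and, in part~(2), components that straddle $\pa D$. In codimension $m \geq 2$ these components can be knotted or linked submanifolds of substantial diameter, so one cannot simply bound their number by the $(n-1)$-volume of a thin boundary layer. The cleanest route I see is to re-apply the stationary density bound on slightly enlarged cubes covering a tubular neighborhood of the skeleton, producing a total contribution of order $L^n \cdot \varepsilon$ once one accounts for the $O(\varepsilon)$ volume of the boundary layer, and then to exploit the order of limits $L\to\infty$ followed by $\varepsilon\to 0$. Making this bound uniform in $L$, while preserving the stability of the counting functional under the local scaling limit near each cube boundary, is where the main technical work lies.
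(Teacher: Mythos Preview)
Your overall architecture (rescale, compare to the stationary local limit, partition into small cubes, control the boundary) matches the paper's, but two essential ingredients are missing and your proposed boundary control would not work.

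\textbf{The boundary term.} A component of $Z(F_L)$ that crosses $\bigcup_k \partial Q_k$ need not be \emph{contained} in any thin tubular neighborhood of the skeleton; it may have large diameter. Since the density bound (Theorem~\ref{Thm: L^1 Bound on Number}) counts only components \emph{fully contained} in a region, applying it to an $O(\varepsilon)$-layer does not bound the number of crossing components. The paper's fix is different: it restricts $F_L$ to the $(n-1)$-dimensional faces of the cube skeleton. On each $k$-dimensional face the restricted field still satisfies $(A1)$--$(A2)$, so Theorem~\ref{Thm: L^1 Bound on Number} applied in dimension $k$ gives $\mc{E}(\mfk{N}_k(r;F)) \leq C\,r^{k}$, whence $\mc{E}(\mfk{N}_\#(r;F)) \leq C\,r^{n-1}$. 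In the paper's parametrization this yields a boundary contribution of order $T^{-1}$, killed by an \emph{additional} parameter $T \to \infty$ taken after $R\to\infty$ and before $\delta\to 0$. Your scheme, with a single scale $\varepsilon$ tied to $R=L\varepsilon/2$, has no room for this extra limit.

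\textbf{From probability to $L^1$.} The statement asks for $\mc{E}|\cdot|\to 0$, not merely convergence of expectations. The transfer from $F_L^{x_k}$ to $F^{x_k}$ via Bulinskaya and $C^1$-stability (the paper's Lemma~\ref{L: Double Limit in Probability}) gives only convergence \emph{in probability}, on a fixed rescaled cube $C_R$. Upgrading this to $L^1$ requires uniform integrability of $N(R;F_{x,L})/|C_R|$, which the paper obtains from the $L^q$ bound with $q=q(n,m)>1$ (Theorem~\ref{Thm: L^q Bound on Number}); this is precisely where hypothesis $(B4)$ enters. Without it your argument stalls at convergence of means.
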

Here, we say that a Borel measure $n_L$ is a connected component counting measure of $F_L$ if $\spt (n_L) \subset Z(F_L)$ and the $n_L$-mass of each component of $Z(F_L)$ is $1$. These $n_L$'s are random.

Now, we state a manifold version of Theorem~\ref{Main Results: Global 1}. 
Suppose that $X$ is an $n$-dimensional $C^3$-manifold without boundary. 

\begin{theorem}\label{Main Results: Global 2}
    \textit{
    Assume that ${\{F_L\}}_{L \in \mathcal{L}}$ is tame on $X$, then we have the following:
		\begin{itemize}
			\item [(1)] There is a locally finite Borel non-negative measure $n_\infty$ on $X$ such that for every choice of connected component counting measures $n_L$ of $F_L$ and every $\varphi \in C_{c}(X)$, we have that
                \begin{equation}
                    \lim_{L \to \infty} \mathcal{E} \bigg[ \bigg |  \frac{1}{L^n} \int_X \varphi(x)  dn_L(x) - \int_X \varphi(x) dn_\infty(x) \bigg | \bigg] = 0.
                \end{equation}
            \item [(2)] In particular, when $X$ is a closed manifold,
                \begin{equation}
                    \lim_{L \to \infty} \mathcal{E} \bigg[ \bigg |  \frac{N(X;F_L)}{L^n}  -  n_{\infty}(X) \bigg | \bigg] = 0,
                \end{equation}
                where $N(X;F_L)$ is the number of connected components of $Z(F_L)$.
		\end{itemize}
	}
\end{theorem}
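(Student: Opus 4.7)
The plan is to deduce Theorem~\ref{Main Results: Global 2} from Theorem~\ref{Main Results: Global 1} via a coordinate-chart and partition-of-unity argument, after first constructing the limit measure $n_\infty$ by gluing local densities. First I would fix a locally finite open cover ${\{U_\alpha\}}_{\alpha \in A}$ of $X$ by relatively compact coordinate charts with diffeomorphisms $\phi_\alpha: U_\alpha \to V_\alpha \subset \mb{R}^n$. By the manifold version of tameness (Section~\ref{Section: Local Double Limit and Global Limit}), the pullback family $\tilde F_L^{(\alpha)} \equiv F_L \circ \phi_\alpha^{-1}$ is tame on $V_\alpha$, so Theorem~\ref{Main Results: Global 1}(1) furnishes a locally bounded measurable density $\bar\nu_\alpha$ on $V_\alpha$; I then set $\mu_\alpha \equiv (\phi_\alpha^{-1})_* (\bar\nu_\alpha \, dx)$, a locally finite Borel measure on $U_\alpha$.

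Second, I would show $\mu_\alpha = \mu_\beta$ on every overlap $U_\alpha \cap U_\beta$. For any $\psi \in C_c(U_\alpha \cap U_\beta)$ and any connected component counting measure $n_L$ of $F_L$, the restriction of $n_L$ to $U_\alpha$, pushed forward by $\phi_\alpha$, gives a measure $\tilde n_L^{(\alpha)}$ on $V_\alpha$ that assigns mass $\leq 1$ to each local component of $Z(\tilde F_L^{(\alpha)})$ (since the pieces into which a single global component splits have total $n_L$-mass exactly $1$). One can thus enlarge $\tilde n_L^{(\alpha)}$ to an honest connected component counting measure of $\tilde F_L^{(\alpha)}$ by adding extra point masses \emph{outside} $\phi_\alpha(\spt \psi)$, leaving $\int (\psi \circ \phi_\alpha^{-1}) \, d\tilde n_L^{(\alpha)}$ unchanged. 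Applying Theorem~\ref{Main Results: Global 1}(1) in either chart then forces both $\int \psi \, d\mu_\alpha$ and $\int \psi \, d\mu_\beta$ to equal the common $L^1(\mc{P})$-limit of $L^{-n}\int_X \psi \, dn_L$, so they coincide. The resulting compatible family ${\{\mu_\alpha\}}$ glues into a locally finite Borel measure $n_\infty$ on $X$.

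Third, for $\varphi \in C_c(X)$ I would pick a finite subcover $U_{\alpha_1},\dots,U_{\alpha_k}$ of $\spt(\varphi)$ and a smooth partition of unity ${\{\chi_i\}}_{i=1}^k$ with $\sum_i \chi_i \equiv 1$ on $\spt(\varphi)$ and $\spt(\chi_i) \Subset U_{\alpha_i}$. By linearity and the triangle inequality in $L^1(\mc{P})$,
\begin{equation}
    \mc{E} \bigg[ \bigg| \frac{1}{L^n}\int_X \varphi \, dn_L - \int_X \varphi \, dn_\infty \bigg| \bigg] \leq \sum_{i=1}^k \mc{E} \bigg[ \bigg| \frac{1}{L^n}\int_{U_{\alpha_i}} \chi_i\varphi \, dn_L - \int_{U_{\alpha_i}} \chi_i\varphi \, d\mu_{\alpha_i} \bigg| \bigg],
\end{equation}
and each summand on the right tends to $0$ by Theorem~\ref{Main Results: Global 1}(1) applied in the chart $V_{\alpha_i}$, combined with the local-completion trick of the previous paragraph. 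This gives (1); then (2) follows by taking $\varphi \equiv 1 \in C_c(X)$ (valid because $X$ is closed) and using $\int_X 1 \, dn_L = N(X;F_L)$ and $\int_X 1 \, dn_\infty = n_\infty(X)$.

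The main obstacle I anticipate is precisely the gluing step: connected components of $Z(F_L)$ in $X$ are global objects, while Theorem~\ref{Main Results: Global 1} only sees components within a Euclidean chart, and a single global component may cross a chart boundary many times and thereby split into several local components. The saving observation is the inequality $\tilde n_L^{(\alpha)}(\eta) \leq 1$ for each local component $\eta$, together with the uniformity of Theorem~\ref{Main Results: Global 1} over all choices of counting measure; these together let me repair the restriction to a bona fide local counting measure without disturbing the relevant integrals.
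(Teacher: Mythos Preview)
Your proposal is correct and matches the paper's intended approach: the paper does not spell out a proof of Theorem~\ref{Main Results: Global 2} but simply states that, once Theorem~\ref{Thm: Global Limit} (the restatement of Theorem~\ref{Main Results: Global 1}) is in hand, the manifold case follows exactly as in Theorem~3 of~\cite{NS16}. Your chart-and-partition-of-unity argument, together with the observation that a local component of $Z(F_L)$ compactly contained in a chart is already a global component (so the ``completion by point masses outside $\spt\psi$'' trick is legitimate), is precisely that Nazarov--Sodin reduction.
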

\begin{remark}
    $n_{\infty}(\cdot)$ in Theorem~\ref{Main Results: Global 2}, $\bar{\nu}(\cdot)$ in Theorem~\ref{Main Results: Global 1}, and $\bar{\nu}_l(\cdot)$'s in Theorem~\ref{Main Results: Global Betti} later, are strictly positive under assumptions similar to those we discussed for Theorem~\ref{Main Results: Local} and Theorem~\ref{Main Results: Local Betti}.
    In particular, for the complex arithmetic random waves and Kostlan's ensemble, the corresponding $n_{\infty}(\cdot)$ and $\bar{\nu}_l(\cdot)$'s are strictly positive.
\end{remark}

\begin{remark}\label{Rmk: Global Limiting Measure}
    For any $c \in \mc{C}(n-m)$, results like Theorem~\ref{Main Results: Global 1} and Theorem~\ref{Main Results: Global 2} also hold true if one replaces `connected component counting' with `connected component with type $c$ counting'.
    In particular, for a closed manifold $X$, one can counstruct an empirical isotopy class counting measure on $\mc{C}(n-m)$ like~\cite{SW19}. More precisely,
    given $N(X;F_L)$ and $N(X;F_L, c)$ (number of connected components of type $c$) as in Theorem~\ref{Main Results: Global 2} and its analogies, we define
		\begin{equation}
			\mu_{L} \equiv \frac{1}{N(X; F_L)} \cdot \sum_{c \in \mc{C}(n-m)} N(X; F_L,c) \cdot \delta_c,
		\end{equation}
	which is a random probability measure on $\mc{C}(n-m)$. Then, we can also obtain a higher codimension generalization of Theorem 1.1 of~\cite{SW19}. 
    That is, when $n_{\infty}(X) >0$, there is a limiting probability measure (in discrepancy sense) on $\mc{C}(n-m)$ defined by
		\begin{equation}
			\mu_{\infty} \equiv \frac{1}{n_{\infty}(X)} \cdot \sum_{c \in \mc{C}(n-m)} n_{\infty,H}(X) \cdot \delta_H,
		\end{equation}
	such that for any $\epsilon >0$,
		\begin{equation}
			\lim_{L \to \infty} \mathcal{P}\big( \sup_{A \subset \mc{C}(n-m)} |\mu_L(A) - \mu_\infty(A)| > \epsilon\big) = 0.
		\end{equation}
    We omit the proof of this last result from this paper. The proof is not a direct corollary of~\cite{SW19}, while it is fairly straightforward once one has the tools developed in this paper. 
\end{remark}

Our final result is on asymptotic laws of Betti numbers.
For each $l = 0, \dots,n-m$, we use $\beta_l(F_L)$ to denote the summation of $l$-th Betti numbers of all connected components of $Z(F_L)$ in the given closed $n$-dimensional manifold $X$.

\begin{theorem}\label{Main Results: Global Betti}
    \textit{
        Assume that ${\{F_L\}}_{L \in \mathcal{L}}$ is tame on $X$ and $X$ is a closed manifold. Then for each $l = 0, \dots, n-m$, there is a nonnegative measurable and bounded function $\bar{\nu}_l(x)$ on $X$, and a positive constant $C_l$ depending on ${\{F_L{\}}}_{L \in \mathcal{L}}$, such that
        \begin{equation}
            \int_{X} \bar{\nu}_l(x) \ d x \leq \liminf_{L \to \infty}\frac{\mathcal{E}(\beta_l(F_L))}{L^n} \leq \limsup_{L \to \infty}\frac{\mathcal{E}(\beta_l(F_L))}{L^n} \leq C_l \cdot |X|,
        \end{equation}
    where $|X|$ is the volume of $X$.
    }
\end{theorem}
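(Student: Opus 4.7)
The plan is to prove the lower and upper bounds of the theorem separately, since they have genuinely different character. The lower bound is a local-to-global argument that descends from the stationary result Theorem~\ref{Main Results: Local Betti} via the scaling limits built into the tameness hypothesis. The upper bound comes from a classical Morse-theoretic bound on Betti numbers combined with a Kac-Rice expected count of critical points.

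For the lower bound, at each $x \in X$ tameness produces a scaling limit $F_x$: a $C^{3-}$-smooth centered stationary Gaussian field on $\mathbb{R}^n$ with non-degenerate $(F_x(0), \nabla F_x(0))$. Applying Theorem~\ref{Main Results: Local Betti}(1) to $F_x$ yields a density $\nu_{l; F_x} \geq 0$, and I would set $\bar{\nu}_l(x) \equiv \nu_{l; F_x}$; measurability and local boundedness follow from the Kac-Rice representation of $\nu_{l; F_x}$ together with the continuous dependence of the rescaled covariance on $x$. To globalize, I would partition $X$ (up to measure zero) into small geodesic balls of radius $r/L$, rescale each by $L$ so that it becomes approximately the cube $C_r \subset \mathbb{R}^n$, and compare the rescaled $F_L$ with $F_x$. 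Every closed component of $Z(F_x)$ fully contained in $C_{r-1}$ is non-degenerate, hence persists under the resulting $C^2$-small perturbation with the same $l$-th Betti number, and corresponds to a closed component of $Z(F_L)$ fully contained in the rescaled ball. Summing over balls, dividing by $L^n$, and sending $L \to \infty$ followed by $r \to \infty$ via Fatou produces $\liminf L^{-n}\mathcal{E}(\beta_l(F_L)) \geq \int_X \bar{\nu}_l(x)\,dx$.

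For the upper bound, fix a Morse function $h: X \to \mathbb{R}$. Almost surely (Lemma~\ref{L: Quantitative Bulinskaya}) $Z(F_L)$ is a smooth submanifold and $h|_{Z(F_L)}$ is Morse, so the Morse inequality applied componentwise yields $\beta_l(F_L) \leq N_l(F_L)$, where $N_l(F_L)$ counts critical points of $h|_{Z(F_L)}$ of Morse index $l$. Such a critical point satisfies the $n$ scalar equations $F_L(x) = 0$ together with $\nabla h(x) \in \mathrm{span}(\nabla f_1(x),\dots,\nabla f_m(x))$. The Kac-Rice formula represents $\mathcal{E}(N_l(F_L))$ as an integral $\int_X \rho_L^{(l)}(x)\,dx$, and the uniform $C^3$ control of the covariance provided by tameness gives $\rho_L^{(l)}(x) \leq C_l L^n$ with $C_l$ independent of $x \in X$, yielding $\limsup L^{-n}\mathcal{E}(\beta_l(F_L)) \leq C_l \cdot |X|$.

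The principal technical obstacle lies in the upper-bound Kac-Rice computation: one must verify uniform non-degeneracy of $(F_L(x), \nabla F_L(x))$ so that Kac-Rice applies, extract the correct $L^n$ leading scaling for a density defined by doubly-conditioned constraints, and restrict to critical points of Morse index exactly $l$, which requires tracking the signature of a Hessian-type determinant conditional on the zero-gradient event. Running a single Kac-Rice formula for the vector-valued map $x \mapsto (F_L(x), \pi_{N_x Z(F_L)} \nabla h(x))$, rather than iterating two Kac-Rice computations, is the cleanest route. The lower bound's component-persistence step is milder and is a standard transversality / $C^2$-openness argument once the scaling convergence from tameness is in place.
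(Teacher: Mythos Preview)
Your lower-bound argument is close in spirit to the paper's: both define $\bar\nu_l(x)=\nu_{l;F_x}$ via Theorem~\ref{Main Results: Local Betti}, both localize and use the one-sided ``sandwich'' $\beta_l(F_L)\ge\sum\beta_l(\text{small ball};F_L)$, and both pass to the limit. The paper carries this out through the integral sandwich Lemma~\ref{L: Integral Sandwich Betti} on charts and upgrades the convergence in probability of Lemma~\ref{L: Double Limit in Probability} to $L^1$ via the $L^q$ bound (Theorem~\ref{Thm: L^q Bound on Betti}), then uses dominated convergence; your Fatou route can in fact bypass the $L^q$ bound for this direction, which is a minor simplification. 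One correction: $\nu_{l;F_x}$ is defined as a limit, not by a closed Kac--Rice formula, so your measurability argument does not work as stated; the paper obtains it instead from the $L^1$ double-limit~(\ref{E: Double Limit in L^1 for Betti}) and Fubini.

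Your upper bound takes a \emph{genuinely different} route. The paper embeds $X\hookrightarrow\mb R^{n+q}$ via Nash, applies the Chern--Lashof inequality (Theorem~\ref{Thm: Chern}) to bound $\sum_l\beta_l$ by an integral of $\|\II^{Z(F_L)}\|^{n-m}$ over $Z(F_L)$, and then estimates that curvature integral by Kac--Rice exactly as in Theorem~\ref{Thm: L^1 Bound on Betti}; the rescaling produces the $L^n$ factor and the embedding-dependent constant $C(n,m,X)$ drops out in the limit. Your Morse-theoretic approach (bound $\beta_l$ by the number of critical points of a fixed $h|_{Z(F_L)}$ and count those by Kac--Rice) is the Gayet--Welschinger strategy and is also valid; it trades a curvature computation for a critical-point count. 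The paper's route has the practical advantage of reusing verbatim the machinery already built in Section~\ref{Section: L^1} and Appendix~\ref{APP: Computation}. Your route requires some care you have not yet supplied: the map $x\mapsto(F_L(x),\pi_{N_xZ(F_L)}\nabla h(x))$ does \emph{not} vanish at critical points (the normal projection equals $\nabla h$ there, not zero), so you want the tangential projection instead; and that tangential projection lives in the varying bundle $TZ(F_L)$, so you must either work on $X\times\mb R^m$ via Lagrange multipliers or run Kac--Rice intrinsically on $Z(F_L)$ as in Theorem~\ref{Thm: Kac-Rice}. Neither is hard, but it is additional setup the paper avoids.
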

The constants $C_l$ in Theorem~\ref{Main Results: Global Betti} depend on the parameters in our Definition~\ref{D: Axiom 2}.
For Theorem~\ref{Main Results: Global Betti}, we do not know whether an actual limit exists for each $l = 1, \dots, n-m-1$. 
This is because of the existence of giant connected components of $Z(F_L)$, which can appear in many Gaussian ensemble examples. 
One may see relevant discussion on page 6 of~\cite{W21} and references therein.

We also believe that some of these results can be extended to general level sets, i.e., $F^{-1}(z)$ with $z \in \mathbb{R}^m$. But for simplicity, we focus on zero sets in this paper.

This introduction has thus far summarized our main results. 
Concerning our methods: very roughly speaking, they combine techniques from the literature on zero sets of random scalar-valued functions with tools from differential geometry. 
In the statistics of random zero sets, a widely-used tool is the Kac-Rice theorem, see our Theorem~\ref{Thm: Kac-Rice} for example.
The Kac-Rice theorem computes the moments of `integral random variables' of the form
    \begin{equation}
        \int_{Z(F) \cap A} h(x, W(x)) \ d \mc{H}^{n-m}(x),
    \end{equation}
where $A \subset \mb{R}^n$ is a Borel set, both $F: \mb{R}^n \to \mb{R}^m$ and $W$ are random fields, $h$ is a function with enough regularity, and $\mc{H}^{n-m}$ is the $(n-m)$-dimensional Hausdorff measure (the volume measure on the embedded submanifolds).
On the other hand, statistics like the number, Betti numbers, or isotopy classes, are related to some geometric integrals on embedded submanifolds (see our Theorem~\ref{Thm: Fenchel} and Theorem~\ref{Thm: Chern}, for example).
Now the idea is to choose suitable $h$'s with enough regularity which are compatible with those geometric integrals. 
However, the $h$'s involved in the geometric integrals always have highly singular parts. 
For example, in the simplest case when $m=1$ so that $F$ is a scalar-valued function, Theorem~\ref{Thm: Fenchel} suggests to choose $h$ as the mean curvature $H$ on $Z(F)$ and $W = (\nabla F , \nabla ^2 F)$, where $H$ has the following formula,
    \begin{equation}
        H = \frac{\Delta F}{||\nabla F||} - \frac{\nabla^2 F(\nabla F, \nabla F)}{{||\nabla F||}^3}.
    \end{equation}
Here, $\nabla F$ is the standard gradient in $\mb{R}^n$, $\nabla^2 F$ is the standard Hessian, and $\Delta_{\mb{R}^n}F$ is the standard Laplacian (see Example 1.1.3 in~\cite{mantegazza2011lecture}). 
When $m >1$, the curvature computations involved in Theorem~\ref{Thm: Fenchel} and Theorem~\ref{Thm: Chern} become more complicated, as shown in our Appendix~\ref{APP: Computation}.
We will explain in detail how we estimate these curvature involved geometric integrals in Section~\ref{Section: L^1}.

Now, we can close this introduction with a summary of the paper's organization.

In Section~\ref{Section: L^1}, we will show $L^1$-estimates for the number and Betti numbers of connected components fully contained in $C_R$, i.e., upper bounds for $\mathcal{E}(N(R;F)) / R^n$ and $\mathcal{E}(\beta_l(R;F)) / R^n$ independent of $R$, which are our Theorem~\ref{Thm: L^1 Bound on Number} and Theorem~\ref{Thm: L^1 Bound on Betti}. 
We also prove a quantitative version of Bulinskaya's Lemma~\ref{L: Quantitative Bulinskaya}.
This part contains core ideas of this paper, so we suggest reading it first before reading other sections.

In Section~\ref{Section: Stationary Random Field}, we will restate Theorem~\ref{Main Results: Local} as Theorem~\ref{Thm: Euclidean Random Field} and prove it. The proof for Theorem~\ref{Main Results: Local Betti} is similar, and we will mention necessary modifications at the end of Section~\ref{Section: Stationary Random Field}.
The ergodicity issue of translation actions and the positivity issue of $\nu_{F,c}$ related to Theorem~\ref{Main Results: Local} and Theorem~\ref{Thm: Euclidean Random Field} will be included in Appendix~\ref{APP: Ergodicity} and Appendix~\ref{APP: Positivity}.

In Section~\ref{Section: Local Double Limit and Global Limit}, we restate Theorem~\ref{Main Results: Global 1} as Theorem~\ref{Thm: Global Limit} and prove it, which also needs two technical lemmas.
One is our quantitative Bulinskaya's Lemma~\ref{L: Quantitative Bulinskaya} for random fields. Another is Lemma~\ref{L: Continuous Stability}, which is a stability lemma for connected components under small perturbations.
The proof of Theorem~\ref{Thm: Global Limit} also needs an upper bound for $\mathcal{E}({(N(R;F))}^q) / R^{nq}$ independent of $R$ with some $q = q(n,m) > 1$, which is our Theorem~\ref{Thm: L^q Bound on Number}.
The proof of Theorem~\ref{Main Results: Global 2} then follows from Theorem 3 in~\cite{NS16} once Theorem~\ref{Main Results: Global 1} is completed.
Finally, we restate Theorem~\ref{Main Results: Global Betti} as Theorem~\ref{Thm: Global Betti} and prove it.

In all of these proofs, we will use $|\cdot|$ or $||\cdot||$ to denote norms of vectors or volumes of a set when there is no ambiguity. We only use $||\cdot||_{C^{1+\beta}}$ or $||\cdot||_{C^{2}}$, i.e., $||\cdot||_{\cdot}$ with subindices, to denote norms related to smoothness.
We will also use the notations $v=(v_i)$ (or $A=(A_{ij})$) to denote vectors (or matrices) with elements $v_i$ (or $A_{ij}$). When we do computations, we will use, for example, $C(n,m)$, to denote constants depending on $n,m$, but they are not necessarily the same from line to line.

%

\section{Random Fields on Cubes}\label{Section: L^1}

We denote $C_R  \equiv {(-R,R)}^n $ as open cubes with side lengths $2R$  for $R>0$ in  $\mb{R}^n$. Let $F : C_{R+1} \to \mb{R}^{m}$, $m\in(0,n)$, be a continuous Gaussian random field, and denote that $F(x) = (f_1(x), \dots, f_{m}(x))$. Then, the $(x,y)$-covariance kernel (two-point covariance kernel),
    \begin{equation}
        K_{i_1 i_2}(x,y) = \mc{E}\big(f_{i_1}(x)f_{i_2}(y)\big), \ i_1,i_2 = 1,2,\dots, m,
    \end{equation}
is an $m \times m$ matrix.

\begin{definition}\label{D: Axiom 1}
    We say that $F:C_{R+1} \to \mb{R}^m$ satisfies \textit{ $(R; M,k_1)$-assumptions} if on $C_{R+1}$,
    \begin{itemize}
	\item[$(A1)$] $F$ is centered, i.e., $\mc{E}(f_i(x)) = 0$ for $i = 1, \dots,  m$. And the joint distribution of $F(x)$ and its Jacobian $\nabla F (x)$, i.e., $(f_1(x), \dots, f_m(x), \pa_1 f_1(x) , \pa_2 f_1(x), \dots, \pa_n f_m(x) )$, is a non-degenerate Gaussian vector with a uniform lower bound for all $x \in C_{R+1}$, i.e.,
		\begin{equation}
			\mc{E}\bigg(\bigg| \sum_{i = 1}^m\eta_i \cdot f_i (x) + \sum_{i=1} ^m \sum_{j=1} ^n \xi_{ij} \cdot \pa_j f_i (x)\bigg|^2 \bigg) \geq k_1 >0,
		\end{equation} 
	for all $x \in C_{R+1}$, and for any $\eta = (\eta_i) \in \mb{R}^m$, $\xi = (\xi_{ij}) \in \mb{R}^{nm}$, with $||\eta||^2 + ||\xi||^2 = 1$.
        \item[$(A2)$] $F$ is almost surely $C^{3-} \equiv \cap_{0 < \beta < 1} C^{2+ \beta}$ in $C_{R+1}$:
                            \begin{equation}
                                \max_{0\leq |\alpha| \leq 3} \sup_{x \in C_{R+1}} | D_x ^{\alpha}D_y ^{\alpha}K_{i_1 i_2}(x,y) |_{y=x}| \leq M, \ i_1,i_2 = 1,2,\dots, m.
                            \end{equation}

        \noindent Furthermore, we say that $F$ satisfies \textit{ $(R; M,k_1, k_2)$-assumptions} if on $C_{R+1}$, $F$ also satisfies the following two-point nondegeneracy assumption $(A3)$ for some $k_2 >0$.
        \item[$(A3)$] For every pair of distinct points $x,y  \in C_{R}$, the Gaussian vector $(F(x),F(y))$ has a nondegenerate distribution with the following restrictions:
                            \begin{equation}\label{E: Non-Degenerate Joint Distribution}
                                p_{(F(x),F(y))}(0,0) \leq 
                                \begin{cases}
                                    \frac{k_2}{||x-y||^m} ,& \ \text{if } ||x-y|| \leq 1, \\
                                    & \\
                                    k_2 ,& \ \text{if } ||x-y|| >1,
                                \end{cases}
                            \end{equation}
                    where $p_{(F(x),F(y))}(0,0)$ is the density of the joint distribution of $(F(x), F(y))$ at $(0,0)$.

    \end{itemize}
\end{definition}

\begin{remark}
    The condition $(A3)$, and the later condition $(B4)$ in Definition~\ref{D: Axiom 3},  will only be used when we prove Theorem~\ref{Thm: L^q Bound on Number}, Theorem~\ref{Thm: Global Limit}, and Theorem~\ref{Thm: Global Betti} in Section~\ref{Section: Local Double Limit and Global Limit}.
    The upper bound $\frac{k_2}{||x-y||^m}$ is natural when $x$ and $y$ are close to each other, which actually follows from the condition $(A1)$. 
    In our Appendix~\ref{APP: 2 Point}, we will explain this and show that $(A1)$ and $(A2)$ would imply (\ref{E: Non-Degenerate Joint Distribution}) locally.
\end{remark}

Our first main result in Section~\ref{Section: L^1} is the following theorem.

\begin{theorem}\label{Thm: L^1 Bound on Number}
	\textit{
    Assume that $F : C_{R+1} \to \mb{R}^{m}$, satisfies \textit{ $(R; M,k_1)$-assumptions} on $C_{R+1}$. Then, there is a constant $D_1 = D_1(n,m, M,k_1) >0$, such that
        \begin{equation}
            \mc{E}(N(R;F)) \leq D_1 \cdot |C_R|.
        \end{equation}
	}
\end{theorem}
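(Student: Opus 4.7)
The plan is to upper-bound the expected number of components by a curvature integral over $Z(F)$ and then estimate the latter via Kac--Rice. The geometric input is a Chern--Lashof-type inequality (Theorem~\ref{Thm: Fenchel} for hypersurfaces, Theorem~\ref{Thm: Chern} in higher codimension, as previewed in the introduction): for every closed $(n-m)$-dimensional submanifold $\gamma \hookrightarrow \mathbb{R}^n$,
\begin{equation*}
    \int_\gamma |\II_\gamma|^{\,n-m}\, d\mathcal{H}^{n-m} \;\geq\; c_0(n,m)\;>\;0,
\end{equation*}
where $|\II_\gamma|$ is an appropriate norm of the second fundamental form. Lemma~\ref{L: Quantitative Bulinskaya} will guarantee that almost surely every component counted by $N(R;F)$ is a smooth closed $(n-m)$-submanifold of $C_R$, so summing this inequality over such components gives
\begin{equation*}
    c_0(n,m)\cdot N(R;F)\;\leq\; \int_{Z(F)\cap C_R}|\II|^{\,n-m}\,d\mathcal{H}^{n-m},
\end{equation*}
reducing the theorem to bounding the expectation on the right by $D_1\cdot|C_R|$.

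Next I would invoke Kac--Rice (Theorem~\ref{Thm: Kac-Rice}) with pointwise integrand $h(x,\nabla F(x),\nabla^2 F(x)) = |\II(x)|^{n-m}$, where $\II(x)$ is the rational function of $(\nabla F(x),\nabla^2 F(x))$ derived in Appendix~\ref{APP: Computation}. This produces
\begin{equation*}
    \mathcal{E}\!\left[\int_{Z(F)\cap C_R}\!|\II|^{n-m}\,d\mathcal{H}^{n-m}\right] \;=\; \int_{C_R}\!p_{F(x)}(0)\cdot\mathcal{E}\!\left[|\II(x)|^{n-m}\sqrt{\det(\nabla F\nabla F^T)}\,\Big|\,F(x)=0\right]dx,
\end{equation*}
where the Jacobian factor $J_F(x)=\sqrt{\det(\nabla F(x)\nabla F(x)^T)}$ is the standard Kac--Rice weight. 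Under $(A1)$ the density $p_{F(x)}(0)$ is uniformly bounded above and, conditional on $F(x)=0$, $\nabla F(x)$ remains a non-degenerate $m\times n$ Gaussian matrix whose covariance is uniformly bounded above and below in $x\in C_{R+1}$; under $(A2)$, $\nabla^2 F(x)$ has uniformly bounded Gaussian moments. It therefore suffices to show that the conditional expectation above is bounded by a constant depending only on $n,m,M,k_1$, uniformly in $x\in C_R$; integrating then yields the claimed linear bound in $|C_R|$.

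The main obstacle is the singularity analysis behind this uniform conditional bound. The norm $|\II(x)|$ is a rational function of $\nabla F$ whose denominator is a positive power of $\det(\nabla F\,\nabla F^T)$, so the combined factor $|\II|^{n-m}\sqrt{\det(\nabla F\nabla F^T)}$ is a priori singular where $\nabla F$ loses rank. The key quantitative observation is that the exponent $n-m$ in the Chern--Lashof inequality is calibrated precisely so that after multiplication by $J_F$ the surviving singular part is only a negative power $\sigma_m^{-\alpha}$ of the least singular value of $\nabla F$, with $\alpha$ strictly less than the Gaussian integrability threshold furnished by the standard small-ball estimates for real Gaussian $m\times n$ matrices under the uniform non-degeneracy supplied by $(A1)$. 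In the hypersurface case $m=1$ this cancellation can be read directly off the formula $H=\Delta F/|\nabla F|-\nabla^2 F(\nabla F,\nabla F)/|\nabla F|^3$ cited in the introduction, which gives an integrand dominated by $|\nabla^2 F|^{n-1}/|\nabla F|^{n-2}$, Gaussian-integrable since $n-2<n$. For general $m$, the corresponding cancellation will have to be extracted from the explicit second-fundamental-form computations of Appendix~\ref{APP: Computation} and combined with Cauchy--Schwarz to separate the bounded polynomial factors in $\nabla^2 F$ from the singular factor in $\nabla F$. Once this is done, the uniform conditional bound follows and the theorem is proved.
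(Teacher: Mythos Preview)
Your proposal is correct and follows essentially the same strategy as the paper: bound $N(R;F)$ by a curvature integral via the Fenchel--Chern--Lashof inequality, apply Kac--Rice with the curvature as integrand, and then check that the exponent $n-m$ is calibrated so that the singular factor (a negative power of $\det((\nabla F)^T\nabla F)$, or equivalently of the least singular value) remains integrable against the conditional Gaussian law of $\nabla F$. The paper makes this last step self-contained via an explicit spherical-coordinates computation (Lemma~\ref{L: Singular Integral}) rather than invoking off-the-shelf small-ball estimates, uses $\|\bH\|$ (Theorem~\ref{Thm: Fenchel}) rather than $|\II|$, separates the $\nabla F$ and $\nabla^2 F$ factors via Young's inequality rather than Cauchy--Schwarz, and handles the unboundedness of $h$ by truncation plus monotone convergence---but these are all minor technical variations on the route you outlined.
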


Here, $|C_R|$ is the volume of $C_R$. Notice that Theorem~\ref{Thm: L^1 Bound on Number} also holds true for any $r \in (0,R)$ with the the same $D_1$. 

We need two theorems before we prove Theorem~\ref{Thm: L^1 Bound on Number}. The first is the following Kac-Rice theorem, see for example Remark 10 in Section 2 of~\cite{CH20} and Theorem 6.10 in Chapter 6 of~\cite{AW09}. 
\begin{theorem}[Kac-Rice Theorem with Integrand]\label{Thm: Kac-Rice}
	\textit{
    Let $W: C_{R+1} \to \mb{R}^{n_1}$ be a continuous Gaussian random field such that $(F,W)$ is also a Gaussian field on $C_{R+1}$. Assume that $h = h(x,w): \mb{R}^n \times  \mb{R}^{n_1} \to \mb{R}$ is a positive continuous bounded function.
    Under some mild assumptions on $F$, the $s$-th moments of the integral random variable
        \begin{equation}
            \int_{ \{F = 0\} \cap B} h(u,W(u)) d\mc{H}^{n-m}(u) 
        \end{equation}
    has the following formula:
        \begin{equation}\label{E: Kac-Rice}
            \begin{split}
                & \quad \mc{E} \bigg[{\bigg(\int_{ \{F = 0\} \cap B} h(u,W(u)) d\mc{H}^{n-m}(u) \bigg)}^s \bigg] 
                \\  &= \int_{B^s} J_{s,F}(h;u_1 , \dots , u_s)  p_{(F(u_1), \dots, F(u_s))} (0) \ du_1 \dots du_{s},
            \end{split}
        \end{equation}
    for every Borel measurable set $B \subset C_{R+1}$, where
        \begin{equation}
            \begin{split}
                & \quad J_{s,F}(h;u_1, \dots , u_s) 
               \\ &= \mc{E} \bigg[ \prod_{t=1} ^{s} h(u_t, W(u_t)) \sqrt{\det[{(\nabla F (u_t))}^T (\nabla F(u_t) )]} \bigg \vert F(u_t) = 0, t = 1, \dots, s \bigg],
            \end{split}
        \end{equation}
    and $p_{(F(u_1), \dots ,F(u_s))}(0)$ is the density of ${(F(u_t))}_{t=1}^s$ at $(0, \dots,0)$, 
    ${(\nabla F )}^T (\nabla F)$ is the $m \times m$ matrix ${(\nabla f_{i_1} \cdot \nabla f_{i_2})}_{m \times m}$,
    and $\mc{H}^{n-m}$ is the $(n-m)$-dimensional Hausdorff measure, i.e., surface measure for submanifolds of $\mb{R}^n$.
	}
\end{theorem}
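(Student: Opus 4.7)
The plan is to prove the Kac--Rice moment formula via the classical ``smooth-approximation-of-the-Dirac-mass plus coarea formula'' strategy, as in~\cite{AW09}. I will derive the case $s=1$ first and then indicate how the same argument generalizes to arbitrary $s$.

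For $s=1$, choose a family of nonnegative mollifiers $\varphi_\epsilon: \mb{R}^m \to \mb{R}$ with $\mr{supp}(\varphi_\epsilon) \subset B(0,\epsilon)$ and $\int \varphi_\epsilon = 1$. The mild nondegeneracy assumptions on $F$ (in the setting of the paper, $(A1)$) ensure that $\nabla F$ is almost surely of full rank on $\{F=0\}$, so the zero set is almost surely a smooth $(n-m)$-dimensional submanifold. The coarea formula then yields, almost surely,
\begin{equation*}
\int_{B} \varphi_\epsilon(F(x)) \, h(x, W(x)) \sqrt{\det\big((\nabla F(x))^T \nabla F(x)\big)} \, dx = \int_{\mb{R}^m} \varphi_\epsilon(y) \bigg( \int_{F^{-1}(y) \cap B} h(u, W(u)) \, d\mc{H}^{n-m}(u) \bigg) dy.
\end{equation*}
As $\epsilon \to 0$, the right-hand side tends almost surely to $\int_{F^{-1}(0) \cap B} h(u, W(u)) \, d\mc{H}^{n-m}(u)$ by continuity of preimages in $y$ (implicit function theorem). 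Taking expectations, applying Fubini on the left, and then conditioning on $F(x)=y$ via the Gaussian regression formula yields the $s=1$ identity after sending $\epsilon \to 0$. For general $s$, the same strategy is applied to the product map $(u_1,\ldots,u_s) \mapsto (F(u_1),\ldots,F(u_s))$ from $(C_{R+1})^s$ to $(\mb{R}^m)^s$ using the mollifier $\prod_{t=1}^s \varphi_\epsilon(y_t)$; the coarea formula on $(\mb{R}^n)^s$ produces the product $\prod_t \sqrt{\det((\nabla F(u_t))^T \nabla F(u_t))}$ appearing in $J_{s,F}$, while Gaussian conditioning delivers both the conditional expectation and the joint density factor $p_{(F(u_1),\ldots,F(u_s))}(0)$.

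The main obstacle is justifying the interchange of the limit $\epsilon \to 0$ with expectation and with the integration over $B^s$. For any collection of pairwise distinct $u_1,\ldots,u_s$ the joint density $p_{(F(u_1),\ldots,F(u_s))}$ is continuous in $y$ (using $C^3$-regularity of $K$) and the conditional expectation is continuous in the conditioning value (Gaussian regression plus the nondegeneracy of $(F,\nabla F)$), so pointwise convergence is clear. The delicate point is producing a uniform dominating function. Off the diagonals of $B^s$ one uses boundedness of $h$ together with Gaussian moment bounds on $\nabla F$; near the diagonals the potential blow-up of $p_{(F(u_1),\ldots,F(u_s))}(0)$ is compensated by the vanishing of the conditional expectation of $\prod_t \sqrt{\det((\nabla F)^T \nabla F)(u_t)}$, via a local Taylor expansion that writes $F(u_j) - F(u_i) = (u_j - u_i) \cdot \nabla F(u_i) + O(|u_j-u_i|^2)$. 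This diagonal cancellation is the technical heart of the higher-moment Kac--Rice formula and is the step requiring the most care; it is the reason the statement in the paper includes only ``mild assumptions'' as a black box and the details are deferred to the referenced monographs.
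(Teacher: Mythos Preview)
The paper does not prove this theorem at all: it is stated as a known result with citations to Remark~10 in Section~2 of~\cite{CH20} and Theorem~6.10 of~\cite{AW09}, and is then used as a black box. Your proposal sketches the standard mollifier-plus-coarea argument from~\cite{AW09}, which is precisely what those references do, so there is nothing to compare against in the paper itself; your outline is the correct classical route and correctly isolates the diagonal cancellation as the delicate step for $s\ge 2$.
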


In Theorem~\ref{Thm: Kac-Rice}, for $s = 1$, we only need to assume that $F$ satisfies the \textit{ $(R; M,k_1)$-assumptions}, while for $s = 2$, we need to assume that $F$ satisfies the \textit{ $(R; M,k_1,k_2)$-assumptions}. 
Higher moments can also be computed using the same formula as (\ref{E: Kac-Rice}) with some natural added assumptions. See~\cite{AW09,CH20} again.

Our second theorem is to link the number of connected components to a curvature integral over $Z(F)$. 
Let me cite Theorem 3 in~\cite{C71}, and readers can see more references therein. We will give more intuition of geometric inequalities when we state Theorem~\ref{Thm: L^1 Bound on Betti} and Theorem~\ref{Thm: Chern} later.

\begin{theorem}[Fenchel-Borsuk-Willmore-Chern-Lashof Inequality]\label{Thm: Fenchel}
	\textit{
    Let $M$ be a connected compact $(n-m)$-dimensional submanifold of $\mb{R}^{n}$ without boundary. Then,
    \begin{equation}\label{E: Willmore}
        |\mb{S}^{n-m}| \leq \int_{M}\bigg|\bigg|\frac{\bH}{n-m}\bigg|\bigg|^{n-m} \ d\mc{H}^{n-m},
    \end{equation} 
    where $|\mb{S}^{n-m}|$ is the volume of $(n-m)$-dimensional unit sphere and $\bH$ is the mean curvature vector of $M \subset \mb{R}^n$.
    The equality in (\ref{E: Willmore}) holds if and only if when $n-m >1$, $M$ is embedded as a hypersphere in an $(n-m + 1)$-dimensional linear subspace of $\mb{R}^n$; when $n-m=1$, $M$ is embedded as a simple convex closed curve in a $2$-dimensional linear subspace of $\mb{R}^n$.
	}
\end{theorem}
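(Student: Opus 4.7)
The plan is to prove this Willmore--Chern--Lashof inequality by the classical Gauss-map argument on the unit normal bundle, combined with an AM--GM reduction and a fibre integral identity that converts $|\mb{S}^{n-1}|$ into $|\mb{S}^{n-m}|$. I would introduce the unit normal bundle
$$N^1 M := \bigl\{(x,\nu) : x \in M,\ \nu \in N_x M,\ |\nu| = 1\bigr\},$$
a smooth $(n-1)$-manifold fibred by $\mb{S}^{m-1}$ over $M$, together with the spherical Gauss map $\Psi : N^1 M \to \mb{S}^{n-1}$, $(x,\nu) \mapsto \nu$. A direct computation in an adapted frame shows that the Jacobian of $\Psi$ at $(x,\nu)$ equals $|\det A_\nu|$, where $A_\nu : T_x M \to T_x M$ is the shape operator in the direction $\nu$, characterised by $\langle A_\nu X, Y\rangle = \langle \mathrm{II}(X,Y), \nu\rangle$.

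The first main step is to show that the restriction of $\Psi$ to the subset $\Sigma^+ := \{(x,\nu) \in N^1 M : A_\nu \geq 0\}$ still surjects onto $\mb{S}^{n-1}$. Given $v \in \mb{S}^{n-1}$, compactness lets the height function $h_v(x) := \langle x,v\rangle$ attain its minimum on $M$ at some $x_v$; at such a point $v \in N_{x_v} M$ and $\langle A_v\cdot,\cdot\rangle = \mathrm{Hess}^M h_v \geq 0$, hence $(x_v, v) \in \Sigma^+$ with $\Psi(x_v,v) = v$. The area formula then gives
$$|\mb{S}^{n-1}| \leq \int_{\Sigma^+} |\det A_\nu| \, d\mathrm{vol}_{N^1 M}.$$
On $\Sigma^+$, AM--GM applied to the non-negative eigenvalues of $A_\nu$ provides $|\det A_\nu| \leq \bigl(\mathrm{tr}(A_\nu)/(n-m)\bigr)^{n-m} = \bigl(\langle \bH(x), \nu\rangle_+ / (n-m)\bigr)^{n-m}$, since $\mathrm{tr}(A_\nu) = \langle \bH(x),\nu\rangle$ by the definition of the mean curvature vector.

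The second main step is Fubini along the $\mb{S}^{m-1}$ fibres. Factoring $|\bH(x)|/(n-m)$ out of the fibre integral yields
$$|\mb{S}^{n-1}| \leq \int_M \left(\frac{|\bH(x)|}{n-m}\right)^{n-m} \left(\int_{\mb{S}^{m-1}} \langle e, \nu\rangle_+^{n-m}\, d\nu\right) d\mc{H}^{n-m}(x),$$
for any unit $e \in \mb{R}^m$. A short beta-function computation using the duplication formula for $\Gamma$ gives the identity
$$\int_{\mb{S}^{m-1}} \langle e, \nu\rangle_+^{n-m}\, d\nu = \frac{|\mb{S}^{n-1}|}{|\mb{S}^{n-m}|},$$
and dividing through produces (\ref{E: Willmore}) exactly.

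For the equality case I would trace back each inequality: AM--GM forces $A_\nu = \lambda(x,\nu)\cdot \mathrm{id}$ for every $\nu$ in the open cone $\{\langle \bH(x), \nu\rangle > 0\}$; $|\bH|$ must be constant; and the Gauss map on $\Sigma^+$ must have degree one. When $n - m \geq 2$ these umbilicity conditions, propagated via the Codazzi equations, compel $M$ to sit inside an $(n - m + 1)$-dimensional affine subspace as a round hypersphere; when $n - m = 1$ one recovers Fenchel's theorem characterising simple convex closed planar curves. The main obstacle I anticipate is the jump from codimension one---where the Gauss map lives on $M$ itself---to higher codimension: the explicit sphere integral in Step~2 is the device that absorbs the extra "spread" across the normal directions, and the equality analysis requires some care to upgrade umbilicity from the $\nu$'s realised as height-function extrema to all of $\{\langle \bH,\nu\rangle > 0\}$.
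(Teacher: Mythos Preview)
The paper does not actually prove this theorem; it is stated with the sentence ``Let me cite Theorem~3 in~\cite{C71}, and readers can see more references therein,'' and then used as a black box in the proof of Theorem~\ref{Thm: L^1 Bound on Number}. So there is no paper proof to compare against.

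Your argument is the classical Chern--Lashof Gauss-map approach (cf.~\cite{CL57,CL58}, which the paper also cites for the related Theorem~\ref{Thm: Chern}), sharpened by AM--GM to replace the Lipschitz--Killing curvature $|\det A_\nu|$ with the mean-curvature power. The main steps are correct: the height-function argument gives surjectivity of $\Psi|_{\Sigma^+}$, the Jacobian identity $|J\Psi| = |\det A_\nu|$ is standard, AM--GM on the nonnegative eigenvalues of $A_\nu$ yields $|\det A_\nu| \le \bigl(\langle \bH,\nu\rangle/(n-m)\bigr)^{n-m}$ on $\Sigma^+$, and your fibre identity $\int_{\mb{S}^{m-1}} \langle e,\nu\rangle_+^{n-m}\,d\nu = |\mb{S}^{n-1}|/|\mb{S}^{n-m}|$ checks out via the beta integral and the formula $|\mb{S}^{k-1}| = 2\pi^{k/2}/\Gamma(k/2)$. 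One small point: between the AM--GM step and the Fubini step you silently enlarge the domain of integration from $\Sigma^+$ to all of $N^1M$; this is harmless since the integrand is nonnegative, but it should be said explicitly. The equality discussion you outline is the right shape, though carrying the Codazzi argument through to force $M$ into an affine $(n-m+1)$-plane is where most of the work lies; Chen's paper~\cite{C71} handles this carefully.
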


If we already know that all components of $Z(F)$ contained in $C_R$ are regular submanifolds, we can then apply this Theorem~\ref{Thm: Fenchel}, and see that there is a positive constant $C = C(m,n)$ such that
    \begin{equation}\label{E: Bound Components Number}
        \begin{split}
            N(R;F) &\leq C \cdot  \sum_{M \subset F^{-1}(0) \cap C_R, \text{disjoint} } \int_{M} ||\bH||^{n-m} \ d\mc{H}^{n-m} 
            \\     &\leq C \cdot \int_{F^{-1}(0) \cap C_R} ||\bH||^{n-m} \ d\mc{H}^{n-m} .
        \end{split}
    \end{equation}
Then, we can use Theorem~\ref{Thm: Kac-Rice} to compute the expectation of the mean curvature integral. 
Indeed, the regularity follows from Bulinskaya's lemma, see for example, Proposition 6.12 of~\cite{AW09}.
We will also prove a quantitative version of Bulinskaya's lemma in our Lemma~\ref{L: Quantitative Bulinskaya}, which was proved when $m=1$ in Lemma 7 in~\cite{NS16}. Now, we can use (\ref{E: Bound Components Number}) to compute our expectations and prove the Theorem~\ref{Thm: L^1 Bound on Number}.

\begin{proof}[Proof of Theorem~\ref{Thm: L^1 Bound on Number}]
    The general computations for $||\bH||$ are included in Appendix~\ref{APP: Computation} and Theorem~\ref{Thm: APP Curvature}. Let us recall the results and rewrite them to adapt to our current settings.
    We can write $||\bH||$ by terms and derivatives of $F$. If we set $\varphi: M \to \mb{R}^n$ as an embedding of a regular connected component $M$ of $Z(F)$,  then we define $ (g_{ij} ) \equiv (\pa_i \varphi \cdot \pa_j \varphi)$, $(g^{ij}) \equiv {(g_{ij} )}^{-1}$, $A = (A_{\alpha \beta}) \equiv (\nabla f_\alpha \cdot \nabla f_\beta) = \big({(\nabla F)}^T (\nabla F)\big)$, and $A^{\alpha \beta} \equiv {(A^{-1})}_{\alpha \beta}$,
    where $v_1 \cdot v_2$ is the standard inner product for two vectors $v_1, v_2 \in \mb{R}^n$.
    Then, we have the following inequalities ($||\cdot||$ are all vectors norms or matrices norms),
        \begin{equation}\label{E: Bound Mean Curvature}
            \begin{split}
                ||\bH||^2 &= \sum_{1 \leq \alpha ,\beta \leq m} \sum_{1 \leq i,j,k,l \leq n-m} \ (\nabla^2 f_\alpha) (\pa_i \varphi , \pa_j \varphi) g^{ij} A^{\alpha \beta} (\nabla^2 f_\beta) (\pa_k \varphi , \pa_l \varphi) g^{kl}
                \\      &\leq C(n,m)\cdot||A^{-1}|| \cdot ||\nabla^2 F || ^2 
                \\      & = C(n,m)\cdot |\det A |^{-1} \cdot ||\mr{adj} A|| \cdot ||\nabla^2 F ||^2
                \\      &\leq C(n,m)\cdot |\det A |^{-1} \cdot ||\nabla F||^{2(m-1)} \cdot ||\nabla^2 F ||^2 ,
            \end{split}
        \end{equation}
    where $\mr{adj} A$ is the adjugate matrix associated with $A$. Here, $C(n,m)$ are different positive constants but they only depend on $n,m$. 
    
    According to Theorem~\ref{Thm: Kac-Rice}, we let
        \begin{equation}
            W = (\nabla F, \nabla^2 F),
        \end{equation}
    which satisfies that $(F(u),W(u))$ is Gaussian for every $u \in C_{R+1}$. According to (\ref{E: Bound Components Number}) and (\ref{E: Bound Mean Curvature}), we set
        \begin{equation}
            h(x,W) \equiv h(W) \equiv {(C(n,m) )}^{\frac{(n-m)}{2}}\cdot \big|\det A \big|^{-\frac{(n-m)}{2}} \cdot ||\nabla F||^{(n-m)(m-1)} \cdot ||\nabla^2 F||^{(n-m)}.
        \end{equation}
    Here, we regard $W$ as a vector with elements $(\nabla F, \nabla^2 F)$ and we regard $h$ as a function of this vector $W$ and independent of $x$. 
    To be consistent with the boundedness assumption in our Theorem~\ref{Thm: Kac-Rice} (see also Remark 10 in Section 2 of~\cite{CH20} and Theorem 6.10 in~\cite{AW09}), we need to modify $h$ to make it bounded.
    We denote the closed singular set of $h$ as $\mc{S}$, which is of Hausdorff codimension $1$ in the vector space of $W$ because $\det A$ is a polynomial with elements in $\nabla F$.
    So, we can construct a monotone family of positive continuous cut-off functions $\varphi_Q(W)$, such that $\varphi_Q \to 1$ as $Q \to \infty$, and $\varphi_{Q_1}(W) < \varphi_{Q_2}(W)$ when $Q_1 < Q_2$, and $\varphi_Q(W) = 0$ when $\dist(W , \mc{S}) < 1/Q$ or $||W|| > Q$.
    We then set
        \begin{equation}
            h_Q(x, W) \equiv h_Q(W) \equiv \varphi_Q(W) h(W), \quad Q>0,
        \end{equation}
    which is a monotone family of positive continuous bounded functions.
    
    We can then see that, by monotone convergence theorem,
        \begin{equation}\label{E: Kac-Rice in Curvature}
            \begin{split}
                &\mc{E} \bigg(\int_{F^{-1}(0) \cap C_R}  (||\bH (u) ||^{(n-m)})   \ d\mc{H}^{n-m}(u) \bigg ) \leq \mc{E} \bigg( \int_{F^{-1}(0) \cap C_R} h(u,W(u))\ d\mc{H}^{n-m}(u) \bigg)
                \\      &= \mc{E} \bigg( \lim_{Q \to \infty} \int_{F^{-1}(0) \cap C_R} h_Q(u,W(u))\ d\mc{H}^{n-m}(u) \bigg)
                \\      &=  \lim_{Q \to \infty} \mc{E} \bigg( \int_{F^{-1}(0) \cap C_R} h_Q(u,W(u))\ d\mc{H}^{n-m}(u) \bigg)
                \\      &=   \lim_{Q \to \infty} \int_{C_R} J_{1,F}(h_Q;u ) \cdot  p_{F(u)} (0) \ du 
                \\      &=  \lim_{Q \to \infty} \int_{C_R} \mc{E} \big[h_Q(u,W(u)) \cdot | \det A |^{1/2}  \ \big| \ F(u) = 0\big] \cdot p_{F(u)} (0) \ du
                \\      &=   \int_{C_R} \mc{E} \big[h(u,W(u)) \cdot | \det A |^{1/2}  \ \big| \ F(u) = 0\big] \cdot p_{F(u)} (0) \ du .
            \end{split}
        \end{equation}
    Since $F$ satisfies $(A1)$ in Definition~\ref{D: Axiom 1}, $p_{F(u)}(0) \leq C(n,m,k_1)$ for any $u \in C_R$. To estimate the conditional expectation, we first notice that there are terms including $\nabla F$ and $\nabla^2 F$, which we are going to separate
    using the inequality $ab \leq \frac{k}{k+1} a^{\frac{k}{k+1}} + \frac{1}{k+1} b^{k+1} \leq a^{\frac{k}{k+1}} + b^{k+1}$ for some positive integer $k = k(m,n)$ to be determined later. The conditional expectation term in (\ref{E: Kac-Rice in Curvature}) then becomes
        \begin{equation}
            \begin{split}
                & \ \quad \mc{E}\big[| \det A (u)|^{\frac{1-(n-m) }{2}} \cdot ||\nabla F(u) ||^{(n-m)(m-1)} \cdot ||\nabla^2 F(u) ||^{(n-m)}  \ \big| \ F(u) = 0\big]
                \\  &\leq \mc{E}\bigg[ {\bigg(| \det A (u)|^{\frac{1-(n-m) }{2}} \cdot ||\nabla F(u) ||^{(n-m)(m-1)} \bigg)}^{\frac{k+1}{k}} + ||\nabla^2 F(u) ||^{(n-m)(k+1)}  \ \bigg| \ F(u) = 0\bigg ].
            \end{split}
        \end{equation}
    
    We now need to estimate
        \begin{equation}\label{E: Determinant Integral in Curvature}
            \mc{E}\bigg[ {\bigg(| \det A (u)|^{\frac{1-(n-m)}{2}} \cdot ||\nabla F(u) ||^{(n-m)(m-1)} \bigg)}^{\frac{k+1}{k}} \ \bigg| \ F(u) = 0 \bigg ]
        \end{equation}
    and 
        \begin{equation}\label{E: Hessian Term in Curvature}
            \mc{E}\bigg[ ||\nabla^2 F(u) ||^{(n-m)(k+1)}  \ \bigg| \ F(u) = 0\bigg ]
        \end{equation}
    separately.

    We estimate (\ref{E: Hessian Term in Curvature}) first. For each $u \in C_{R}$, and each $\pa^2 _{j_1j_2} f_i (u)$, one needs to find $a^{t}_{j_1 j_2 , i}(u)$ such that for each $s = 1 , \dots m$,
        \begin{equation}\label{E: Independence for Conditional Expectation}
            \mc{E}\big[ \big(\pa^2 _{j_1j_2}f_i(u) + \sum_{t=1}^{m}a^t _{j_1j_2,i} f_t(u) \big) \cdot f_s(u)\big] = 0.
        \end{equation}
    Since $F$ satisfies $(A1)$ in Definition~\ref{D: Axiom 1}, the matrix ${(\mc{E}(f_t(u) \cdot f_s(u)))}_{m \times m}$ is always invertible and has uniformly lowerly bounded determinant value for all $u \in C_R$. Hence, those $a^t _{j_1j_2,i}$ are solvable and there is a constant $C = C(n,m,M,k_1)$ such that 
        \begin{equation}
            |a^t _{j_1j_2,i}| \leq C.
        \end{equation}
    So, there is another constant $C =C(n,m,M,k_1, k) = C(n,m,M,k_1)$ (since $k$ only depend on $n,m$ and will be determined later) such that 
        \begin{equation}
            \begin{split}
                & \quad \mc{E}\big[ |\pa^2 _{j_1j_2} f_i(u) |^{(n-m)(k+1)}  \ \big| \ F(u) = 0\big ]
                \\ & = \mc{E}\big[ |\pa^2 _{j_1j_2} f_i(u) + \sum_{t=1}^{m}a^t _{j_1j_2,i} f_t(u)|^{(n-m)(k+1)}  \big ] \leq C.
            \end{split}
        \end{equation}
    
    The estimate for (\ref{E: Determinant Integral in Curvature}) requires more ingredients. First, notice that since $(F,\nabla F)$ satisfies $(A1)$ in Definition~\ref{D: Axiom 1}. Then, for each $u \in C_{R}$ and each $\pa_j f_i(u)$, we need to find $a^t _{j,i}(u)$ such that for each $s = 1, \dots,m$,
	\begin{equation}
		\mc{E}\big[ \big(\pa _{j}f_i(u) + \sum_{t=1}^{m}a^t _{j,i}(u) f_t(u) \big) \cdot f_s(u)\big] = 0.
	\end{equation}
    For similar reasons in estimating those $a^t _{j_1 j_2,i}$ in (\ref{E: Independence for Conditional Expectation}), $a^t _{j,i}(u)$ are also solvable and there is a constant $C = C(n,m,M,k_1)$ such that
	\begin{equation}
		|a^t _{j,i}| \leq C.
	\end{equation}

    Next, we replace each $\pa_j f_i(u)$ in (\ref{E: Determinant Integral in Curvature}) with ${(\pa_j f_i)}^\# \equiv \pa _{j}f_i(u) + \sum_{t=1}^{m}a^t _{j,i}(u) f_t(u)$, and also, define ${(\nabla F (u))}^\#$ as replacing elements $\pa_j f_i$ in $\nabla F$ with ${(\pa_j f_i)}^\#$. Since the joint distribution of $(F,\nabla F)$ is non-degenerate, the Gaussian vector ${(\nabla F (u))}^\#$ is also non-degenerate.

    We can divide the non-degenerate $nm$-dimensional Gaussian vector ${(\nabla F (u))}^\#$ into $m$ $n$-dimensional vectors:
        \begin{equation}
            {(\nabla F (u))}^\# = (v_1(u) , v_2(u) ,\dots, v_m(u)),
        \end{equation}
    where for each $t = 1, \dots ,m$,
        \begin{equation}
            v_t(u) ={( {(\nabla f_t (u))}^\# )}^T= ({(\pa_1 f_t(u))}^\#, {(\pa_2 f_t(u))}^\# , \dots , {(\pa_n f_t(u))}^\# ).
        \end{equation}
    With these notations, we also set ${(A(u))}^\# = {(v_{t_1} \cdot v_{t_2})}_{m \times m}$, and we see that (\ref{E: Determinant Integral in Curvature}) equals to 
	\begin{equation}\label{E: Determinant Integral in Curvature, No Condition}
	    \mc{E}\bigg[ {\bigg({| \det {(A (u))}^\#|}^{\frac{1-(n-m)}{2}} \cdot {||{(\nabla F(u))}^\# ||}^{(n-m)(m-1)} \bigg)}^{\frac{k+1}{k}}  \bigg ].
	\end{equation}
    Notice that the determinant of the covariance kernel of ${(\nabla F(u))}^\#$ is lowerly bounded by a constant $C = C(n,m,M,k_1)$ as in $(A1)$ and $(A2)$ in Definition~\ref{D: Axiom 1}, the (\ref{E: Determinant Integral in Curvature, No Condition}) then reduces to estimating the following integral: 
        \begin{equation}\label{E: Determinant Integral Origin}
            \begin{split}
                C(n,m,M, k_1) \bigg(\int_{{(\mb{R}^n)}^{\otimes m}} & {|\det(v_{t_1} \cdot v_{t_2})|}^{\alpha} \cdot {|{|v_1|} ^2 + \cdots + {|v_m|}^2|}^\beta 
                \\ &\cdot e^{-c(n,m,M,k_1)(|v_1|^2 + \cdots |v_m|^2)}\ dv_1 \cdots d v_m \bigg).
            \end{split}
        \end{equation}
    Here, $\alpha = \frac{k+1}{k} \cdot \frac{1- (n-m) }{2}$, which is very close to but less than $- \frac{n-m-1}{2}$ since $k$ will be chosen to be large enough, and $\beta = \frac{k+1}{2k} \cdot (n-m)(m-1) $, and we use $|\cdot|$ to denote vector norms temporarily. 
    Also, $c(n,m,M,k_1)$ is a positive constant related to the covariance kernel for ${(\nabla F(u))}^\#$.
    We can simplify the integral in (\ref{E: Determinant Integral Origin}) further and then get
        \begin{equation}\label{E: Determinant Integral Spherical Decomposition}
            \begin{split}
                & \quad \bigg(\int_{{(\mb{S}^{n-1})}^{\otimes m}} |\det(w_{t_1} \cdot w_{t_2})| ^\alpha \ d\sigma(w_1) \cdots d\sigma(w_m) \bigg) 
                \\  &\cdot \bigg(\int_{{(\mb{R}_+)}^{\otimes m}} |r_1|^{2\alpha+n-1}  \cdots |r_m|^{2\alpha+n-1}||r_1| ^2 + \cdots + |r_m|^2|^\beta 
                \\  & \quad \cdot e^{-c(n,m,M,k_1)(|r_1|^2 + \cdots |r_m|^2)}\ dr_1 \cdots d r_m \bigg) ,
            \end{split}
        \end{equation}
    where we use spherical coordinates, and $\sigma(\cdot)$ is the surface measure on the unit sphere $\mb{S}^{n-1}$. Notice that $0 < 2\alpha + n-1 < n-1$ since $m \geq 1$ and $\alpha = - \frac{n-m-1}{2} - \epsilon$ for some small $\epsilon = \epsilon(m,n)>0$ to be determined later. Hence, the second multiplier is bounded by a constant $C = C(n,m,M,k_1)$.
    The rest is to consider the singular integral 
        \begin{equation}\label{E: Determinant Integral 1}
            \int_{{(\mb{S}^{n-1})}^{\otimes m}} |\det(w_{t_1} \cdot w_{t_2})| ^\alpha \ d\sigma(w_1) \cdots d\sigma(w_m)
        \end{equation}
    for some $\alpha <0$.

    Currently, our $\alpha$ is close to $-\frac{n-m-1}{2}$ but not less than $-\frac{n-m}{2}$. Indeed, we can prove a stronger statement and the sharp $\alpha $ is actually $-\frac{n-m+1}{2}$! Hence, one can just choose any $\epsilon = \epsilon(m,n) \in (0,1/2)$, so that $k$ is well-chosen in the previous settings and $k$ only depends on $m,n$. 
    This sharp value of $\alpha$ will be used when we prove the quantitative Bulinskaya's lemma, our Lemma~\ref{L: Quantitative Bulinskaya}.

    \begin{lemma}\label{L: Singular Integral}
	\textit{
        Let $n > m >0$. For any $s_0 \in (0,1)$ and $\alpha \in ( - \frac{n-m+1}{2} s_0 , 0 )$, there is a positive constant $C = C(s_0,n,m) $ such that 
        \begin{equation}\label{E: Determinant Integral in Lemma}
            \int_{{(\mb{S}^{n-1})}^{\otimes m}} |\det(w_{t_1} \cdot w_{t_2})| ^\alpha \ d\sigma(w_1) \cdots d\sigma(w_m) \leq C.
        \end{equation}
	}
    \end{lemma}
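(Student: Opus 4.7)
The plan is to exploit the classical Gram--Schmidt factorization of the determinant. For $w_1,\dots,w_m\in\mathbb{S}^{n-1}$, let $V_0=\{0\}$ and $V_{k-1}=\operatorname{span}(w_1,\dots,w_{k-1})$ for $k=2,\dots,m$. Since $m\leq n$, for almost every choice of $(w_1,\dots,w_m)$ the spaces $V_k$ have maximal dimension $k$, and off this measure-zero bad set one has the standard identity
\begin{equation}
\det(w_{t_1}\cdot w_{t_2})_{1\leq t_1,t_2\leq m}=\prod_{k=1}^{m}\operatorname{dist}(w_k,V_{k-1})^2.
\end{equation}
Thus the integrand factors iteratively, and by Fubini we reduce the problem to bounding, for each fixed $(w_1,\dots,w_{k-1})$, the single integral
\begin{equation}
I_k(V_{k-1}):=\int_{\mathbb{S}^{n-1}}\operatorname{dist}(w,V_{k-1})^{2\alpha}\,d\sigma(w),
\end{equation}
where $V_{k-1}$ is a linear subspace of dimension $d_{k-1}=k-1\leq m-1$.

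Next I would compute $I_k$ using the bi-spherical parametrization adapted to $V_{k-1}$: writing $w=\cos\theta\cdot e_1+\sin\theta\cdot e_2$ with $e_1\in V_{k-1}\cap\mathbb{S}^{d_{k-1}-1}$, $e_2\in V_{k-1}^{\perp}\cap\mathbb{S}^{n-d_{k-1}-1}$ and $\theta\in[0,\pi/2]$, the surface measure decomposes as
\begin{equation}
d\sigma(w)=\cos^{d_{k-1}-1}\theta\;\sin^{n-d_{k-1}-1}\theta\,d\theta\,d\sigma(e_1)\,d\sigma(e_2),
\end{equation}
and $\operatorname{dist}(w,V_{k-1})=\sin\theta$. (For $k=1$, $V_0=\{0\}$ and the integrand is simply $1$.) Consequently
\begin{equation}
I_k(V_{k-1})=|\mathbb{S}^{d_{k-1}-1}|\cdot|\mathbb{S}^{n-d_{k-1}-1}|\int_0^{\pi/2}\cos^{d_{k-1}-1}\theta\,\sin^{n-d_{k-1}-1+2\alpha}\theta\,d\theta.
\end{equation}
This Beta-type integral is finite precisely when $n-d_{k-1}-1+2\alpha>-1$, i.e.\ when $\alpha>-\tfrac{n-d_{k-1}}{2}$. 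Since $d_{k-1}\leq m-1$, the most restrictive bound occurs at $k=m$ and reads $\alpha>-\tfrac{n-m+1}{2}$. The hypothesis $\alpha>-\tfrac{n-m+1}{2}s_0$ with $s_0\in(0,1)$ provides a uniform gap $(1-s_0)\cdot\tfrac{n-m+1}{2}$, ensuring that all $m$ Beta integrals are bounded by a constant depending only on $s_0$, $n$, $m$; crucially, the bound on $I_k(V_{k-1})$ does \emph{not} depend on the particular subspace $V_{k-1}$, only on its dimension.

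Finally, I would assemble the pieces by iterated integration: integrating first over $w_m$, then $w_{m-1}$, and so on down to $w_1$, each step contributes a factor bounded by $\sup_{V}I_k(V)\leq C(s_0,n,m)$, yielding the desired uniform constant. The main (minor) obstacle is just the bookkeeping at the boundary cases $k=1$ (where $V_0=\{0\}$, so the spherical parametrization degenerates and one simply uses $\operatorname{dist}(w_1,V_0)=1$) and the verification that the exceptional set where some $V_k$ fails to have full dimension is $\sigma^{\otimes m}$-null; both are immediate, so the core difficulty is really the spherical computation above, which is routine once the Gram--Schmidt factorization of $\det G$ is identified.
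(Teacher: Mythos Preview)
Your proof is correct and rests on the same idea as the paper's: the Gram--Schmidt factorization $\det(w_{t_1}\cdot w_{t_2})=\prod_{k}\operatorname{dist}(w_k,V_{k-1})^2$, followed by reduction to Beta-type one-variable integrals whose sharp convergence threshold is $\alpha>-\tfrac{n-m+1}{2}$. The paper packages this as an induction on $m$ (fixing $w_1=e_1$, row-reducing the Gram matrix, and parametrizing $\mathbb{S}^{n-1}$ by a disc in $\mathbb{R}^{n-1}$), while you unroll the product directly and use a bi-spherical parametrization adapted to $V_{k-1}$; the organization differs slightly, but the content is the same.
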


    \begin{proof}[Proof of Lemma~\ref{L: Singular Integral}]
        If $m = 1$, then there is nothing to prove. For $m \geq 2$, we fix $\{e_1 , \dots, e_n\}$ as the standard orthonormal frame of $\mb{R}^n$. Notice that $| \det ( w_{t_1} \cdot w_{t_2}) | $ is invariant under actions of $O(n)$. Hence, the left hand side of (\ref{E: Determinant Integral in Lemma}) equals to 
        \begin{equation}\label{E: Determinant Integral 2}
            |\mb{S}^{n-1}| \cdot \int_{{(\mb{S}^{n-1})}^{\otimes (m-1)}} |\det(A(e_1))| ^\alpha \ d\sigma(w_2) \cdots d\sigma(w_m),
        \end{equation}
    where $A(e_1)$ has the form
        \begin{equation}
                A(e_1)=
                  \begin{bmatrix}
                    1 & e_1 \cdot w_2 & \cdots & e_1 \cdot w_m \\
                    w_2 \cdot e_1 & 1 &\cdots & w_2 \cdot w_m  \\
                    \vdots & \vdots & \cdots & \vdots \\
                    w_m \cdot e_1 & w_m \cdot w_2 & \cdots & 1
                  \end{bmatrix}     .
        \end{equation}
    By elementary row operations, we see that $A(e_1)$ has the same determinant value as the matrix 
        \begin{equation}
            A'(e_1)=
            \begin{bmatrix}
                1 & e_1 \cdot w_2 & \cdots & e_1 \cdot w_m \\
                0 &   & &   \\
                \vdots &  &{\bigg[w_{t_1} \cdot w_{t_2} - (e_1 \cdot w_{t_1})(e_1 \cdot w_{t_2})\bigg]} _{ ( m-1) \times (m-1)}&  \\
                0 &  &  & 
            \end{bmatrix}     .
        \end{equation}
    Then, we can parametrize $\mb{S}^{n-1}$ by $B^{n-1}$, the unit disc of dimension $n-1$. That is, for each $t = 2, \dots,m$, almost all $w_t \in \mb{S}^{n-1}$ can be written in coordinates of the form $(\sqrt{1- |w_t '|^2}, \ w_t ')$ or $(- \sqrt{1- |w_t'|^2}, \ w_t')$ for 
    $w_t ' = (w_t(2), \dots, w_t(n) ) \in B^{n-1}$. So, $w_{t_1} \cdot w_{t_2} - (e_1 \cdot w_{t_1})(e_1 \cdot w_{t_2})  = \langle w_{t_1} ' , w_{t_2} ' \rangle_{\mb{R}^{n-1} }$, where $\langle \cdot , \cdot \rangle_{\mb{R}^{n-1} }$ is the standard inner product in $\mb{R}^{n-1}$.
    By the area formula, we can rewrite (\ref{E: Determinant Integral 2}) as 
        \begin{equation}\label{E: Induction on Singular Integral}
            \begin{split}
                & \quad |\mb{S}^{n-1}| \cdot 2^{m-1 } \cdot \int_{{(B^{n-1})}^{\otimes (m-1)}} |\det(\langle w_{t_1} ' , w_{t_2} ' \rangle_{\mb{R}^{n-1}})| ^\alpha \cdot \prod_{t=2} ^m  \frac{1}{\sqrt{1- |w_t '|^2}}   \ dw_2 ' \cdots dw_m '
                \\  &= |\mb{S}^{n-1}| \cdot 2^{m-1 } \cdot \int_{{(\mb{S}^{n-2})}^{\otimes (m-1)}} |\det(\langle \tilde{w}_{t_1} , \tilde{w}_{t_2}  \rangle_{\mb{R}^{n-1}})| ^\alpha \ d\tilde{\sigma}{(\tilde{w}_2)}  \cdots d\tilde{\sigma}{(\tilde{w}_m)} 
                \\  &\qquad \cdot \int_{{(0,1)}^{\otimes (m-1)}} \prod_{t=2} ^m  \frac{r_t ^{2 \alpha + n -2}}{\sqrt{1- |r_t |^2}}   \ dr_2  \cdots dr_m .
            \end{split}
        \end{equation}
    Here $\tilde{\sigma}(\cdot)$ is the surface measure of $\mb{S}^{n-2}$. Notice that since $n > m \geq 2$, we have that $-(n-m+1) s_0 + n-2 \geq -(n-1)s_0 + n-2 = (n-1)(1-s_0) - 1 \geq 2(1-s_0) -1 = 1- 2s_0$. Hence,
        \begin{equation}
            \int_0 ^1 r^{2\alpha + n-2} \ dr \leq \int_0 ^1 r^{ -(n-m+1) s_0 + n-2} \ dr \leq \frac{1}{2(1-s_0)} \ .
        \end{equation}
    If $m=2$, we finish our proof because in (\ref{E: Induction on Singular Integral}), the determinant value in the integral is just $1$. If $m >2$, then we run the same process for the pair $n-2, m-1$ with the condition that $ n-2 \geq m -1 \geq 2$. 
    Continue doing this process, we can finish the proof for Lemma~\ref{L: Singular Integral}.
    
    \end{proof}

    With this Lemma~\ref{L: Singular Integral}, we can bound (\ref{E: Determinant Integral in Curvature}) by a constant $C = C(n,m,M,k_1)>0 $. Combine this constant with the constant $C = C(n,m,M,k_1)>0$ bounding (\ref{E: Hessian Term in Curvature}), we can use (\ref{E: Bound Components Number}) and finish our proof for Theorem~\ref{Thm: L^1 Bound on Number}.

\end{proof}

\begin{remark}\label{Rmk: Fary-Milnor}
    The sharp constant in Theorem~\ref{Thm: Fenchel} does not influence our proofs for Theorem~\ref{Thm: L^1 Bound on Number} as we only need an upper bound. On the other hand, one should not expect that we can calculate an accurate expectation for the number of connected components in such a way.
    For example, when $n=3$ and $m = 2$, $Z(F)$ consists of many closed curves, which we call knots. The F\'{a}ry-Milnor's theorem says that if a closed curve is not an unknot, then the integral on the right hand side of (\ref{E: Willmore}) is strictly greater than $4\pi$, while in this case, $|\mb{S}^1| = 2\pi$. 
    By our later Theorem~\ref{Thm: Euclidean Random Field} (also stated as Theorem~\ref{Main Results: Local} previously), all types of knots will appear with positive probabilities in limiting cases under some assumptions, which include the complex arithmetic random waves and Kostlan's ensemble.
    So, even an accurate calculation of curvature integrals will not tell us the accurate value of the limiting expectation, $\nu_F$ in Theorem~\ref{Thm: Euclidean Random Field}, of the number of connected components. 
\end{remark}

Next, for Betti numbers, we can build up similar upper bounds as in Theorem~\ref{Thm: L^1 Bound on Number}.
\begin{theorem}\label{Thm: L^1 Bound on Betti}
	\textit{
    Assume that $F : C_{R+1} \to \mb{R}^{m}$, satisfies \textit{ $(R; M,k_1)$-assumptions} on $C_{R+1}$. Then, there is a constant $\widetilde{D}_1 = \widetilde{D}_1(n,m, M,k_1) >0$ such that
        \begin{equation}
            \sum_{l=0}^{n-m} \mc{E}(\beta_l(R;F)) \leq \widetilde{D}_1 \cdot |C_R|.
        \end{equation}
	}
\end{theorem}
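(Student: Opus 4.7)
The strategy is to mirror the proof of Theorem~\ref{Thm: L^1 Bound on Number}, replacing the Fenchel–Borsuk–Willmore–Chern–Lashof inequality by the Chern–Lashof inequality (Theorem~\ref{Thm: Chern} in the introduction). Recall that Chern–Lashof bounds the total Betti number of a closed embedded submanifold $M^{n-m} \subset \mathbb{R}^n$ by
\begin{equation*}
    \sum_{l=0}^{n-m} \beta_l(M) \;\leq\; \frac{1}{|\mathbb{S}^{n-1}|}\int_M \int_{S(N_x M)} \bigl|\det A_\nu\bigr|\, d\sigma(\nu)\, d\mathcal{H}^{n-m}(x),
\end{equation*}
where $A_\nu$ is the shape operator of $M$ in the normal direction $\nu$, and $S(N_x M)$ is the unit sphere of the normal space $N_x M$. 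Since each principal curvature of $A_\nu$ is bounded in absolute value by $\|\II\|$, we have the pointwise bound $|\det A_\nu| \leq C(n,m)\,\|\II\|^{n-m}$ uniformly in $\nu$, and the unit normal sphere bundle has fiber-volume $|\mathbb{S}^{m-1}|$. Summing over the (countably many) closed components of $Z(F)$ fully contained in $C_R$ yields
\begin{equation*}
    \sum_{l=0}^{n-m} \beta_l(R;F) \;\leq\; C(n,m)\int_{Z(F)\cap C_R}\|\II\|^{n-m}\, d\mathcal{H}^{n-m}.
\end{equation*}

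The next step is to express $\|\II\|^{n-m}$ in terms of derivatives of $F$ using the computations in Appendix~\ref{APP: Computation}. Just as $\|\bH\|^{n-m}$ admits the bound in (\ref{E: Bound Mean Curvature}), the full second fundamental form $\II$ of a level set $Z(F)$ can be written schematically as $\II(V,W) = -(\nabla F)^{\dagger}\,\nabla^2 F(V,W)$ where $(\nabla F)^{\dagger}$ denotes the pseudo-inverse in the normal direction, giving a pointwise estimate of exactly the same shape, namely
\begin{equation*}
    \|\II\|^{n-m} \;\leq\; C(n,m)\,|\det A|^{-(n-m)/2}\,\|\nabla F\|^{(n-m)(m-1)}\,\|\nabla^2 F\|^{n-m},
\end{equation*}
with $A = (\nabla F)^T(\nabla F)$. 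From this point the argument is identical to the one for Theorem~\ref{Thm: L^1 Bound on Number}: one sets $W = (\nabla F, \nabla^2 F)$, takes $h$ proportional to this pointwise upper bound, approximates $h$ by monotone bounded truncations $h_Q$ vanishing near the singular set $\{\det A = 0\}$, applies Theorem~\ref{Thm: Kac-Rice} and passes to the limit $Q\to\infty$ by monotone convergence. Then one uses assumption $(A1)$ to bound $p_{F(u)}(0)$, Young's inequality to decouple the gradient and Hessian factors, and the non-degeneracy of $\nabla F$ modulo $F$ (via the same orthogonalization $(\partial_j f_i)^{\#}$ as in the previous proof) to replace the conditional expectation by an unconditional Gaussian integral. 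Lemma~\ref{L: Singular Integral} controls the resulting singular integral on the sphere, and the Hessian moments are bounded by $(A2)$.

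The main obstacle — and the only substantive new work — is verifying the pointwise bound on $\|\II\|^{n-m}$. This is essentially a bookkeeping exercise in the appendix computations: once one has the Gauss formula expressing $\II$ in terms of $\nabla^2 F$ pulled back to the tangent space of $Z(F)$ and $(\nabla F)^{T}(\nabla F)^{-1}$ identifying the normal directions, all factors arise with exactly the same exponents as in the mean curvature estimate, because $\II$ carries one factor of $\nabla^2 F$, one factor of $(\nabla F)^T$ for the normal projection (contributing $|\det A|^{-1/2}\|\operatorname{adj} A\|^{1/2}$ after inversion), and no additional derivatives. Given this, the values of $\alpha$ and $\beta$ in the singular integral (\ref{E: Determinant Integral Origin}) come out the same as in Theorem~\ref{Thm: L^1 Bound on Number}, Lemma~\ref{L: Singular Integral} applies verbatim with the same choice of integer $k = k(n,m)$, and the resulting constant $\widetilde{D}_1$ depends only on $n, m, M, k_1$.
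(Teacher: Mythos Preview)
Your proposal is correct and follows essentially the same approach as the paper: replace Theorem~\ref{Thm: Fenchel} by the Chern--Lashof inequality (Theorem~\ref{Thm: Chern}), use the pointwise bound $|\det(\langle \II,y\rangle)|\le C(n,m)\|\II\|^{n-m}$ together with the estimate $\|\II\|^2 \le C(n,m)|\det A|^{-1}\|\nabla F\|^{2(m-1)}\|\nabla^2 F\|^2$ from Theorem~\ref{Thm: APP Curvature}, and then rerun the Kac--Rice argument from Theorem~\ref{Thm: L^1 Bound on Number} with the same $h$, $h_Q$, and Lemma~\ref{L: Singular Integral}. The paper itself dispatches this theorem in exactly this way, noting that one ``can use the same $h$ and $h_Q$'' once (\ref{E: Bound Second Fundamental Form}) is in hand.
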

The proof is the same as Theorem~\ref{Thm: L^1 Bound on Number} once we replace Theorem~\ref{Thm: Fenchel} with the Chern-Lashof inequalities in~\cite{CL57,CL58}, which connects the topology of a closed manifold to its extrinsic curvatures.
On the other hand, there are some other works and techniques in differential geometry that lead to similar topological controls from the study of well-known vanishing theorems. See, for example,~\cite{B88} and references therein, where a De Giorgi-Nash-Moser iteration scheme also leads to a control on Betti numbers.
But the forms shown in~\cite{CL57, CL58} are cleaner and well adapted to our applications here. The author also proved them independently in earlier researches in differential geometry, but then learned that these things were studied well by very great geometers many years ago.
The following inequality is Theorem 1 in~\cite{CL58} but we rewrite it a little to be adapted to our notations.
\begin{theorem}[Chern-Lashof Inequality]\label{Thm: Chern}
	\textit{
    If $M$ is a connected closed $(n-m)$-dimensional submanifold of $\mb{R}^{n}$ without boundary, then,
    \begin{equation}\label{E: Chern-Lashof}
        \sum_{l=0}^{n-m} \beta_l(M) \leq \frac{1}{|\mb{S}^{n-1}|} \cdot \int_{M} \int_{T_x ^\perp M \cap \{|y| = 1\}} \big| \det \big(\langle \II(x) , y  \rangle_{\mb{R}^n}\big) \big| d\sigma^{m-1}(y)\ d\mc{H}^{n-m}(x).
    \end{equation} 
	}
\end{theorem}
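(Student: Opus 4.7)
The plan is to follow the classical Chern-Lashof argument, which reduces the claimed integral inequality to an averaged Morse-theoretic count of critical points of linear height functions on $M$. Let $\nu M \equiv \{(x,y) : x \in M,\ y \in T_x^\perp M,\ |y|=1\}$ denote the unit normal bundle, an $(n-1)$-dimensional smooth manifold, and let $G: \nu M \to \mb{S}^{n-1}$ be the generalized Gauss map $G(x,y) = y$ (viewed as a unit vector in $\mb{R}^n$ via the trivialization of the tangent bundle of Euclidean space). The first step is to compute the Jacobian of $G$ at $(x,y) \in \nu M$: decomposing the tangent space $T_{(x,y)}(\nu M) = T_x M \oplus T_y(T_x^\perp M \cap \mb{S}^{m-1})$, the differential $dG$ acts on tangential variations by the Weingarten map $-A_y : T_x M \to T_x M$ with $A_y(\cdot) = \langle \II(x)(\cdot,\cdot), y\rangle$, and isometrically on the normal-sphere factor. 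Thus $|\mr{Jac}\, G(x,y)| = |\det(\langle \II(x), y\rangle_{\mb{R}^n})|$.

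Next, I apply the area formula to $G : \nu M \to \mb{S}^{n-1}$:
\begin{equation*}
    \int_M \int_{T_x ^\perp M \cap \{|y| = 1\}} \big| \det \big(\langle \II(x) , y  \rangle_{\mb{R}^n}\big) \big|\, d\sigma^{m-1}(y)\, d\mc{H}^{n-m}(x) = \int_{\mb{S}^{n-1}} \#G^{-1}(y)\, d\sigma(y).
\end{equation*}
So it suffices to prove that $\#G^{-1}(y) \geq \sum_{l=0}^{n-m} \beta_l(M)$ for almost every $y \in \mb{S}^{n-1}$. For a fixed $y$, define the height function $h_y : M \to \mb{R}$ by $h_y(x) = \langle x, y\rangle$; then $x \in M$ is a critical point of $h_y$ iff $y \in T_x^\perp M$, i.e.\ $(x, \pm y) \in G^{-1}(\pm y)$. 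Since $M$ is compact, $h_y$ attains its maximum and minimum, so at least some critical points exist; more importantly, at a critical point $x$ the Hessian of $h_y$ is precisely $-\langle \II(x), y\rangle$, whose determinant coincides (up to sign) with the Jacobian of $G$ at $(x,y)$. Consequently, whenever $y$ is a regular value of $G$, the function $h_y$ is a Morse function on $M$, and its critical points correspond bijectively to $G^{-1}(y)$.

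Now I invoke Sard's theorem to guarantee that almost every $y \in \mb{S}^{n-1}$ is a regular value of $G$, and for such $y$ the weak Morse inequalities give
\begin{equation*}
    \#G^{-1}(y) = \#\mr{Crit}(h_y) = \sum_{l=0}^{n-m} \mu_l(h_y) \geq \sum_{l=0}^{n-m} \beta_l(M),
\end{equation*}
where $\mu_l(h_y)$ counts index-$l$ critical points. Integrating this pointwise inequality over $\mb{S}^{n-1}$ and dividing by $|\mb{S}^{n-1}|$ yields the claimed bound.

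The main obstacle I anticipate is the bookkeeping in step one: writing $dG$ in a frame adapted to the splitting $T_{(x,y)}(\nu M) = T_x M \oplus T_y(T_x^\perp M \cap \mb{S}^{m-1})$ and verifying that the off-diagonal blocks contribute nothing to $|\det dG|$ (they do, because the image of the second factor sits inside $T_y\mb{S}^{n-1}$ in a direction orthogonal to the image of the first, by the self-adjointness of the Weingarten map relative to the induced metric on $\nu M$). Once the Jacobian identity $|\mr{Jac}\, G| = |\det\langle \II, y\rangle|$ is established, the remainder of the argument is routine Morse theory plus the area formula, and no dimension-dependent subtleties arise beyond using that $\dim \nu M = \dim \mb{S}^{n-1} = n-1$.
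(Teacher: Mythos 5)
Your proposal is correct and reproduces the classical Chern--Lashof argument: pass to the unit normal bundle $\nu M$, take the generalized Gauss map $G(x,y)=y$, compute $|\mathrm{Jac}\,G|=|\det\langle\II(x),y\rangle|$, convert the curvature integral into $\int_{\mb{S}^{n-1}}\#G^{-1}(y)\,d\sigma(y)$ via the area formula, and then bound $\#G^{-1}(y)$ from below by $\sum_l\beta_l(M)$ using Sard's theorem and the weak Morse inequalities for the height function $h_y$. The paper does not reprove this statement but cites it directly as Theorem 1 of~\cite{CL58}, and your argument is precisely the proof given there; the only cosmetic point worth flagging is that the off-diagonal blocks of $dG$ vanish because the horizontal image lands in $T_xM$, the vertical image lands in $T_y(T_x^\perp M\cap\mb{S}^{n-1})$, and these two subspaces are orthogonal complements inside $T_y\mb{S}^{n-1}=y^\perp$ --- not because of any self-adjointness of the Weingarten map relative to a metric on $\nu M$.
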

In (\ref{E: Chern-Lashof}), $|\mb{S}^{n-1}|$ is the volume of $(n-1)$-dimensional unit sphere, $\beta_l(M) $ is the $l$-th Betti number of $M$ over $\mb{R}$, $\II(x)$ is the second fundamental form of $M \subset \mb{R}^n$ at $x$,
and $y$ ranges over unit vectors in $\mb{R}^n$ that are on the fiber $T_x ^\perp M $ of normal bundle $T ^\perp M $, $\langle \cdot , \cdot \rangle_{\mb{R}^n}$ is the Euclidean inner product in $\mb{R}^n$, and $\sigma^{m-1}$ is the standard surface measure on $\mb{S}^{m-1} \subset T_x ^\perp M \subset \mb{R}^n$.

We can see that $\big| \det \big(\langle \II(x) , y  \rangle\big) \big| \leq C(n,m) \cdot ||\II(x)||^{n-m}$. To estimate the norm of $\II(x)$, we can also get from Theorem~\ref{Thm: APP Curvature} that, 
	\begin{equation}\label{E: Bound Second Fundamental Form}
		\begin{split}
			||\II||^2   &\leq C(n,m)\cdot||A^{-1}|| \cdot ||\nabla^2 F || ^2 
                \\      & = C(n,m)\cdot |\det A |^{-1} \cdot ||\mr{adj} A|| \cdot ||\nabla^2 F ||^2
                \\      &\leq C(n,m)\cdot |\det A |^{-1} \cdot ||\nabla F||^{2(m-1)} \cdot ||\nabla^2 F ||^2 ,
		\end{split}
	\end{equation}
for some positive constants $C(n,m)$. We can use the same $h$ and $h_Q$ as shown in the proof of Theorem~\ref{Thm: L^1 Bound on Number} and then obtain the proof of Theorem~\ref{Thm: L^1 Bound on Betti}.

Before finishing this section, we prove a quantitative Bulinskaya's lemma for random fields, where the proof will use the optimal degree $\alpha$ appeared in Lemma~\ref{L: Singular Integral}. 
We will prove it for Gaussian random fields for simplicity, although the result and the proof actually do not depend on Gaussian random variables essentially. 
This lemma is one of the key factors that will be used when we prove theorems in Section~\ref{Section: Local Double Limit and Global Limit}.

Assume that our Gaussian random field $F : C_{R+1} \to \mb{R}^{m}$ satisfies \textit{ $(R; M,k_1)$-assumptions} on $C_{R+1}$. But $C^{2-}$-smoothness for $F$ is already enough for our proof here. 
First, let $||\cdot||$ denote the norm of a vector and we define 
    \begin{equation}\label{E: Definition of Smallest Eigenvalue}
        \lambda(F(x)) \equiv \min_{w \in \mb{S}^{m-1}} ||\nabla (F \cdot w) ||(x).
    \end{equation}
So, ${\lambda(F(x))} ^2$ is the smallest eigenvalue of the matrix $A = {(\nabla f_{i_1} \cdot \nabla f_{i_2})}_{m \times m}$ at $x$. This matrix $A$ appeared in the proof of Theorem~\ref{Thm: L^1 Bound on Number}. 
Then, we can obtain the following lemma.
\begin{lemma}\label{L: Quantitative Bulinskaya}
	\textit{
    Given $\delta >0$, there exists a $\tau = \tau(\delta, R,n,m,M,k_1) \in (0,1/2)$, such that 
        \begin{equation}
            \mc{P} \big( \min_{x \in \bar{C}_R} \max\{||F(x)||,\lambda(F(x)) \} < \tau \big)  < \delta
        \end{equation}
	}
\end{lemma}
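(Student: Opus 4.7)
The plan is a discretization plus union bound argument. The random field $F$ has $C^{2+\beta}$ sample paths with uniform bounds, and the pointwise joint distribution of $(F(x),\nabla F(x))$ is uniformly non-degenerate Gaussian, so one should expect the bad event $\{\max(||F||,\lambda) < \tau\}$ to occur with probability $\lesssim \tau^{n+1-}$ at any fixed point; after paying a grid factor this yields a small probability on $\bar{C}_R$.

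First, by the Kolmogorov continuity theorem (or a Borell--TIS type inequality) combined with $(A2)$, I would choose $A = A(\delta,R,n,m,M)>0$ so that the event
$E_A \equiv \{||\nabla F||_{C^0(\bar{C}_{R+1})}, \ ||\nabla^2 F||_{C^0(\bar{C}_{R+1})} \leq A\}$
has probability at least $1 - \delta/2$. Cover $\bar{C}_R$ by a cubic grid of side $h = \tau/A$, with centers $\{x_\alpha\}$, totaling $N \lesssim (RA/\tau)^n$ points. On $E_A$, if some $x^\ast \in \bar{C}_R$ satisfies $||F(x^\ast)|| < \tau$ and $\lambda(F(x^\ast)) < \tau$, the nearest center $x_\alpha$ satisfies $||F(x_\alpha)|| \leq (1+\sqrt{n})\tau$ by the mean value theorem, and $\lambda(F(x_\alpha)) \leq (1+\sqrt{n})\tau$ because the smallest singular value is $1$-Lipschitz in the operator norm of $\nabla F$ and $\nabla F$ is $A$-Lipschitz. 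Hence the bad event lies in $E_A^c \cup \bigcup_\alpha B_\alpha$, where $B_\alpha \equiv \{||F(x_\alpha)|| \leq C_0\tau, \ \lambda(F(x_\alpha)) \leq C_0\tau\}$ and $C_0 = 1+\sqrt{n}$.

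The heart of the proof is a pointwise estimate $\mc{P}(B_\alpha) \leq C\tau^{m+s}$ for some $s > n-m$, uniformly in $x_\alpha$. Conditioning on $F(x_\alpha) = f$, the factor $\int_{||f||\leq C_0 \tau} p_{F(x_\alpha)}(f)\,df$ is at most $C\tau^m$ by the bounded density from $(A1)$, leaving us to bound $\mc{P}(\lambda(F(x_\alpha)) \leq C_0\tau \mid F(x_\alpha) = f)$ uniformly by $C\tau^s$. Conditional on $F(x_\alpha)=f$, the Gaussian vector $\nabla F(x_\alpha)$ has uniformly non-degenerate covariance (by $(A1)$) and mean linear in $f$, hence bounded for $||f||\leq C_0\tau$. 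Using $\lambda^{-s} \leq (\det A)^{-s/2} \cdot ||\nabla F||^{s(m-1)}$ with $A = {(\nabla F)}^T \nabla F$ (compare (\ref{E: Bound Mean Curvature})) and Cauchy--Schwarz, it suffices to bound $\mc{E}[(\det A)^{-s} \mid F = f]$; the spherical decomposition of the Gaussian integral exactly as in (\ref{E: Determinant Integral Origin})--(\ref{E: Determinant Integral Spherical Decomposition}), together with the sharp range of exponents in Lemma~\ref{L: Singular Integral}, gives finiteness for any $s < (n-m+1)/2$. Thus $\mc{E}[\lambda^{-s} \mid F = f] \leq C(n,m,M,k_1)$ for any $s < n-m+1$, and Markov yields $\mc{P}(\lambda \leq C_0\tau \mid F=f) \leq C\tau^s$.

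Combining, for any $s < n-m+1$,
\begin{equation*}
\mc{P}\Big(\min_{x\in\bar{C}_R}\max(||F(x)||,\lambda(F(x))) < \tau\Big) \leq \frac{\delta}{2} + \sum_\alpha \mc{P}(B_\alpha) \leq \frac{\delta}{2} + C\,(RA/\tau)^n \cdot \tau^{m+s} = \frac{\delta}{2} + C'\,\tau^{m+s-n}.
\end{equation*}
Since $s$ can be chosen with $m+s > n$ (take $s$ close to $n-m+1$), the second term tends to $0$ as $\tau \downarrow 0$, and selecting $\tau$ sufficiently small in terms of $\delta, R, n, m, M, k_1$ gives the claim. \emph{The main obstacle} is the sharpness assertion in Lemma~\ref{L: Singular Integral}: a bound of $\mc{E}[\lambda^{-s}] < \infty$ only for $s < n-m$ would give $m+s-n < 0$ and the union bound would be useless; what rescues the argument is precisely the gain from $-(n-m)/2$ to $-(n-m+1)/2$ in the admissible range of $\alpha$ in Lemma~\ref{L: Singular Integral}, which supplies the extra factor of $\tau$ needed to overcome the $\tau^{-n}$ cost of the grid.
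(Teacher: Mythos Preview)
Your approach is correct and is a genuinely different route from the paper's. The paper does not discretize; instead it argues that if the bad event occurs at some $y\in\bar C_R$, then a carefully chosen function $\Phi_\eta(x) = ||F(x)||^{-m\eta}\,|\det A(x)|^{-\frac{n-m+1}{2}\eta}$ is uniformly large on the whole ball $B(y,\tau)$, so $\int_{C_{R+1}}\Phi_\eta \ge c\,W^{-\alpha_1\eta}\tau^{n-\alpha_2\eta}$ with $W=1+||F||_{C^{1+\beta}}$; choosing $\beta,\eta$ close to $1$ makes the exponent of $\tau$ negative, and Markov plus H\"older plus the pointwise bound $\mc{E}[\Phi_\eta^{\,p}(x)]\le C$ (which is exactly where Lemma~\ref{L: Singular Integral} enters) finishes. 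Your grid-plus-union-bound is more elementary and perhaps more transparent, at the cost of tracking the grid size; the paper's integral argument avoids any discretization but requires guessing the right $\Phi_\eta$. Both proofs hinge on the same sharp range $\alpha>-\tfrac{n-m+1}{2}$ in Lemma~\ref{L: Singular Integral}, and you correctly identify this as the crux.

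One slip to fix: the Cauchy--Schwarz step as written loses a factor of two. After Cauchy--Schwarz on $(\det A)^{-s/2}||\nabla F||^{s(m-1)}$ you need $\mc{E}[(\det A)^{-s}]<\infty$, which by Lemma~\ref{L: Singular Integral} holds only for $s<\tfrac{n-m+1}{2}$; this range gives $m+s<\tfrac{m+n+1}{2}\le n$, so the union bound \emph{fails}. The cure is immediate: either use H\"older with conjugate pair $(p,p')$, $p$ just above $1$, so that the constraint becomes $ps/2<\tfrac{n-m+1}{2}$ and you recover the full range $s<n-m+1$; or, cleaner still, skip the splitting altogether and compute $\mc{E}[(\det A)^{-s/2}||\nabla F||^{s(m-1)}]$ directly in spherical coordinates as in~(\ref{E: Determinant Integral Spherical Decomposition}), where the angular integrand is $|\det(w_{t_1}\cdot w_{t_2})|^{-s/2}$ and Lemma~\ref{L: Singular Integral} gives finiteness precisely for $s<n-m+1$, while the radial part is integrable since $s<n$. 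With that correction your argument goes through.
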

\begin{proof}
    Denote $\Omega_\tau$ as the event that 
        \begin{equation}
            \{ \exists\ y \in \bar{C}_R, \text{ s.t. } ||F(y)|| < \tau, \ \lambda(F(y)) < \tau  \}.
        \end{equation}
    We put $W = 1 + ||F||_{C^{1+ \beta}(C_{R+1/2})}$ with $\beta \in (0,1)$ to be specified later. Here $||F||_{C^{1+ \beta}(C_{R+1/2})}$ is the $C^{1 + \beta}$-norm for $F$ on the cube $C_{R+1/2}$. Then, if $\Omega_\tau$ occurs for a $y \in \bar{C}_R$, we assume that $||\nabla( F \cdot w_y) || (y) < \tau$ for a corresponding fixed $w_y \in \mb{S}^{m-1}$.  Then, for $x$ in the ball $B(y,\tau)$,
        \begin{equation}
            ||F(x)|| \leq \tau + \tau \cdot ||F||_{C^{1+ \beta}(C_{R+1/2})} =  W \tau,
        \end{equation}
    and 
        \begin{equation}
            \lambda(F(x)) \leq ||\nabla (F \cdot w_y) ||(x) \leq \tau + \tau^{\beta} \cdot ||F||_{C^{1+ \beta}(C_{R+1/2})} \leq W \tau^{\beta}.
        \end{equation}
    Since each element of the matrix ${(\nabla F (x))}^T (\nabla F(x)) = {(\nabla f_{i_1} \cdot \nabla f_{i_2})}_{m \times m}$ has an upper bound $W^2$, the largest eigenvalue of it is also bounded by $C(n,m) \cdot W^2$. We can further get that 
        \begin{equation}
            \big|\det \big({(\nabla F (x))}^T (\nabla F(x))\big) \big| \leq C(n,m) \cdot {(\lambda(F(x)))}^2 \cdot W^{2(m-1)} \leq C(n,m) \cdot W^{2m} \tau^{2\beta} .
        \end{equation}
    Let $\alpha_1 = m + (n-m+1) m$, $\alpha_2 = m + (n-m+1)\beta$. Then, for some $\eta \in (0,1)$, we define a function $\Phi_\eta(x)$ for $x \in C_{R+1}$ and get that
        \begin{equation}
            \Phi_{\eta}(x) \equiv ||F(x)||^{-m \eta } \cdot \big|\det \big({(\nabla F (x))}^T (\nabla F(x))\big) \big| ^{-\frac{n-m+1}{2} \eta} \geq C(n,m) \cdot W^{-\alpha_1 \eta } \cdot \tau^{-\alpha_2 \eta},
        \end{equation}
    where the inequality is for $x \in B(y,\tau)$.
    So, whenever $\Omega_\tau$ happens, we have that
        \begin{equation}
            \int_{C_{R+1}} \Phi_{\eta}(x) \geq \int_{B(y,\tau)} \Phi_{\eta}(x) \geq C(n,m) \cdot W^{-\alpha_1 \eta } \cdot \tau^{n-\alpha_2 \eta} . 
        \end{equation}
    Notice that $n- \alpha_2 \eta $ = $n(1- \eta \beta) - m \eta (1-\beta) - \beta \eta$. Hence, we can choose $\beta = \beta(n,m) \in (0,1)$, $\eta = \eta(n,m) \in (0,1)$, which are close to $1$, such that $n - \alpha_2 \eta < - \epsilon$ for some small positive $\epsilon = \epsilon(n,m) \in (0,1/2)$.
    Hence,
        \begin{equation}
            \begin{split}
                \mc{P}(\Omega_\tau) &\leq   C(n,m) \cdot \tau^{\alpha_2 \eta - n} \cdot \mc{E} \bigg[W^{\alpha_1 \eta} \cdot \int_{C_{R+1}} \Phi_{\eta}(x) \bigg]
                \\  &\leq C(n,m) \cdot \tau^{\epsilon} \cdot {\bigg(\mc{E} \big(W^{p' \alpha_1 \eta} \big) \bigg)}^{\frac{1}{p'}} \cdot {\bigg( \mc{E}  {\bigg(\int_{C_{R+1}} \Phi_{\eta}(x)  \bigg)}^p\bigg)}^{\frac{1}{p}}
                \\  &\leq C(n,m) \cdot \tau^{\epsilon} \cdot |C_{R+1}|^{1-\frac{1}{p}} \cdot {\bigg(\mc{E} \big(W^{p' \alpha_1 \eta} \big) \bigg)}^{\frac{1}{p'}} \cdot {\bigg( \int_{C_{R+1}} \mc{E}(\Phi_{\eta}^p(x))  \bigg)}^{\frac{1}{p}}
            \end{split}
        \end{equation}
    for some $p' = p'(n,m)> 1$, $p = p(n,m) >1$, with $\frac{1}{p'} + \frac{1}{p} = 1$ and $p \cdot \eta <1$. 
    
    For the term $\mc{E} \big(W^{p' \alpha_1 \eta} \big)$, an upper bound depending on $n,m,M,k_1$ follows from the Kolmogorov's theorem, see for example Appendix A.9 and Appendix A.11 in~\cite{NS16}.
    Next, since for each $x \in C_{R+1}$, the joint distribution of $(F(x),\nabla F(x))$ satisfies $(A1)$ in Definition~\ref{D: Axiom 1}, then similar to (\ref{E: Determinant Integral Origin}), we can write
        \begin{equation}
            \begin{split}
		\mc{E}(\Phi_{\eta}^p(x)) &= \mc{E} \bigg(||F(x)||^{-m \eta p} \cdot {\big|\det {\big({(\nabla F (x))}^T (\nabla F(x))\big)} \big|} ^{-\frac{n-m+1}{2} \eta p}  \bigg),
		\\	&\leq C(n,m,M, k_1) \cdot \int_{\mb{R}^m} ||v_0||^{-m\eta p} \cdot e^{-c(n,m,M,k_1)(||v_0||^2)} \ dv_0
		\\	& \cdot	\int_{{(\mb{R}^n)}^{\otimes m}} {|\det(v_{t_1} \cdot v_{t_2})|}^{-\frac{n-m+1}{2}\eta p} \cdot e^{-c(n,m,M,k_1)({||v_1||}^2 + \cdots {||v_m||}^2)}\ dv_1 \cdots d v_m,
	    \end{split}
        \end{equation}
    where we use $v_0 $ to denote the components of $F(x) = (f_1(x) ,\dots, f_m(x))$, and $(v_1, \dots, v_m)$ to denote components of $\nabla F(x) = ({(\nabla f_1 (x))}^T, \dots , {(\nabla f_m (x))}^T)$, and $(v_{t_1} \cdot v_{t_2})$ to denote the matrix with elements $v_{t_1} \cdot v_{t_2}$.
    Since $\eta p < 1$ by our choice, we can bound the above two terms by some constants $C = C(n,m,M,k_1) > 0$ following the proof of Theorem~\ref{Thm: L^1 Bound on Number}. See the process proving (\ref{E: Determinant Integral in Curvature}), (\ref{E: Determinant Integral Spherical Decomposition}) and Lemma~\ref{L: Singular Integral}.

    Since $\epsilon = \epsilon(n,m) >0$, we can finish the proof for Lemma~\ref{L: Quantitative Bulinskaya}.
    
\end{proof}



\section{Proof of Theorem~\ref{Main Results: Local} and Theorem~\ref{Main Results: Local Betti}}\label{Section: Stationary Random Field}

With theorems exhibited in Section~\ref{Section: L^1}, we are now ready to prove Theorem~\ref{Main Results: Local} and Theorem~\ref{Main Results: Local Betti}.
In this section, we assume that $F:\mb{R}^n \to \mb{R}^{m}$, $m \in(0,n)$, is a centered stationary Gaussian random field satisfying $(A1)$ and $(A2)$ in Definition~\ref{D: Axiom 1} at the origin $x=0$, which is equivalent to saying that $F$ satisfies the \textit{ $(R; M,k_1)$-assumptions} on at the origin $x = 0$.
Also, since $F$ is stationary, these assumptions hold true at any point $x \in \mb{R}^n$.

We will first focus on Theorem~\ref{Main Results: Local} and explain the necessary modifications for proving Theorem~\ref{Main Results: Local Betti} after the proof of Theorem~\ref{Main Results: Local}. We first explain some definitions in Theorem~\ref{Main Results: Local} and give some basic settings similar to the $m=1$ cases in~\cite{NS16,SW19,W21}.
We denote $\mfk{B}(C^1 (\mb{R}^n ,\mb{R}^{m}))$ as the Borel $\sigma$-algebra generated by open sets in $C^1 (\mb{R}^n ,\mb{R}^{m})$, and denote $\gamma_F = F_* (\mc{P})$ as the pushforward probability measure of $\mc{P}$, where $\mc{P}$ is the probability measure on the background probability space $(\Omega, \mfk{S}, \mc{P})$.
By Bulinskaya's lemma (see either Proposition 6.12 of~\cite{AW09} or our Lemma~\ref{L: Quantitative Bulinskaya}), we see that we only need to consider a Borel subset of $C^1(\mb{R}^n,\mb{R}^m)$:
    \begin{equation}
        C^1_* (\mb{R}^n ,\mb{R}^{m}) \equiv \{G \in C^1(\mb{R}^n,\mb{R}^m) \ | \ \nabla G \text{ is of full rank on } Z(G)\},
    \end{equation}
since $\gamma_F(C^1 (\mb{R}^n ,\mb{R}^{m})  \backslash C^1_* (\mb{R}^n ,\mb{R}^{m}) ) = 0$.

The action of $\mb{R}^n$ on $(C^1_* (\mb{R}^n ,\mb{R}^{m}), \mfk{B}(C^1_* (\mb{R}^n ,\mb{R}^{m})), \gamma_F )$ is by shifts $\tau_v: G(x) \mapsto G(x + v)$, which is measure preserving since $F$ is stationary.
We say that the action of $\mb{R}^n$ is \textit{ ergodic} if, for every set $A \subset  \mfk{B}(C^1_* (\mb{R}^n ,\mb{R}^{m}))$ that satisfies $\gamma_F((\tau_v)A \Delta A) =0$ for all $v \in \mb{R}^n$, then either $\gamma_F(A) = 0$ or $\gamma_F(A ) = 1$. Here, $\Delta$ is the symmetric difference.

We restate Theorem~\ref{Main Results: Local} in the following way.

\begin{theorem}\label{Thm: Euclidean Random Field}
	\textit{
    Assume that $F:\mb{R}^n \to \mb{R}^{m}$, $m \in(0,n)$, is a centered stationary Gaussian random field satisfying the \textit{ $(R; M,k_1)$-assumptions} at the origin $x=0$.
    \begin{itemize}
        \item[(1)] There exists a number $\nu = \nu_F \geq 0$ so that
                    \begin{equation}\label{E: Growth Degree Constant}
                        \mc{E}(N(R;F)) = \nu \cdot |C_R| + o_{R \to \infty} (R^n).
                    \end{equation} 
        \item[(2)] If the translation action of $\mb{R}^n$ on $(C^1_* (\mb{R}^n ,\mb{R}^{m}), \mfk{B}(C^1_* (\mb{R}^n ,\mb{R}^{m})), \gamma_F )$ is ergodic, then
                    \begin{equation}\label{E: Ergodicity Convergence}
                        \lim_{R \to \infty}\frac{N(R;F)}{|C_R|} = \nu \ \text{almost surely} \quad \text{and} \quad \lim_{R  \to \infty}\mc{E} \bigg| \frac{N(R;F)}{|C_R|} - \nu \bigg| = 0 .
                    \end{equation}
    \end{itemize}
	Moreover, for any $c \in \mc{C}(n-m)$, (1) and (2) hold true if one replaces $N(R;F)$ with $N(R;F,c)$ and replaces the number $\nu_F$ with a $\nu_{F,c} \geq 0$.
    \begin{itemize}
	\item[(3)]
	    \begin{equation}
		\nu_F = \sum_{c \in \mc{C}(n-m)} \nu_{F,c} .
	    \end{equation} 
    \end{itemize}
	}
\end{theorem}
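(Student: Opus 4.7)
The plan is to combine the $L^1$ bound of Theorem~\ref{Thm: L^1 Bound on Number} with a Nazarov--Sodin type superadditivity to produce $\nu_F$ and $\nu_{F,c}$, then use Wiener's ergodic theorem together with the $L^q$ bound (Theorem~\ref{Thm: L^q Bound on Number}) to upgrade to almost sure and $L^1$ convergence, and finally handle the isotopy-class decomposition by monotone exhaustion. For~(1), set $\mu(R):=\mc{E}(N(R;F))/|C_R|$; Theorem~\ref{Thm: L^1 Bound on Number} gives $\mu(R)\leq D_1$ uniformly. For $r<R$ and $k=\lfloor R/r\rfloor$, I tile the concentric sub-cube $C_{rk}\subset C_R$ with $k^n$ pairwise disjoint translates $C_r(x_j)$ of $C_r$; every component of $Z(F)$ fully contained in some $C_r(x_j)$ is a distinct component fully contained in $C_R$, so by stationarity
\[
\mc{E}(N(R;F)) \;\geq\; k^n\, \mc{E}(N(r;F)),\qquad\mu(R)\;\geq\;\tfrac{k^n|C_r|}{|C_R|}\,\mu(r).
\]
Letting $R\to\infty$ with $r$ fixed yields $\liminf_R \mu(R)\geq \mu(r)$, hence $\liminf_R \mu(R)\geq \sup_r \mu(r)$, and the trivial upper bound gives $\lim_R\mu(R)=\sup_r\mu(r)=:\nu_F\in[0,D_1]$, which is~\eqref{E: Growth Degree Constant}. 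Since the isotopy class of a component $\gamma\subset C_r(x_j)\subset C_R$ is the same whether viewed inside $C_r(x_j)$ or inside $C_R$, the identical argument applied to $N(R;F,c)$ defines $\nu_{F,c}\in[0,D_1]$.

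For~(2), I apply Wiener's multiparameter ergodic theorem on $(C^1_*(\mb{R}^n,\mb{R}^m),\mfk{B}(C^1_*(\mb{R}^n,\mb{R}^m)),\gamma_F)$ to the bounded observable $G\mapsto N(r;G)$ to obtain, for every fixed $r>0$,
\[
\lim_{R\to\infty}\frac{1}{|C_R|}\int_{C_R}N(r;\tau_x F)\,dx \;=\; \mc{E}(N(r;F)) \quad \text{a.s.\ and in }L^1.
\]
A sandwich then relates this ergodic average to $N(R;F)/|C_R|$: the lower bound is the disjoint tiling of~(1), while the upper bound uses that any component of $Z(F)\cap C_R$ failing to lie in a single cube of a shifted $r$-grid must meet the grid skeleton. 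This "straddling" count is the main obstacle, since such components need not be confined to any thin region; the Nazarov--Sodin averaged-grid trick resolves it by integrating the partition over translations of its origin in ${[0,2r)}^n$, reducing the bad count to the expected number of components meeting a fixed $(n-1)$-skeleton of total content $O(R^{n-1})$, which by Theorem~\ref{Thm: L^1 Bound on Number} applied to its $r$-thickening is $O(R^{n-1}r)=o(R^n)$. Sending $R\to\infty$ and then $r\to\infty$ yields $N(R;F)/|C_R|\to\nu_F$ almost surely. The $L^1$ convergence follows from a.s.\ convergence together with uniform integrability of $\{N(R;F)/|C_R|\}_R$, furnished by the $L^q$ bound of Theorem~\ref{Thm: L^q Bound on Number} for some $q=q(n,m)>1$. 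The identical argument gives~(2) for each $N(R;F,c)$.

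For~(3), the superadditivity of~(1) applies verbatim to each $\mu_c(R):=\mc{E}(N(R;F,c))/|C_R|$, so $\mu_c(R)\to\nu_{F,c}$. Since $\sum_c\mu_c(R)=\mu(R)\leq D_1$, for every finite $S\subset\mc{C}(n-m)$ one has $\sum_{c\in S}\nu_{F,c}\leq\nu_F$, hence $\sum_c\nu_{F,c}\leq\nu_F<\infty$. Conversely, given $\epsilon>0$, pick a finite $S$ with $\sum_{c\in S}\nu_{F,c}\geq\sum_c\nu_{F,c}-\epsilon/2$ and $R$ so large that $\mu_c(R)\geq\nu_{F,c}-\epsilon/(2|S|)$ for every $c\in S$; then
\[
\mu(R)\;=\;\sum_c\mu_c(R)\;\geq\;\sum_{c\in S}\mu_c(R)\;\geq\;\sum_c\nu_{F,c}-\epsilon,
\]
and letting $R\to\infty$ and then $\epsilon\to 0$ gives $\nu_F\leq\sum_c\nu_{F,c}$, closing~(3).
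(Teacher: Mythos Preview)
Your argument for (1) is essentially the paper's, phrased with a discrete tiling instead of the integral sandwich of Lemma~\ref{L: Integral Sandwich}; both yield $\mu(r)\le\nu_F$ for every $r$ and hence the existence of the limit.

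Part (2) contains two genuine gaps. First, the boundary estimate: you propose to bound the ``straddling'' components by applying Theorem~\ref{Thm: L^1 Bound on Number} to an $r$-thickening of an $(n-1)$-skeleton, but that theorem only bounds components \emph{contained} in a region, not components \emph{meeting} it---a large component can cross the skeleton without lying in any thin neighborhood of it, so the bound $O(R^{n-1}r)$ does not follow. The paper's device is different and essential: one restricts $F$ to each $k$-dimensional face of $\partial C_r$ (for $k=m,\dots,n-1$), observes that this restriction is again a Gaussian field on a $k$-cube satisfying $(A1)$--$(A2)$, and applies Theorem~\ref{Thm: L^1 Bound on Number} \emph{in dimension $k$} to obtain $\mc{E}(\mfk{N}_\#(r;F))\le C\,r^{n-1}$. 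After Wiener's theorem this produces the error term $\mc{E}(\mfk{N}_\#(r;F))/|C_r|=O(1/r)\to0$, which drives both the almost-sure and the $L^1$ convergence in the paper's sandwich. Second, you invoke Theorem~\ref{Thm: L^q Bound on Number} for uniform integrability, but that theorem requires the $(R;M,k_1,k_2)$-assumptions---in particular the two-point condition $(A3)$---which are \emph{not} among the hypotheses of Theorem~\ref{Thm: Euclidean Random Field}. The paper gets the $L^1$ convergence directly from the sandwich and part (1), with no $L^q$ input. (A minor slip: $G\mapsto N(r;G)$ is not bounded, only $L^1$; Wiener still applies.)

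In (3), your ``conversely'' computation actually reproves the inequality you already had: from $\mu(R)\ge\sum_c\nu_{F,c}-\epsilon$ one concludes $\nu_F\ge\sum_c\nu_{F,c}$, not $\nu_F\le\sum_c\nu_{F,c}$. The missing direction follows immediately from your own sup-characterization: since $\mu_c(r)\le\nu_{F,c}$ for every $r$, one has $\mu(r)=\sum_c\mu_c(r)\le\sum_c\nu_{F,c}$, hence $\nu_F=\sup_r\mu(r)\le\sum_c\nu_{F,c}$. This is exactly how the paper closes (3).
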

\begin{remark}
    When $m=1$, there are two assumptions in~\cite{NS16,SW19, W21} about when the shifting action of $\mb{R}^n$  is ergodic and when $\nu_F>0$. 
    For general $m>1$, one may need more ingredients. In particular, when the random field $F$ consists of independent random functions, i.e., $F = (f_1, \dots,f_m)$ and $f_{i_1}$ is independent of $f_{i_2}$ when $i_1 \neq i_2$, we can denote the spectral measure of $f_i$ as $\rho_i$, and assume that $\rho_i$ has no atoms and its compact support has nonempty interior. 
    Under these two assumptions, we can obtain ergodicity and positivity. 
    One can also obtain the ergodicity and positivity of $\nu_F$ for Berry's monochromatic random waves model as shown in Appendix~\ref{APP: Ergodicity} and Appendix~\ref{APP: Positivity}, where we will discuss more on the ergodicity and the positivity without the independence assumption.
\end{remark}

To prove Theorem~\ref{Thm: Euclidean Random Field}, we define $N(x,R; F)$ as the number of all connected components of $Z(F)$ fully contained in the open cube $C_R(x) = C_R +x$, shifted from $C_R$ by $x$, of $\mb{R}^n$, and define $N^*(x,R;F)$ as the number of all connected components of $Z(F)$ that intersect 
the closed cube $\overline{C_R(x)}$. For any $c \in \mc{C}(n-m)$, we can also define $N(x,R; F , c)$ and $N^* (x,R; F , c)$  in a similar way.

So, for any $G \in C^1_* (\mb{R}^n ,\mb{R}^{m})$, $N^*(R;G) - N(R;G) \leq \mfk{N}_\#(R;G)$, where
    \begin{equation}\label{E: Definition of Residue Parts}
        \mfk{N}_\# (R;G) =  
            \begin{cases}
                \sum_{k=i} ^{n-1} \mfk{N}_k(R;G) ,& \ \text{if } Z(G) \text{ transversally intersect and only intersect with } \\
                                   \ & \  \text{$i$-th skeletons to $(n-1)$-th skeletons of $\pa C_R$ } \\
                                   \ & \  \text{for some $i \in \{ m, \dots, n-1\}$}, \\
                                   \ & \ \\
                + \infty,& \ \text{otherwise. And we denote this set as Degen$(R)$}.
            \end{cases}
    \end{equation}
Here, $\mfk{N}_k(R;G)$ is the number of connected components of $Z(G)$ which intersect with skeletons in dimension $k$ but do not intersect with skeletons in dimensions $\leq k$.

Notice that the measurability of $G \mapsto N(R;G)$ for fixed $R$ follows from its lower semicontinuity on $C^1_* (\mb{R}^n ,\mb{R}^{m})$. See for example Lemma~\ref{L: Continuous Stability}.
For the measurability of $\mfk{N}_\# (R;G)$, we notice that transversal intersection is stable under small $C^1$-perturbations, and hence $C^1_* (\mb{R}^n ,\mb{R}^{m}) \backslash \text{Degen}(R)$ is an open subset of $C^1_* (\mb{R}^n ,\mb{R}^{m})$.
By applying Bulinskaya's lemma to each skeleton of $\pa C_R$, we see that $\gamma_F(\text{Degen}(R)) = 0$. The measurability of $\mfk{N}_\#(R;G)$ then follows again from the lower semicontinuity on $C^1_* (\mb{R}^n ,\mb{R}^{m}) \backslash \text{Degen}(R)$.
    
For the expectation of $\mc{E}(\mfk{N}_k(R;F))$, we can restrict the Gaussian random field $F$ on each $k$-th skeleton of $\pa C_R$, which is a $k$-dimensional open cube in the corresponding $k$-dimensional linear subspace of $\mb{R}^n$. Such a restriction still keeps the conditions $(A1)$ and $(A2)$ in Definition~\ref{D: Axiom 1}.
Hence, by Theorem~\ref{Thm: L^1 Bound on Number}, there is a constant $C = C(k,m,M,k_1)>0$ such that $\mc{E}(\mfk{N}_k(R;F)) \leq C \cdot R^k$. So, there is a constant $C = C(n,m,M,k_1)>0$ such that $\mc{E}(\mfk{N}_\# (R;F)) \leq C \cdot R^{n-1}$. This observation is crucial in proving Theorem~\ref{Thm: Euclidean Random Field}.

We also need the following lemma.
\begin{lemma}\label{L: Integral Sandwich}
	\textit{
	For every $r \in (0,R)$, we have that
        \begin{equation}
            \frac{1}{|C_r|} \int_{C_{R-r}} N(x,r;F) \ dx \leq N(R;F) \leq \frac{1}{|C_r|} \int_{C_{R+r}} N^* (x,r;F) \ dx.
        \end{equation}
	}
\end{lemma}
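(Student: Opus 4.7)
\medskip

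The plan is to prove both inequalities by a Fubini/Tonelli interchange between the integration over translations $x$ and the (almost surely finite-or-countable) sum over connected components of $Z(F)$, combined with two elementary geometric observations about the set of translations that ``see'' a given component.

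First I write each integrand as a sum of indicators:
\begin{equation*}
    N(x,r;F) = \sum_{\gamma} \mathbf{1}\bigl[\gamma \subset C_r(x)\bigr], \qquad N^*(x,r;F) = \sum_{\gamma} \mathbf{1}\bigl[\gamma \cap \overline{C_r(x)} \ne \emptyset\bigr],
\end{equation*}
where the sums run over connected components $\gamma$ of $Z(F)$ and $C_r(x) = x + C_r$. Tonelli then reduces both sides to sums over $\gamma$ of the Lebesgue measure of the set of admissible translations. Since $C_r$ is the symmetric open cube, the condition $\gamma \subset x + C_r$ is equivalent to $x \in \bigcap_{y \in \gamma}(y + C_r)$, while $\gamma \cap \overline{C_r(x)} \ne \emptyset$ is equivalent to $x \in \bigcup_{y \in \gamma}(y + \overline{C_r})$.

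For the lower bound I then argue as follows. For a given component $\gamma$, the set $\{x \in C_{R-r} : \gamma \subset C_r(x)\} = C_{R-r} \cap \bigcap_{y \in \gamma}(y + C_r)$ is contained in $y_0 + C_r$ for any fixed $y_0 \in \gamma$, so its measure is at most $|C_r|$. Moreover, using the fact that $C_{R-r} + C_r \subset C_R$ (Minkowski sum of centered open cubes), if this set is nonempty then there exists $x \in C_{R-r}$ with $\gamma \subset C_r(x) \subset C_R$, so $\gamma$ is one of the components counted by $N(R;F)$. Dividing by $|C_r|$ and summing over $\gamma$, the left integral is at most $N(R;F)$.

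For the upper bound I reverse the roles. Fix any component $\gamma$ counted in $N(R;F)$, so that $\gamma \subset C_R$, and pick an arbitrary $y_0 \in \gamma$. For every $x \in y_0 + C_r$ we have $y_0 \in C_r(x)$, hence $\gamma \cap C_r(x) \ne \emptyset$ and $\gamma$ is counted in $N^*(x,r;F)$; and since $y_0 \in C_R$, $y_0 + C_r \subset C_R + C_r = C_{R+r}$, so every such $x$ lies in $C_{R+r}$. Therefore the set of admissible translations for $\gamma$ in the right-hand integral has measure at least $|y_0 + C_r| = |C_r|$. Interchanging the order and summing over $\gamma \subset C_R$ (and discarding nonnegative contributions from $\gamma$'s that meet $\overline{C_r(x)}$ without being contained in $C_R$) yields $N(R;F) \leq \frac{1}{|C_r|}\int_{C_{R+r}} N^*(x,r;F)\, dx$.

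The only mild points to address are the measurability of $x \mapsto N(x,r;F)$ and $x \mapsto N^*(x,r;F)$ and the justification of Tonelli, but these follow from the same lower semicontinuity and $C^1_*$-genericity arguments used earlier in this section (with the exceptional ``degenerate'' set being of measure zero). In particular, there is no real ``hard step'' here: the proof is a direct double counting, and the only care needed is keeping track of open versus closed cubes and of the Minkowski-sum inclusions $C_{R-r} + C_r \subset C_R \subset C_{R+r} \ominus C_r^{-1}$ used in the two directions.
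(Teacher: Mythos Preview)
Your proof is correct and follows essentially the same double-counting/Fubini argument as the paper's cited reference (Lemma~1 in \cite{NS16}); the paper itself omits the proof and simply points to that lemma. The only cosmetic issue is the stray notation ``$C_{R+r} \ominus C_r^{-1}$'' in your final parenthetical, which is undefined and unnecessary---your actual argument uses only the clean Minkowski-sum identities $C_{R-r}+C_r = C_R$ and $C_R + C_r = C_{R+r}$, which are exactly what is needed.
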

The proof of this lemma follows from Lemma 1 in~\cite{NS16}. For any $c \in \mc{C}(n-m)$, similar inequalities hold true if one replaces $N(x,r;F)$ (resp., $N(R;F)$, $N^*(x,r;F)$) with $N(x,r;F,c)$ (resp., $N(R;F,c)$, $N^*(x,r;F,c)$). 
See, for example, Lemma 3.7 in~\cite{SW19}. In~\cite{W21}, there is also a similar lemma for Betti numbers.
We omit the proof for Lemma~\ref{L: Integral Sandwich}.

\begin{proof}[Proof of (1) in Theorem~\ref{Thm: Euclidean Random Field}]
    We only prove (1) for $N(R;F)$ and the proof for $N(R;F,c)$ is the same. Let
        \begin{equation}
            \nu = \limsup_{R \to \infty} \frac{\mc{E}(N(R;F))}{R^n},
        \end{equation}
    which is bounded by a constant $D_1 = D_1(n,m,M,k_1)>0$ by Theorem~\ref{Thm: L^1 Bound on Number}. So, for each $\epsilon >0$, we can choose an $r > 0$ such that 
        \begin{equation}
            \frac{\mc{E}(N(r;F))}{r^n} \geq \nu -\epsilon.
        \end{equation}
    By Lemma~\ref{L: Integral Sandwich}, we see that for all $R > r$,
        \begin{equation}
            \mc{E}(N(R;F)) \geq \frac{1}{|C_r|} \int_{C_{R-r}} \mc{E}( N(x,r;F) ) \ dx  = \frac{|C_{R-r}|}{|C_r|} \cdot \mc{E}(N(r;F)),
        \end{equation}
    since $F$ is stationary. Hence,
        \begin{equation}
            \liminf_{R \to \infty} \frac{\mc{E}(N(R;F))}{R^n} \geq \frac{\mc{E}(N(r;F))}{r^n} \geq \nu - \epsilon.
        \end{equation}
    Let $\epsilon \to 0$, we then get (\ref{E: Growth Degree Constant}).
\end{proof}
\begin{proof}[Proof of (2) in Theorem~\ref{Thm: Euclidean Random Field}]
    We only prove (2) for $N(R;F)$ and the proof for $N(R;F,c)$ is the same. We will use Wiener's ergodic theorem in~\cite{NS16}, which has also been used in~\cite{SW19,W21}.
    We define two random variables:
        \begin{equation}
            \Phi(R,r)  \equiv\frac{1}{|C_r|} \int_{C_{R-r}} N(x,r;F) \ dx = \frac{1}{|C_r|} \int_{C_{R-r}} N(r;\tau_x (F)) \ dx,
        \end{equation}
    and 
        \begin{equation}
            \Psi(R,r)  \equiv\frac{1}{|C_r|} \int_{C_{R+r}} \mfk{N}_\# (x,r;F) \ dx = \frac{1}{|C_r|} \int_{C_{R+r}} \mfk{N}_\# (r;\tau_x (F)) \ dx.
        \end{equation}
    
    By Lemma~\ref{L: Integral Sandwich}, we see that $\Phi(R,r) \leq N(R;F) \leq \Phi(R+2r,r) + \Psi(R,r) $. Also, Wiener's ergodic theorem gives that, for fixed $r$, 
        \begin{equation}
            \lim_{R \to \infty} \frac{\Phi(R,r)}{|C_{R-r}|} = \frac{\mc{E}(N(r;F))}{|C_r|} , \ \text{and } \lim_{R \to \infty} \frac{\Psi(R,r)}{|C_{R+r}|} = \frac{\mc{E}(\mfk{N}_\# (r;F))}{|C_r|}, 
        \end{equation}
    a.s.~and in $L^1(\mc{P})$. Equivalently,
        \begin{equation}
            \lim_{R \to \infty} \frac{\Phi(R,r)}{|C_{R}|} = \frac{\mc{E}(N(r;F))}{|C_r|} , \ \text{and } \lim_{R \to \infty} \frac{\Psi(R,r)}{|C_{R}|} = \frac{\mc{E}(\mfk{N}_\# (r;F))}{|C_r|}, 
        \end{equation}
    a.s.~and in $L^1(\mc{P})$. Notice that
        \begin{equation}\label{E: Split Sandwich into Parts}
            \begin{split}
                \bigg| \frac{N(R;F)}{|C_R|} - \nu  \bigg| &\leq \bigg| \frac{N(R;F)}{|C_R|} - \frac{\Phi(R,r)}{|C_R|}  \bigg| +  \bigg| \frac{\Phi(R,r)}{|C_R|} - \frac{\mc{E}(N(r;F))}{|C_r|}  \bigg| + \bigg| \frac{\mc{E}(N(r;F))}{|C_r|} - \nu  \bigg|
            \\  &= \frac{N(R;F)}{|C_R|} - \frac{\Phi(R,r)}{|C_R|}   +  \bigg| \frac{\Phi(R,r)}{|C_R|} - \frac{\mc{E}(N(r;F))}{|C_r|}  \bigg| + \bigg| \frac{\mc{E}(N(r;F))}{|C_r|} - \nu  \bigg|.
            \end{split}
        \end{equation}
    Hence, for the convergence in mean part, after taking expectations on both sides and letting $R \to \infty$, by (\ref{E: Growth Degree Constant}),
        \begin{equation}
            \begin{split}
                0 &\leq \limsup_{R\to \infty} \mc{E} \bigg[\bigg| \frac{N(R;F)}{|C_R|} - \nu  \bigg|\bigg]
                \\  &\leq \nu- \frac{\mc{E}(N(r;F))}{|C_r|}  +  \bigg| \frac{\mc{E}(N(r;F))}{|C_r|} - \nu  \bigg|.                
            \end{split}
        \end{equation}
    Then, let $r \to \infty$, by (\ref{E: Growth Degree Constant}) again, we get the convergence in mean part of (\ref{E: Ergodicity Convergence}).

    For the almost sure convergence part, we see that we can further reduce (\ref{E: Split Sandwich into Parts}) and get 
        \begin{equation}
            \begin{split}
                \bigg| \frac{N(R;F)}{|C_R|} - \nu  \bigg| &\leq \frac{\Phi(R+2r,r) + \Psi(R,r)}{|C_R|} - \frac{\Phi(R,r)}{|C_R|}  +  \bigg| \frac{\Phi(R,r)}{|C_R|} - \frac{\mc{E}(N(r;F))}{|C_r|}  \bigg|  
                    \\ & \ + \bigg| \frac{\mc{E}(N(r;F))}{|C_r|} - \nu  \bigg|.
            \end{split}
        \end{equation}
    So, a.s., 
        \begin{equation}
            0 \leq \limsup_{R \to \infty} \bigg| \frac{N(R;F)}{|C_R|} - \nu  \bigg| \leq \frac{\mc{E}(\mfk{N}_\# (r;F))}{|C_r|} + \bigg| \frac{\mc{E}(N(r;F))}{|C_r|} - \nu  \bigg|.
        \end{equation}
    By the discussions before Lemma~\ref{L: Integral Sandwich}, we see that $\mc{E}(\mfk{N}_\# (r;F)) \leq C \cdot r^{n-1}$ for a $C = C(n,m,M,k_1) >0$. Then, let $r \to \infty$ and use (\ref{E: Growth Degree Constant}) again,
    we get the almost sure convergence part of (\ref{E: Ergodicity Convergence}).
\end{proof}
\begin{proof}[Proof of (3) in Theorem~\ref{Thm: Euclidean Random Field}]
	By definition, for any $F \in C^1_* (\mb{R}^n ,\mb{R}^{m})$ and any $R > 1$,
		\begin{equation}
			N(R;F) = \sum_{c \in \mc{C}(n-m)} N(R;F,c).
		\end{equation}
	For a subset $A \subset \mc{C}(n-m)$, we define $N(R; F,A)$ as the number of connected components of $Z(F)$ lying entirely in $C_R$ with their isotopy classes lying in $A$. 
    Hence, for any finite subset $A \subset \mc{C}(n-m)$,
		\begin{equation}
			N(R;F) \geq \sum_{c \in A} N(R;F,c).
		\end{equation}	
	Take expectations on both sides, we divide both sides by $|C_R|$ and let $R \to \infty$. 
    By (1) of Theorem~\ref{Thm: Euclidean Random Field}, we see that $\nu_F \geq \sum_{c \in A} \nu_{F,c}$.
	Since $A$ is arbitrary, we see that 
		\begin{equation}
			\nu_F \geq \sum_{c \in \mc{C}(n-m)} \nu_{F,c}.
		\end{equation}
	On the other hand, for any $c \in \mc{C}(n-m)$, by Lemma~\ref{L: Integral Sandwich} applied to $N(R;F,c)$, since $F$ is stationary, we see that
		\begin{equation}
			\frac{|C_{R-r}|}{|C_r|}  \cdot \mc{E}(N(r;F,c))=\frac{1}{|C_r|} \int_{C_{R-r}} \mc{E}(N(x,r;F,c)) \ dx \leq \mc{E}(N(R;F,c)),
		\end{equation}
	when $0<r < R$. Divide both sides by $|C_R|$ and let $R \to \infty$, we see that for any $r >0$, 
		\begin{equation}
			\frac{\mc{E}(N(r;F,c))}{|C_r|} \leq \nu_{F,c}.
		\end{equation}
	Hence, by monotone convergence theorem,
		\begin{equation}
			\frac{\mc{E}(N(r;F))}{|C_r|} =  \mc{E} \bigg(\sum_{c \in \mc{C}(n-m)} \frac{N(r;F,c)}{|C_r|} \bigg) = \sum_{c \in \mc{C}(n-m)} \frac{\mc{E}(N(r;F,c))}{|C_r|} \leq \sum_{c \in \mc{C}(n-m)}\nu_{F,c}.
		\end{equation}
	Let $r \to \infty$, the left hand side becomes $\nu_F$. We then finish the proof.
		
\end{proof}

\begin{remark}
    One can also prove the part (3) of Theorem~\ref{Thm: Euclidean Random Field} by analyzing Cheeger's finiteness theorem quantitatively, like the proof of Theorem 4.2 in~\cite{SW19}, which will give readers a better understanding of the geometry of random zero sets.
\end{remark}

For Theorem~\ref{Main Results: Local Betti}, the proof needs the following inequality like Lemma~\ref{L: Integral Sandwich}, which was proved as Lemma 3.1 in~\cite{W21}.

\begin{lemma}\label{L: Integral Sandwich Betti}
	\textit{
	For each $l = 0, 1, \dots, n-m$ and every $r \in (0,R)$, we have that
        \begin{equation}
            \frac{1}{|C_r|} \int_{C_{R-r}} \beta_l (x,r;F) \ dx \leq \beta_l (R;F),
        \end{equation}
	where $\beta_l (x,r;F)$ is the sum of $l$-th Betti numbers over $\mb{R}$ of all connected components of $Z(F)$ fully contained in the open cube $C_r(x)$.
	}
\end{lemma}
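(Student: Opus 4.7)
The plan is to prove the inequality by a Fubini-style double counting argument, swapping the order of the integral over $x \in C_{R-r}$ and the sum over connected components $\gamma$ of $Z(F)$ that appear in the definition of $\beta_l(x,r;F)$.

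To set up, I would write
\begin{equation*}
    \int_{C_{R-r}} \beta_l(x,r;F)\,dx = \int_{C_{R-r}} \sum_{\gamma \subset Z(F)} \beta_l(\gamma)\,\mathbf{1}[\gamma \subset C_r(x)]\,dx = \sum_{\gamma \subset Z(F)} \beta_l(\gamma) \cdot |E_\gamma|,
\end{equation*}
where $E_\gamma \equiv \{x \in C_{R-r} : \gamma \subset C_r(x)\}$ and the sum ranges over all closed connected components of $Z(F)$. (By Bulinskaya-type regularity, closed components almost surely form a countable collection, so the exchange of sum and integral is justified by Tonelli.) The task is then to show two facts: first, if $E_\gamma \neq \emptyset$, then $\gamma$ is fully contained in $C_R$; and second, $|E_\gamma| \leq |C_r|$ for every such $\gamma$. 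Together these give the desired inequality by restricting the sum on the right to components $\gamma \subset C_R$.

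For the first fact, I would use the elementary translation identity: if $x \in C_{R-r}$, then $C_r(x) = x + (-r,r)^n \subset (-R,R)^n = C_R$, since coordinate-wise $|x_i| < R-r$ and $|y_i| < r$ force $|x_i + y_i| < R$. Hence any $\gamma$ contained in $C_r(x)$ for some $x \in C_{R-r}$ is automatically a component contained in $C_R$, so only such components contribute to the left side. For the second fact, I would observe that $\gamma \subset C_r(x) = x + C_r$ is equivalent to $x \in \bigcap_{y \in \gamma}(y - C_r)$, and since $C_r$ is symmetric about the origin this intersection is a subset of $y_0 - C_r$ for any fixed $y_0 \in \gamma$, a translate of $C_r$ of volume exactly $|C_r|$. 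Therefore $|E_\gamma| \leq |C_r|$.

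Combining the two facts yields
\begin{equation*}
    \int_{C_{R-r}} \beta_l(x,r;F)\,dx \leq |C_r| \sum_{\gamma \subset Z(F) \cap C_R} \beta_l(\gamma) = |C_r| \cdot \beta_l(R;F),
\end{equation*}
which is the claim after dividing by $|C_r|$. There is no genuine obstacle here; the only subtlety is a measure-theoretic one, ensuring that the indicator $\mathbf{1}[\gamma \subset C_r(x)]$ depends measurably on $x$ (equivalently, that $E_\gamma$ is Borel). This follows because $\gamma$ is compact (as a closed component contained in $C_R$), so $E_\gamma = \{x : \max_{y \in \gamma} \|y - x\|_\infty < r\} \cap C_{R-r}$ is open in $C_{R-r}$. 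With this observation the argument is complete.
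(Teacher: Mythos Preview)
Your proof is correct and follows essentially the same approach as the paper: the paper does not prove Lemma~\ref{L: Integral Sandwich Betti} directly (it cites \cite{W21}), but it reproduces exactly this Fubini/double-counting argument in the proof of the lower bound in Theorem~\ref{Thm: Global Betti}, writing $\beta_l$ as $\sum_\gamma \beta_l(\gamma)\chi_{\gamma \subset C_r(x)}$, swapping sum and integral, and bounding $|\{x : \gamma \subset C_r(x)\}| = |\cap_{z\in\gamma} C_r(z)| \leq |C_r|$. Your additional observation that $E_\gamma \neq \emptyset$ forces $\gamma \subset C_R$ (so only components counted in $\beta_l(R;F)$ contribute) is precisely the missing link that makes the inequality go through.
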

Combine Lemma~\ref{L: Integral Sandwich Betti} and Theorem~\ref{Thm: L^1 Bound on Betti}, we can conclude Theorem~\ref{Main Results: Local Betti} by a simila way as we proved Theorem~\ref{Thm: Euclidean Random Field}.
We do not have an almost sure convergence for Betti numbers because for Betti numbers, we cannot similarly define an $\mfk{N}_\# (r;F)$ and get an estimate like $\mc{E}(\mfk{N}_\# (r;F)) \leq C \cdot r^{n-1}$.



\section{Proof of Theorem~\ref{Main Results: Global 1} and Theorem~\ref{Main Results: Global Betti} }\label{Section: Local Double Limit and Global Limit}

In this section, we are going to prove Theorem~\ref{Main Results: Global 1} and Theorem~\ref{Main Results: Global Betti} for parametric ensembles of vector-valued Gaussian fields. 
When $m=1$, those ensembles are constructed in a similar similar way as models in~\cite{CH20, GW15, NS16, SW19,W21}. Let us first give some assumptions similar to Definition~\ref{D: Axiom 1}.

Let ${\{F_L \}}_{L \in \mc{L}}$ be a family of Gaussian fields defined on an open subset $U \subset \mb{R}^n$, where the index $L$ attains a discrete set $\mc{L} \subset \mb{R}$. 
We usually call this family a Gaussian ensemble. We let the covariance matrix associated with $F_L = (f_{L,1}, \dots, f_{L,m})$ be 
    \begin{equation}
        {(K_L)}_{i_1 i_2}(x,y) = \mc{E}\big(f_{L, i_1}(x) f_{L,i_2}(y)\big), \ i_1,i_2 = 1,2,\dots, m,
    \end{equation}
and let the scaled version for $F_{x,L}(u) \equiv F_L(x + L^{-1} u )$ be
\begin{equation}
    {(K_{x,L})}_{i_1 i_2}(u,v) = \mc{E}({(F_{x,L})}_{i_1}(u){(F_{x,L})}_{i_2}(v)) = {(K_L)}_{i_1 i_2}(x + L^{-1}u, x + L^{-1}v).
\end{equation}

\begin{definition}\label{D: Axiom 2}
    We say that a family of Gaussian random fields ${\{F_L\}}_{L \in \mc{L}}$ is locally uniformly controllable, if for every compact subset $Q \subset U$, there are positive constants $M = M(Q),k_1 = k_1(Q)$ such that 
    \begin{itemize}
        \item [$(B1)$]   For each $x \in Q$,  $F_L(x)$ is centered and the joint distribution of $(F_L(x),\nabla F_L (x) )$ is a non-degenerate Gaussian vector with a rescaled uniformly lower bound on the covariance kernel, i.e.,
				\begin{equation}
					\liminf_{L \to \infty} \inf_{x \in Q} \inf_{(\eta,\xi) \in \mb{S}^{m+mn-1}}\mc{E}\bigg(\bigg| \sum_{i = 1}^m\eta_i \cdot f_{L,i} (x) + L^{-1} \cdot \sum_{i=1} ^m \sum_{j=1} ^n \xi_{ij} \cdot \pa_j f_{L,i} (x)\bigg|^2 \bigg) \geq k_1 >0,
				\end{equation} 
                where $\eta = (\eta_i) \in \mb{R}^m$, $\xi = (\xi_{ij}) \in \mb{R}^{nm}$ with $||\eta||^2 + ||\xi||^2 = 1$.
        \item [$(B2)$]  We have the local uniform $C^{3-}$-smoothness:
				\begin{equation}
					\limsup_{L \to \infty}  \max_{0\leq { |\alpha| }\leq 3} \sup_{x \in Q} L^{-2|\alpha|} {| {{D_x}^{\alpha} {D_y }^{\alpha}{(K_L)}_{i_1 i_2}(x,y) |}_{y=x}|} \leq M, \ i_1,i_2 = 1,2,\dots, m.
				\end{equation}  
    \end{itemize}
    
\end{definition}

For an $x \in U$, if there exists a continuous stationary Gaussian field $F_x : \mb{R}^n \to \mb{R}^m$, such that for every finite point set $\mc{U} \subset \mb{R}^n$, the Gaussian vectors $(F_{x,L}) |_{\mc{U}}$ converge to $(F_x)|_{\mc{U}}$ in distribution, then
we call $F_x$ the translation invariant local limit of ${\{F_L\}}_{L \in \mc{L}}$ at $x$. And hence,
    \begin{equation}
        \lim_{L\to \infty} K_{x,L}(u,v) = K_x(u-v) \quad \text{for any } (u,v) \in \mb{R}^n \times \mb{R}^n,
    \end{equation}
where $K_x$ is the covariance kernel of $F_x$. On the other hand, since we have the uniform smoothness assumption $(B2)$, the existence of such an $F_x$ is equivalent to the existence of the limiting covariance kernel $K_x$. See more discussions in Appendix A.12 of~\cite{NS16}.

\begin{definition}\label{D: Axiom 3}
	For a family of Gaussian fields ${\{F_L\}}_{L \in \mc{L}}$ defined on $U \subset \mb{R}^n$ that are also locally uniformly controllable, we say that ${\{F_L\}}_{L \in \mc{L}}$ is tame if there exists a Borel subset $U' \subset U$ of full Lebesgue measure such that, for all $x \in U'$,
	\begin{itemize}
		\item [$(B3)$] ${\{F_L\}}_{L \in \mc{L}}$ has a translation invariant local limit $F_x$ at $x$, which satisfies that the action of $\mb{R}^n$ on $(C^1_* (\mb{R}^n ,\mb{R}^{m}), \mfk{B}(C^1_* (\mb{R}^n ,\mb{R}^{m})), \gamma_{F_x} )$ is ergodic.
		\item [$(B4)$] There is a $k_2 = k_2(x)$, and for every $R > 0$, there is an $L_0 = L_0 (R,x)>0$, such that when $L > L_0$, $F_{x,L}(u)$ satisfies $(A3)$ in Definition~\ref{D: Axiom 1}, i.e., if $x \in Q \cap U'$ for some compact set $Q \subset U$, then when $L > L_0$, $F_{x,L}(u)$ satisfies \textit{ $(R;M(Q),k_1(Q),k_2(x))$-assumptions} on $C_{R+1}$.
	\end{itemize}
\end{definition}

\begin{remark}
    For an $n$-dimensional $C^3$-manifold $X$ without boundary, we say that a parametric Gaussian ensemble ${\{F_L\}}_{L \in \mc{L}}$ is tame if for every $C^3$-chart $(U,\pi)$, $\pi: U \subset \mb{R}^n \to X$, ${\{F_L \circ \pi \}}_{L \in \mc{L}}$ is tame on $U$.
    This definition does not depend on the choice of charts. See Section 9 of~\cite{NS16} when $m=1$.
\end{remark}

We then give an example, the Kostlan's ensemble, as a tame ensemble defined on $\mb{S}^n \subset \mb{R}^{n+1}$ when $m = 1$. When $m>1$, we can let $F_L = (f_{L,1}, \dots , f_{L,m})$ consist of independent ${\{f_{L,i}\}}_{L}$'s so that each ${\{f_{L,i}\}}_{L}$ is a Kostlan's ensemble (or other general tame Gaussian functions ensembles) defined on the same manifold. 
See more examples in~\cite{NS16, S16}.
Apart from these concrete examples, in~\cite{CH20,SW19} and references therein, one will see a large family of Gaussian functions ensembles satisfying our definitions of tame ensembles and these definitions are actually very generic.

\begin{example}[Kostlan's ensemble]\label{Example}
    Consider the linear space of real homogeneous polynomials of degree $d$ with coordinates $(x_0, \dots, x_n) \in \mb{R}^{n+1}$, which is endowed with the inner product
        \begin{equation}
            \langle P_d, Q_d \rangle = \sum_{|J|  = d } \binom{d}{J} ^{-1} p_J q_J,
        \end{equation}
    where 
        \begin{equation}
            J = (j_0 , \dots, j_n) , \ |J| = \sum_{s= 0 } ^n j_s  , \  \binom{d}{J} = \frac{d !}{j_0 ! \cdot \cdots \cdot j_n !} , 
        \end{equation}
    and
        \begin{equation}
            P_d(x) = \sum_{|J| = d} p_J x^J, \  Q_d(x) = \sum_{|J| = d} q_J x^J , \ x^J = x_0 ^{j_0} \cdot \cdots \cdot x_n ^{j_n}.
        \end{equation}
    This inner product, up to a positive constant $C = C(n,d)$, equals to the inner product on the Bargmann-Fock space~\cite{B61}, i.e., the subspace of analytic functions on $\mb{C}^{n+1}$ where the inner product
        \begin{equation}
            \langle f , g \rangle_{BK} \equiv C(n,d)\int_{\mb{C}^{n+1}} f(z) \overline{g(z)} e^{-||z||^2} \ d \vol(z)
        \end{equation}
    is well-defined. One has that $\langle P_d , Q_d \rangle = \langle P_d , Q_d \rangle_{BK}$ after the extension of $P_d$ and $Q_d$ to $\mb{C}^{n+1}$.
    Hence, we get an orthonormal basis
        \begin{equation}
            {\bigg \{ \sqrt{ \binom{d}{J} } x^J \bigg \}}_{|J| = d}
        \end{equation}
    on the linear space of real homogeneous polynomials of degree $d$, with respect to the inner product we defined above.
    We then obtain a random homogeneous polynomial $R_d(x)$ of degree $d$, i.e.,
        \begin{equation}
            R_d(x) = \sum_{|J| = d} \sqrt{ \binom{d}{J} } a_{J} x^J,
        \end{equation}
    where ${\{a_J\}}_{|J| = d}$ are i.i.d.~standard Gaussian random variables. The zero sets of $R_d(x)$ make sense as hypersurfaces on either $\mb{S}^{n}$ or $\mb{R}P^n$, the real projective space of dimension $n$.
    Hence, we can view ${\{R_d(x)\}}_{d \in \mb{N}}$ as a family of Gaussian random functions on $\mb{S}^n$. The two-point covariance kernel is 
        \begin{equation}
            \mc{E}(R_d(x)R_d(y)) = {(\langle x , y \rangle_{\mb{R}^{n+1}})}^d = {(\cos(\theta(x,y)))}^d,
        \end{equation}
    where $\langle x , y \rangle_{\mb{R}^{n+1}}$ is the standard inner product and $\theta(x,y)$ is the angle between $x,y$ as vectors in $\mb{R}^{n+1}$. Fix an $x \in \mb{S}^n$, let $\exp_x: \mb{R}^n = T_x \mb{S}^n \to \mb{S}^n$ be the exponential map at $x$. We consider
        \begin{equation}
            R_{x,d}(u) \equiv R_d \big( \exp_x(d^{-1/2} \cdot u ) \big)
        \end{equation}
    and then
        \begin{equation}
            K_{x,d}(u,v) \equiv \mc{E}(R_{x,d}(u) R_{x,d}(v) ) = {(\langle \exp_x(d^{-1/2} \cdot u )  , \exp_x(d^{-1/2} \cdot v )  \rangle_{\mb{R}^{n+1}})}^d.
        \end{equation}
    As $d \to \infty$, $K_{x,d}(u,v)$, together with its partial derivatives of any finite order, locally uniformly converges to a covariance kernel $K_{R_x}(u,v)$ of a stationary Gaussian function $R_x(u)$ defined on $\mb{R}^n$. This $K_{R_x}$ is actually independent of $x$, and one can see that
        \begin{equation}
            K_{R_x}(u,v) = K_{R}(u-v) \equiv e^{-||u-v||^2/2}.
        \end{equation}
    We have already seen this kernel in Example~\ref{Example: Bargmann-Fock}.
    Define the parameters set $\mc{L} \equiv {\{\sqrt{d} \}}_{d\in \mb{N}}$, one can then verify that this Kostlan's ensemble satisfies all of our definitions of tame ensembles. To see this, we first notice that $K_R$ will satisfy all definitions for some universal constants only depending on $n$, and one uses the uniform convergence from $K_{x,d}$ to $K_R$ to show that these $K_{x,d}$ satisfy our definitions uniformly.
\end{example}

Now, we focus on ${\{F_L\}}_{L \in \mc{L}}$ defined on an open subset $U \subset \mb{R}^n$ again. Assume that ${\{F_L\}}_{{L \in \mc{L}}}$ is tame and $U'$ is the full measure set in Definition~\ref{D: Axiom 3}. 
For each $x \in U'$, we define $\bar{\nu}(x ) \equiv \nu_{F_x}$, where we obtained each $\nu_{F_x}$ from Theorem~\ref{Thm: Euclidean Random Field}.
We say that a Borel measure $n_L$ is a connected component counting measure of $F_L$ if $\spt (n_L) \subset Z(F_L)$ and the $n_L$-mass of each component is $1$.
Now, we can restate Theorem~\ref{Main Results: Global 1} as the following.

\begin{theorem}\label{Thm: Global Limit}
	\textit{
	Assume that ${\{F_L\}}_{L \in \mc{L}}$ is tame on an open set $U \subset \mb{R}^n$. Then,
		\begin{itemize}
			\item [(1)] The function $x \mapsto \bar{\nu}(x )$ is measurable and locally bounded in $U$.
			\item [(2)] For every sequence of connected component counting measures $n_L$ of $F_L$ and for every $\varphi \in C_{c}(U)$, we have that
				\begin{equation}
					\lim_{L \to \infty} \mc{E} \bigg[ \bigg |  \frac{1}{L^n} \int_U \varphi(x)  dn_L(x) - \int_U \varphi(x) \bar{\nu}(x) dx \bigg | \bigg] = 0.
				\end{equation}
		\end{itemize}
	}
\end{theorem}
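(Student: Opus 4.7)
The plan is to dispatch part (1) as a direct consequence of the machinery of Section~\ref{Section: Stationary Random Field}, and then reduce part (2) to a fiber-wise application of Theorem~\ref{Thm: Euclidean Random Field} via a tiling of $U$ by cubes of scale $L^{-1}$.

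For part (1), the quantity $\bar{\nu}(x)=\nu_{F_x}$ is, by Theorem~\ref{Thm: Euclidean Random Field}(1), the limit $\lim_{R\to\infty}\mc{E}(N(R;F_x))/|C_R|$. For each fixed $R$, the Kac--Rice formula (Theorem~\ref{Thm: Kac-Rice}) expresses $\mc{E}(N(R;F_x))$ as an integral involving finitely many values of $K_x$ and its derivatives at the origin, and $x\mapsto K_x$ is the pointwise limit of the measurable family $K_{x,L}$ by $(B3)$. Hence $\bar{\nu}$ is measurable. Local boundedness is immediate from Theorem~\ref{Thm: L^1 Bound on Number}: on any compact $Q\subset U$, assumptions $(B1)$--$(B2)$ force $F_x$ to satisfy the $(R;M(Q),k_1(Q))$-assumptions uniformly for $x\in Q\cap U'$, giving $\bar{\nu}(x)\le D_1(n,m,M(Q),k_1(Q))$.

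For part (2), fix $\varphi\in C_c(U)$ supported in a compact $Q\subset U$. Tile a slight enlargement of $Q$ by disjoint open cubes $C_{r/L}(x_j)$ of side $2r/L$, with centers $x_j\in U'$ chosen by the Lebesgue density theorem, and a large parameter $r$. After the rescaling $u=L(\cdot-x_j)$, components of $Z(F_L)$ fully contained in $C_{r/L}(x_j)$ correspond to components of $Z(F_{x_j,L})$ fully contained in $C_r$, so
\begin{equation*}
    \frac{1}{L^n}\int_U\varphi\,dn_L \;=\; \sum_j \varphi(x_j)\,\frac{|C_{r/L}|}{|C_r|}\, N(r;F_{x_j,L}) \;+\; \mc{R}_{L,r},
\end{equation*}
where $\mc{R}_{L,r}$ combines the continuity error of $\varphi$ on each cube and the contribution of components straddling the tiling boundaries. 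The proof then reduces to two convergences: (i) for each $x\in U'$, $\mc{E}|N(r;F_{x,L})-N(r;F_x)|\to 0$ as $L\to\infty$; and (ii) $\mc{E}(N(r;F_x))/|C_r|\to\bar{\nu}(x)$ as $r\to\infty$, the latter being immediate from the definition of $\bar{\nu}$ via Theorem~\ref{Thm: Euclidean Random Field}(1). For (i), the convergence in distribution $F_{x,L}\to F_x$ from $(B3)$ is upgraded to convergence of the component count by combining the quantitative Bulinskaya Lemma~\ref{L: Quantitative Bulinskaya} (which excludes the degenerate functions on which the count is discontinuous) with the stability Lemma~\ref{L: Continuous Stability}; assumption $(B4)$ ensures the $(r;M,k_1,k_2)$-assumptions hold uniformly in $L$ for $L$ large. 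Uniform integrability, converting distributional convergence into $L^1$-convergence, is supplied by the $L^q$-bound of Theorem~\ref{Thm: L^q Bound on Number} for some $q(n,m)>1$. The boundary portion of $\mc{R}_{L,r}$ is controlled exactly as in the proof of Theorem~\ref{Thm: Euclidean Random Field}: restricted to each face of $\partial C_{r/L}(x_j)$, the rescaled field still satisfies $(A1)$, so Theorem~\ref{Thm: L^1 Bound on Number} gives $O(r^{n-1})$ boundary-crossing components per cube, hence $O(L^n/r)$ total, negligible after dividing by $L^n$ and sending $r\to\infty$.

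The main obstacle is the coordinated double limit: $r$ must grow slowly with $L$ for the Riemann sum $\sum_j\varphi(x_j)|C_{r/L}(x_j)|\bar{\nu}(x_j)$ to approximate $\int\varphi\bar{\nu}\,dx$, yet fast enough for the limit (ii) to be uniform across the $(L/r)^n$ tile centers. We sidestep this by performing the limits sequentially: for each fixed $r$, conclude from (i), the $L^q$-bound, and the boundary estimate that
\begin{equation*}
    \lim_{L\to\infty} \mc{E}\biggl|\frac{1}{L^n}\int_U\varphi\,dn_L \;-\; \int_U \varphi(x)\,\frac{\mc{E}(N(r;F_x))}{|C_r|}\,dx\biggr| \;=\; O(1/r);
\end{equation*}
then send $r\to\infty$, using the pointwise convergence $\mc{E}(N(r;F_x))/|C_r|\to\bar{\nu}(x)$ together with the $r$-uniform upper bound from Theorem~\ref{Thm: L^1 Bound on Number} and dominated convergence.
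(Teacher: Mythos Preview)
Your overall strategy matches the paper's: local boundedness from Theorem~\ref{Thm: L^1 Bound on Number}, a cube-scale comparison, fiber-wise convergence via Lemmas~\ref{L: Quantitative Bulinskaya} and~\ref{L: Continuous Stability}, uniform integrability from Theorem~\ref{Thm: L^q Bound on Number}, and boundary control via $\mfk N_\#$. Two points, however, do not go through as written.

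First, your measurability argument in (1) is incorrect: the Kac--Rice formula does not give $\mc{E}(N(R;F_x))$; it only gives the expected curvature integral that \emph{bounds} $N(R;F_x)$ via Theorem~\ref{Thm: Fenchel}. The paper instead shows $(x,\omega)\mapsto N(R;F_{x,L})/|C_R|$ is jointly measurable (lower semicontinuity in the function variable) and then deduces measurability of $\bar\nu$ from Fubini together with the $L^1$ double limit~\eqref{E: Double Limit in L^1}.

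Second, your sequential limit does not close. Even granting a coupling so that (i) holds, for \emph{fixed} $r$ the variable $N(r;F_{x_j,L})$ converges (in law, or in $L^1$ after coupling) to the \emph{random} $N(r;F_{x_j})$, not to its expectation. Consequently your displayed limit cannot be $O(1/r)$: it retains the mean-absolute-deviation term $\int|\varphi(x)|\,\mc{E}\bigl|N(r;F_x)/|C_r|-\mc{E}(N(r;F_x))/|C_r|\bigr|\,dx$, and nothing in (i) or the boundary estimate makes this vanish at fixed $r$. It only disappears as $r\to\infty$, and precisely because of ergodicity (the $L^1$ convergence in Theorem~\ref{Thm: Euclidean Random Field}(2)). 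The paper avoids this detour by comparing to the deterministic $\bar\nu(x)$ from the start, establishing $\lim_{R\to\infty}\limsup_{L\to\infty}\mc{E}\,|\nu_{R,L}(x,\cdot)-\bar\nu(x)|=0$ (equation~\eqref{E: Double Limit in L^1}) via Lemma~\ref{L: Double Limit in Probability} plus the $L^q$ bound; the outer $R$-limit is exactly where ergodicity enters to kill the fluctuation. The paper also replaces your discrete tiling by the continuous sandwich $\int_{Q_1}\varphi_\pm(x)\,\nu_{R,L}(x,\omega)\,dx$, which sidesteps the difficulty that your tile centers $x_j$ depend on $L$, so the pointwise-in-$x$ convergence (i) cannot be applied at a fixed point across the $L$-limit.
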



Consider an $x \in U' $ with the translation invariant limit
$F_x : \mb{R}^n \to \mb{R}^m$, which is centered and stationary. We see, for example, by Appendix A.12 of~\cite{NS16}, that $F_x(u)$ satisfies $(A1)$, $(A2)$, and $(A3)$ in Definition~\ref{D: Axiom 1} at the origin $u=0$ with $M(Q)$ and $k_1(Q)$ if $x \in Q$ for a compact set $Q \subset U$, and with $k_2(x)$ in $(B4)$ of Definition~\ref{D: Axiom 3}. We then have a local double limit lemma since $F_x$ also satisfies the ergodicity assumption $(B3)$ in Definition~\ref{D: Axiom 3}.
\begin{lemma}\label{L: Double Limit in Probability}
	\textit{
    Assume that ${\{F_L\}}_{L \in \mc{L}}$ is locally uniformly controllable and $F_x$ is its translation invariant local limit at a fixed $x \in U$. 
    We also assume that for this $F_x$, the action of $\mb{R}^n$ on $(C^1_* (\mb{R}^n ,\mb{R}^{m}), \mfk{B}(C^1_* (\mb{R}^n ,\mb{R}^{m})), \gamma_{F_x} )$ is ergodic.
    Then, for any $\epsilon >0$,
        \begin{equation}
            \lim_{R \to \infty} \limsup_{L \to \infty} \mc{P} \bigg(\bigg| \frac{N(R; F_{x,L})}{|C_R|} - \nu_{F_x}\bigg| > \epsilon \bigg) = 0 . 
        \end{equation}
	}
\end{lemma}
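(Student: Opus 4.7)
The plan is to decompose the probability via the triangle inequality into a ``$F_{x,L}$ to $F_x$'' part and a ``$F_x$ to $\nu_{F_x}$'' part, then send $L\to\infty$ first to kill the former and $R\to\infty$ next to kill the latter. Concretely, for fixed $R$ and $\epsilon>0$, I would bound
\begin{equation*}
\mathcal{P}\bigg(\bigg|\tfrac{N(R;F_{x,L})}{|C_R|}-\nu_{F_x}\bigg|>\epsilon\bigg)
\leq \mathcal{P}\big(N(R;F_{x,L})\neq N(R;F_x)\big)
+ \mathcal{P}\bigg(\bigg|\tfrac{N(R;F_x)}{|C_R|}-\nu_{F_x}\bigg|>\epsilon\bigg).
\end{equation*}
The second term is $o_R(1)$ by Theorem~\ref{Thm: Euclidean Random Field}(2), since $F_x$ is centered, stationary, satisfies $(A1)$--$(A2)$ at the origin by assumption $(B1)$--$(B2)$, and the shift action is ergodic by $(B3)$.

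Next, I would establish the functional convergence $F_{x,L}\to F_x$ in $C^2(\bar{C}_{R+1})$ in distribution. The local uniform control $(B2)$ gives a uniform-in-$L$ bound on the $C^3$-covariances of $F_{x,L}$, hence on the $C^{2+\beta}$-moduli of continuity (Kolmogorov's theorem); this gives tightness in $C^2(\bar{C}_{R+1})$. Since $K_{x,L}\to K_x$ pointwise, finite-dimensional distributions converge, so by Prokhorov we get convergence in distribution in $C^2(\bar{C}_{R+1})$. By Skorokhod's representation, I may realize $F_{x,L}$ and $F_x$ on a common probability space with $F_{x,L}\to F_x$ almost surely in $C^2(\bar{C}_{R+1})$.

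The first term is then controlled by a stability argument. By the quantitative Bulinskaya Lemma~\ref{L: Quantitative Bulinskaya} applied to $F_x$ on $\bar{C}_R$, for each $\delta>0$ there is $\tau=\tau(\delta,R)>0$ such that with probability at least $1-\delta$, one has $\max\{\|F_x(y)\|,\lambda(F_x(y))\}\geq\tau$ for all $y\in\bar{C}_R$. By applying Bulinskaya on each face of $\partial C_R$ (the $\mathfrak{N}_\#$-type argument used before Lemma~\ref{L: Integral Sandwich}), we may also assume that $Z(F_x)$ is transverse to every skeleton of $\partial C_R$. On this good event, the implicit function theorem combined with the forthcoming Lemma~\ref{L: Continuous Stability} gives that any $G\in C^1(\bar{C}_{R+1},\mathbb{R}^m)$ sufficiently close to $F_x$ in $C^1$ has $N(R;G)=N(R;F_x)$: the zero set of $G$ is a small $C^1$-graph over that of $F_x$ and no component can be created, destroyed, merged, or shifted across $\partial C_R$. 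Since $F_{x,L}\to F_x$ a.s.\ in $C^1$, this forces $N(R;F_{x,L})=N(R;F_x)$ for all large $L$ on this event, and hence $\limsup_{L\to\infty}\mathcal{P}(N(R;F_{x,L})\neq N(R;F_x))\leq \delta$. Letting $\delta\to 0$ afterwards and then $R\to\infty$ yields the lemma.

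The main obstacle is the stability step, because $N(R;\cdot)$ is only lower semicontinuous on $C^1_*(\mathbb{R}^n,\mathbb{R}^m)$ in general: components tangent to or passing through $\partial C_R$ can be added or lost under $C^1$-perturbation. The resolution is precisely the two-layer genericity used above, namely that almost surely $F_x$ is both uniformly non-degenerate on $\bar{C}_R$ (Lemma~\ref{L: Quantitative Bulinskaya}) and transverse to every lower-dimensional skeleton of $\partial C_R$ (Bulinskaya on faces), which together make $N(R;\cdot)$ locally constant at $F_x$. Once this is in place the rest of the argument is a routine triangle-inequality and Skorokhod coupling.
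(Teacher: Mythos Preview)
Your proposal is correct and follows the same overall strategy as the paper: couple $F_{x,L}$ to $F_x$ in $C^1$ (tightness from $(B2)$, finite-dimensional convergence from $K_{x,L}\to K_x$, then Skorokhod), invoke the quantitative Bulinskaya Lemma~\ref{L: Quantitative Bulinskaya} to land in a stability regime, and finish with the ergodic convergence of Theorem~\ref{Thm: Euclidean Random Field}(2) for $F_x$.

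The one genuine difference is how you handle the boundary $\partial C_R$. You aim for the exact equality $N(R;F_{x,L})=N(R;F_x)$ on a good event, which forces you to add a boundary-transversality condition (Bulinskaya on faces) so that no component can be gained or lost across $\partial C_R$ under perturbation. The paper sidesteps this: it does \emph{not} try to match the counts exactly, but only proves the sandwich
\[
N(R-1;F_x)\ \le\ N(R;F_{x,L})\ \le\ N(R+1;F_x)
\]
by applying Lemma~\ref{L: Continuous Stability} once in each direction (with $(G,\tilde G)=(F_x,F_{x,L})$ and $(F_{x,L},F_x)$), and then bounds
\[
\mc{P}\!\bigg(\Big|\tfrac{N(R;F_{x,L})}{|C_R|}-\nu_{F_x}\Big|>\epsilon\bigg)
\le
\mc{P}\!\bigg(\tfrac{N(R+1;F_x)}{|C_R|}-\nu_{F_x}>\epsilon\bigg)
+\mc{P}\!\bigg(\tfrac{N(R-1;F_x)}{|C_R|}-\nu_{F_x}<-\epsilon\bigg)
+\mc{P}(\Sigma).
\]
Since $|C_{R\pm1}|/|C_R|\to1$, both of the first two terms vanish as $R\to\infty$ by the ergodic theorem. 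This buys a slightly cleaner argument: no separate transversality event is needed, and Lemma~\ref{L: Continuous Stability} is used only in its stated one-sided form. Your route is also valid and has the minor conceptual advantage of showing that $N(R;\cdot)$ is genuinely locally constant at $\gamma_{F_x}$-a.e.\ point; the cost is the extra boundary-genericity step you correctly identified as the main obstacle.
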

\begin{remark}\label{Rmk: Smooth Double Limit}
	For each $l = 0, 1, \dots, n-m$, Lemma~\ref{L: Double Limit in Probability} holds true if we replace $N(R; F_{x,L})$ with $\beta_l(R; F_{x,L})$ and replace $\nu_{F_x}$ with $\nu_{l;F_x}$. 
    For any $c \in \mc{C}(n-m)$, Lemma~\ref{L: Double Limit in Probability} also holds true if one replaces $N(R; F_{x,L})$ with $N(R; F_{x,L},c)$ and replaces $\nu_{F_x}$ with $\nu_{F_x,c}$. 
    
\end{remark}

When $m=1$, the proof of this lemma was shown in Theorem 5 in~\cite{S16}, Proposition 6.2 of~\cite{SW19}, or Theorem 1.5 in~\cite{W21}. 
In order to prove this Lemma~\ref{L: Double Limit in Probability}, one needs to replace some lemmas for random functions with those for random fields, which are our Lemma~\ref{L: Quantitative Bulinskaya} and Lemma~\ref{L: Continuous Stability}. 
So, we only sketch the proof idea.
On the other hand, this result can be improved to $L^1$-convergence, i.e., 
    \begin{equation}
        \lim_{R \to \infty} \limsup_{L \to \infty} \mc{E} \bigg(\bigg| \frac{N(R; F_{x,L})}{|C_R|} - \nu_{F_x}\bigg|  \bigg) = 0 ,
    \end{equation}
after building up our Theorem~\ref{Thm: L^q Bound on Number}. See (\ref{E: Double Limit in L^1}) in the proof for part (1) of Theorem~\ref{Thm: Global Limit}.

\begin{proof}[Sketch of the proof idea for Lemma~\ref{L: Double Limit in Probability}]
    Recall that
        \begin{equation}
            \lim_{R \to \infty} \mc{P} \bigg(\bigg| \frac{N(R; F_{x})}{|C_R|} - \nu_{F_x}\bigg| > \frac{\epsilon}{2} \bigg) = 0,
        \end{equation}
    by Theorem~\ref{Thm: Euclidean Random Field}. Fix a large $R > 0$. One can also show that 
        \begin{equation}
            \lim_{A \to \infty} \mc{P} \big( ||F_x||_{C^{2}(C_{2R})} > A \big) = 0,
        \end{equation}
    and by our quantitative Bulinskaya's lemma, Lemma~\ref{L: Quantitative Bulinskaya},
        \begin{equation}
            \lim_{\tau \to 0} \mc{P} \big( \min_{y \in \bar{C}_{2R}} \max\{||F_x(y)||,\lambda(F_x(y)) \} < \tau \big) = 0 .
        \end{equation}
    Also, one can show that for any $\delta >0$,
        \begin{equation}
            \limsup_{L \to \infty} \mc{P} \big( ||F_{x,L} - F_x||_{C^1(C_{2R})} > \delta \big) = 0.
        \end{equation}
    The above limits show that, outside an event $\Sigma$ with small probability, $Z(F_x) \cap C_R$ are all regular submanifolds, and $Z(F_{x,L}) \cap C_R$ is actually a tiny perturbation of $Z(F_{x}) \cap C_R$. Hence, for those components of $Z(F_{x,L})$ fully contained in $C_R$, they are $C^1$-isotopic to those components of $Z(F_{x})$ fully contained in $C_{R+1}$.
    In particular, $N(R-1;F_{x}) \leq  N(R;F_{x,L}) \leq N(R+1;F_{x})$. Hence, 
        \begin{equation}
            \begin{split}
                \mc{P} \bigg(\bigg| \frac{N(R; F_{x,L})}{|C_R|} - \nu_{F_x}\bigg| > \epsilon \bigg) &\leq \mc{P} \bigg( \frac{N(R+1; F_{x})}{|C_R|} - \nu_{F_x} > \epsilon \bigg)
                \\  & \  + \mc{P} \bigg( \frac{N(R-1; F_{x})}{|C_R|} - \nu_{F_x} < - \epsilon \bigg) + \mc{P}(\Sigma).
            \end{split}
        \end{equation}

Let us formulate the $C^1$-isotopic result we used above.

\begin{lemma}\label{L: Continuous Stability}
	\textit{
    Fix a small $\tau >0$, a large $A>0$, and a $\beta \in (0,1)$. Assume that $G(x) \in C^{1+\beta}(C_{R+2}, \mb{R}^m)$ for some $m \in (0,n)$ and $||G||_{C^{1+\beta}(C_{R+2})} < A$. If for each $x \in C_{R+1}$, either $||G(x)|| > \tau$ or $\lambda(G(x)) > \tau$ (see the definition for $\lambda(G(x))$ in Lemma~\ref{L: Quantitative Bulinskaya}), then, there is a positive constant $\delta = \delta(n , m, \tau, A, \beta) > 0$,
    such that for any $C^1$-vector field $\tilde{G}(x) \in C^1(C_{R+2}, \mb{R}^m)$ with $|| \tilde{G}(x) - G(x) ||_{C^1(C_{R+1})} < \delta$, one can show that each connected component of $Z(G)$ fully contained in $C_{R-1}$ is $C^1$-isotopic to a connected component of $Z(\tilde{G})$ fully contained in $C_{R}$, and this map is injective.
    In particular, $N(R; \tilde{G}) \geq N(R-1; G)$.
	}
\end{lemma}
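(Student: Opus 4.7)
The plan is to exploit the $C^{1+\beta}$-regularity of $G$ to upgrade the pointwise dichotomy $\|G(x)\| > \tau$ or $\lambda(G(x)) > \tau$ into an open-cover dichotomy with a definite scale, and then to build the isotopy via a parameterised implicit function theorem applied to the linear homotopy $H_t = (1-t)G + t\tilde G$, $t \in [0,1]$.

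\textbf{Step 1 (neighbourhood dichotomy).} Hölder continuity of $\nabla G$ with bound $A$ together with Lipschitz continuity of $G$ yields a radius $r = r(\tau, A, \beta) > 0$ such that the open sets $U_1 \equiv \{x \in C_{R+1} : \|G(x)\| > \tau/2\}$ and $U_2 \equiv \{x \in C_{R+1} : \lambda(G(x)) > \tau/2\}$ cover $C_{R+1}$, and moreover for every $x$ in one of these sets the ball $B(x,r)$ lies entirely in the same set. In particular $Z(G) \cap C_{R+1} \subset U_2$ admits a collar of width $r$ inside which $\lambda(G) > \tau/2$.

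\textbf{Step 2 (persistent submersion and implicit function theorem).} Choose $\delta = \delta(n,m,\tau,A,\beta)$ small enough that $\|\tilde G - G\|_{C^1(C_{R+1})} < \delta$ forces $\|\tilde G\| > \tau/4$ on $U_1$ (so $Z(\tilde G) \cap U_1 = \emptyset$) and $\lambda(H_t) > \tau/4$ on $U_2$ uniformly in $t \in [0,1]$. Then $H : U_2 \times [0,1] \to \mathbb{R}^m$ is a submersion in the $x$-variable throughout the deformation, so $\mathcal{Z} \equiv H^{-1}(0) \cap (U_2 \times [0,1])$ is a $C^1$-submanifold of dimension $n - m + 1$ whose projection to $[0,1]$ is a submersion. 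The horizontal flow
\[\dot x_t = -\bigl(\nabla H_t(x_t)^T \nabla H_t(x_t)\bigr)^{-1} \nabla H_t(x_t)^T \bigl(\tilde G - G\bigr)(x_t)\]
traces $\mathcal{Z}$ and defines $C^1$-diffeomorphisms $\Phi_t : Z(G) \cap U_2 \to Z(H_t) \cap U_2$.

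\textbf{Step 3 (quantitative displacement and injectivity).} The flow speed above is bounded by $C(n,m) \cdot \delta / \tau^2$. Shrinking $\delta$ so that this speed is at most $\min(r,1)/2$ guarantees that every trajectory starting in $Z(G) \cap C_{R-1}$ stays inside $U_2$ and moves by less than $1$ in $C^0$-norm over $t \in [0,1]$. Hence each connected component of $Z(G)$ fully contained in $C_{R-1}$ is carried by $\Phi_1$ to a connected component of $Z(\tilde G)$ fully contained in $C_R$, with $\{\Phi_t\}_{t \in [0,1]}$ providing the $C^1$-isotopy. Injectivity of the assignment on components follows by running the construction in reverse (swapping the roles of $G$ and $\tilde G$), which produces a one-sided inverse. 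The bound $N(R; \tilde G) \geq N(R-1; G)$ is then immediate.

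The main obstacle is ensuring the horizontal flow of Step 2 does not escape $U_2$ before $t = 1$; if it did, the parameterised implicit function theorem would break down at the $U_1$–$U_2$ interface. This is precisely why the neighbourhood dichotomy of Step 1 is essential: the $r$-collar around $Z(G)$ inside $U_2$ absorbs the displacement bound of Step 3, provided $\delta$ is chosen small enough that the integrated speed is well below $r$. A minor additional subtlety is that $\tilde G$ is only $C^1$ (not $C^{1+\beta}$), but this is harmless since the implicit function theorem and the ODE argument above operate at the $C^1$ level.
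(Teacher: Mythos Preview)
Your approach is essentially the paper's: both linearise via $G_t=(1-t)G+t\tilde G$, verify that $Z(G_t)$ stays regular on a collar of $Z(G)$, and then track each component through the deformation. The paper implements the tracking by the contraction mapping theorem on normal discs $D^m(x,r)\subset (T_x\gamma)^\perp$ at each $x\in\gamma$, while you use the horizontal flow on $\mathcal Z=H^{-1}(0)$; these are the two standard realisations of Thom's isotopy argument, which the paper itself cites.

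One technical wrinkle is worth flagging. The right-hand side of your ODE involves $\nabla H_t$, which is only $C^0$ in $x$ since $\tilde G$ is merely $C^1$, so uniqueness of the flow and $C^1$-dependence on initial data are not delivered by Picard--Lindel\"of; your closing remark that ``the ODE argument operates at the $C^1$ level'' glosses over this. The paper's normal-disc contraction, built from the \emph{fixed} linearisation $\nabla G(x)$ (which lies in $C^\beta$), sidesteps the issue and produces $y_t(x)$ that is $C^1$ in $(x,t)$ directly from the $C^1$ implicit function theorem. Separately, your injectivity argument by ``swapping $G$ and $\tilde G$'' does not apply as written, since $\tilde G$ need not satisfy the $C^{1+\beta}$ hypothesis of Step~1; but injectivity follows immediately once you observe that each image $\Phi_1(\gamma)$ is a compact boundaryless $(n-m)$-submanifold of the $(n-m)$-manifold $Z(\tilde G)$, hence open and closed there, hence a full connected component.
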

For this stability lemma, one can see, for example, Lemma 4.3 of~\cite{W21}, Proposition 6.8 of~\cite{SW19}, and Thom's isotopy Theorem.
The general idea is considering $G_t(x) = G(x) + t \cdot (\tilde{G}(x)-G(x))$, $t \in [0,1]$. Since now $|| \tilde{G}(x) - G(x) ||_{C^1(C_{R+1})} $ is small, for each $t$, we know that $Z(G_t)$ is regular. 
For each connected component of $Z(G)$ fully contained in $C_{R-1}$, say $\gamma$, the normal vector bundle ${(T\gamma)}^\perp$ is trivial and one can use $G(x)$ as a local trivialization of this normal vector bundle.
At each $x \in \gamma$, one can consider an $m$-dimensional disc $D^m(x,r) \subset {(T_x \gamma)}^{\perp} \subset \mb{R}^n$ for some $r = r(n,m,\tau,A,\beta)>0$ small, and then apply contraction mapping theorem to get a unique $y_t$ in each $D(x,r)$ such that $G_t(y_t) = 0$. This process actually builds up the $C^1$-isotopy.
If one only needs the inequality $N(R; \tilde{G}) \geq N(R-1; G)$, one can just use $C^0$-perturbations, i.e., just assume that $|| \tilde{G} - G ||_{C^0(C_{R+1})}$ is small. 
The proof follows by replacing the contraction mapping theorem with Brouwer's fixed-point theorem. Similar inequalities of $C^0$-perturbations also hold true for Betti numbers, see for example Theorem 2 of~\cite{LS19}.

So, we can finish the proof idea for Lemma~\ref{L: Double Limit in Probability}.

\end{proof}

In order to prove Theorem~\ref{Thm: Global Limit}, we need an estimate stronger than Theorem~\ref{Thm: L^1 Bound on Number} with the assumption $(A3)$ in Definition~\ref{D: Axiom 1}.

\begin{theorem}\label{Thm: L^q Bound on Number}
	\textit{
    Assume that $F : C_{R+1} \to \mb{R}^{m}$, satisfies \textit{ $(R; M,k_1,k_2)$-assumptions} on $C_{R+1}$. Then, there is a constant $D_2 = D_2(n,m, M,k_1,k_2) >0$ and a constant $q = q(n,m) > 1$, such that for $R >1$,
        \begin{equation}
            \mc{E}({(N(R;F))}^q) \leq D_2 \cdot |C_R|^q.
        \end{equation}
	}
\end{theorem}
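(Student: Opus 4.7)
The plan is to imitate the proof of Theorem~\ref{Thm: L^1 Bound on Number} but at the level of higher moments, taking $q = 2$ (so that $q>1$ and the two-point assumption $(A3)$ is precisely what is needed). First, applying the Fenchel--Borsuk--Willmore--Chern--Lashof inequality of Theorem~\ref{Thm: Fenchel} to each connected component and then raising to the square gives
\[
N(R;F)^2 \leq C(n,m) \int_{(Z(F) \cap C_R)^2} \|\bH(u_1)\|^{n-m} \|\bH(u_2)\|^{n-m} \, d\mc{H}^{n-m}(u_1)\, d\mc{H}^{n-m}(u_2).
\]
Next, with the same $W = (\nabla F, \nabla^2 F)$ and the same monotone cut-off family $h_Q$ from the proof of Theorem~\ref{Thm: L^1 Bound on Number}, I would apply the $s=2$ Kac--Rice formula of Theorem~\ref{Thm: Kac-Rice} to the function $(u_1,u_2) \mapsto h_Q(W(u_1))\, h_Q(W(u_2))$, and pass to the limit $Q \to \infty$ by monotone convergence, yielding
\[
\mc{E}(N(R;F)^2) \leq C \int_{C_R \times C_R} \mc{E}\!\left[\,\prod_{t=1}^{2}\, h\cdot\sqrt{\det A}\,(u_t)\,\Big|\,F(u_1)=F(u_2)=0\right] p_{(F(u_1),F(u_2))}(0,0)\, du_1 du_2.
\]

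For the joint density factor I would split the domain into $\{\|u_1-u_2\|>1\}$ and $\{\|u_1-u_2\|\le 1\}$. On the first region, $(A3)$ gives the bound $k_2$ and the two Jacobians $\nabla F(u_1), \nabla F(u_2)$ remain ``uniformly separated'' enough that a two-sided Gaussian regression (projecting out $(F(u_1),F(u_2))$) yields a conditional covariance for the rest of $W$ that is uniformly non-degenerate in $n,m,M,k_1,k_2$; the conditional expectation then reduces, by an argument identical to the one used in the proof of Theorem~\ref{Thm: L^1 Bound on Number}, to a product of two bounded singular integrals of the form controlled by Lemma~\ref{L: Singular Integral}. On the near-diagonal region, $(A3)$ gives the scaling $p\le k_2 \|u_1-u_2\|^{-m}$, which is integrable since $m<n$, contributing at most $C\cdot R^n$ after integration against a uniform bound on the conditional expectation. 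Combining the two regions gives $\mc{E}(N(R;F)^2) \leq D_2 \cdot R^{2n} = D_2 \cdot |C_R|^2$.

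The main obstacle is step four, the uniform control of the conditional expectation in the near-diagonal regime $\|u_1 - u_2\|\le 1$. When the two points are close, conditioning on $F(u_1)=F(u_2)=0$ is essentially conditioning on $F(u_1)=0$ together with the directional derivative $\partial_{u_2-u_1}F$ being small, which makes the conditional Gaussian law of $\nabla F$ degenerate in a rank-$m$ direction; I would decompose $\nabla F(u_1)$ and $\nabla F(u_2)$ into their components along and orthogonal to $u_2-u_1$, apply Cauchy--Schwarz to split the product, and then, after the Gaussian regression, recognise the residual integrand as being of exactly the type analysed by Lemma~\ref{L: Singular Integral} at each point independently, with the sharp threshold $\alpha > -\tfrac{n-m+1}{2}$ providing just enough room to absorb the factor $\|u_1-u_2\|^{-m}$ picked up from the rescaling in the tangential direction. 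The same scheme extends to any sufficiently small integer $q=q(n,m)>1$ provided one replaces $(A3)$ by the analogous $q$-point nondegeneracy bound, but for the statement as written $q=2$ suffices.
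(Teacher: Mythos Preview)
Your proposal takes a genuinely different route from the paper, and the near-diagonal step does not go through as stated.

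\medskip
\textbf{What the paper does.} The paper does \emph{not} feed the singular curvature integrand into the two-point Kac--Rice formula. Instead, starting from $N(R;F)\le C\int_{Z(F)\cap C_R}\|\mathbf H\|^{n-m}\,d\mathcal H^{n-m}$, it applies H\"older and then Young's inequality to obtain, for $q>1$ close to $1$ and $\theta=q/(2-q)$,
\[
N(R;F)^q\ \le\ C\,|C_R|^{q-1}\!\!\int_{Z(F)\cap C_R}\!\!\|\mathbf H\|^{(n-m)\theta}\,d\mathcal H^{n-m}\ +\ C\,|C_R|^{q-2}\big(\mathcal H^{n-m}(Z(F)\cap C_R)\big)^2.
\]
The first term is handled by the \emph{one}-point Kac--Rice formula exactly as in Theorem~\ref{Thm: L^1 Bound on Number}; the only change is that the exponent on $\det A$ becomes $\tfrac{1-(n-m)\theta}{2}$, which stays above the sharp threshold $-\tfrac{n-m+1}{2}$ of Lemma~\ref{L: Singular Integral} once $\theta$ is close enough to $1$. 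The second term uses the two-point Kac--Rice formula with the \emph{nonsingular} integrand $h\equiv 1$; here $J_{2,F}(1;u_1,u_2)$ is a moment of conditioned Gaussians and is shown to be bounded by $C(n,m,M,k_1,k_2)$ uniformly in $(u_1,u_2)$, while $(A3)$ supplies the integrable bound on $p_{(F(u_1),F(u_2))}(0,0)$. Thus the singular analysis is kept one-point, and the two-point analysis is kept nonsingular.

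\medskip
\textbf{Where your approach breaks.} With $q=2$ and the curvature integrand in the two-point formula, after separating $\nabla^2 F$ you must control
\[
\mathcal E\Big[\,|\det A(u_1)|^{\frac{1-(n-m)}{2}}\,|\det A(u_2)|^{\frac{1-(n-m)}{2}}\cdot(\text{poly})\ \Big|\ F(u_1)=F(u_2)=0\Big].
\]
As $u_2\to u_1$ along $v$, conditioning on $F(u_1)=F(u_2)=0$ forces $\partial_v f_i(u_1)\to 0$ for all $i$, so the effective law of $\nabla F(u_1)$ lives on $v^\perp\simeq\mathbb R^{n-1}$; simultaneously $A(u_1)$ and $A(u_2)$ coalesce, so the product carries the exponent $1-(n-m)$ on a single $\det A$. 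Lemma~\ref{L: Singular Integral} in ambient dimension $n-1$ requires the exponent to exceed $-\tfrac{(n-1)-m+1}{2}=-\tfrac{n-m}{2}$, i.e.\ $n-m<2$. Hence for $n-m\ge 2$ the conditional expectation blows up at the diagonal; Cauchy--Schwarz only makes this worse (it squares the singular factor at each point). Your claim that the sharp threshold ``provides just enough room to absorb the factor $\|u_1-u_2\|^{-m}$'' confuses two separate losses: the density factor $\|u_1-u_2\|^{-m}$ is already integrable against $\|u_1-u_2\|^{n-1}$, but the additional blow-up of the conditional expectation is not compensated by anything in your scheme. This is precisely why the paper decouples so that the two-point formula only sees $h\equiv 1$.

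Finally, note that the theorem asserts $q=q(n,m)>1$, not $q=2$; in the paper $q$ must be taken close to $1$ (so that $\theta$ stays inside the Lemma~\ref{L: Singular Integral} window), and the allowable $q$ genuinely depends on $n-m$.
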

Before we give the proof, let us remark that a similar result also holds true for Betti numbers.
\begin{theorem}\label{Thm: L^q Bound on Betti}
	\textit{
    Assume that $F : C_{R+1} \to \mb{R}^{m}$, satisfies \textit{ $(R; M,k_1,k_2)$-assumptions} on $C_{R+1}$. Then there is a constant $\widetilde{D}_2 = \widetilde{D}_2(n,m, M,k_1,k_2) >0$ and a constant $q = q(n,m) > 1$, such that for $R >1$,
        \begin{equation}
            \sum_{l=0}^{n-m} \mc{E}({(\beta_l(R;F))}^q) \leq \widetilde{D}_2 \cdot |C_R|^q.
        \end{equation}
	}
\end{theorem}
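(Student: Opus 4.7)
The plan is to derive Theorem~\ref{Thm: L^q Bound on Betti} as an almost immediate corollary of Theorem~\ref{Thm: L^q Bound on Number} by routing both estimates through a single curvature integral $X_R$. Recall from the proof of Theorem~\ref{Thm: L^1 Bound on Betti} that the Chern--Lashof inequality (Theorem~\ref{Thm: Chern}) together with the pointwise comparison (\ref{E: Bound Second Fundamental Form}) gives, for every realization of $F \in C^1_*(\mb{R}^n, \mb{R}^m)$,
\begin{equation*}
\sum_{l=0}^{n-m} \beta_l(R;F) \;\leq\; C(n,m) \int_{Z(F) \cap C_R} h(W(u)) \, d\mc{H}^{n-m}(u) \;=:\; C(n,m) \cdot X_R,
\end{equation*}
where $W = (\nabla F, \nabla^2 F)$ and $h$ is exactly the function introduced in the proof of Theorem~\ref{Thm: L^1 Bound on Number}.

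For $q \geq 1$ and nonnegative $\beta_l$, the elementary inequality $\sum_l \beta_l^q \leq (\sum_l \beta_l)^q$ (expand the right hand side and drop the cross terms) reduces Theorem~\ref{Thm: L^q Bound on Betti} to the single $L^q$ estimate $\mc{E}(X_R^q) \leq D(n,m,M,k_1,k_2) \cdot |C_R|^q$ for some $q = q(n,m) > 1$. But the same $X_R$ also satisfies $N(R;F) \leq C(n,m) X_R$ pointwise by (\ref{E: Bound Components Number}), so this very $L^q$ bound is the substantive content established inside the proof of Theorem~\ref{Thm: L^q Bound on Number}; extracting it from there and combining with the display above completes Theorem~\ref{Thm: L^q Bound on Betti}, with $\widetilde{D}_2$ proportional to $D_2$.

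The main obstacle, inherited from Theorem~\ref{Thm: L^q Bound on Number}, is controlling $\mc{E}(X_R^q)$ itself. I expect the natural strategy to take $q \in (1,2]$ and interpolate by H\"older, so that it suffices to bound $\mc{E}(X_R)$, already done in the proof of Theorem~\ref{Thm: L^1 Bound on Number}, together with $\mc{E}(X_R^2)$. The latter is computed via the two-point Kac--Rice formula (\ref{E: Kac-Rice}) with $s=2$: on the region $||u_1-u_2|| > 1$ the joint density is bounded by $k_2$ and the two-point conditional expectation $J_{2,F}$ effectively decouples, so that contribution reduces to the same single-point argument as in the proof of Theorem~\ref{Thm: L^1 Bound on Number} and yields $O(R^{2n})$. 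The genuinely singular region $||u_1-u_2|| \leq 1$ is where assumption $(A3)$ enters: it supplies $p_{(F(u_1),F(u_2))}(0,0) \leq k_2 ||u_1-u_2||^{-m}$, and the conditional expectation $J_{2,F}$ is estimated after orthogonalizing $(\nabla F(u_1), \nabla F(u_2))$ against $(F(u_1), F(u_2))$ and applying an $O(n)$-invariant singular integral in the spirit of Lemma~\ref{L: Singular Integral}. Since $n > m$, the resulting radial factor $\int_0^1 r^{-m} r^{n-1} \, dr$ converges, so the near-diagonal contribution is only $O(R^n)$ and is dominated by the far-diagonal $O(R^{2n})$; thus $\mc{E}(X_R^2) \leq D' |C_R|^2$, and the H\"older interpolation step then yields the desired $\mc{E}(X_R^q) \leq D |C_R|^q$.
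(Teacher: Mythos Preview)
Your reduction in the first two paragraphs is correct and is exactly the paper's argument: the paper proves Theorem~\ref{Thm: L^q Bound on Betti} in one sentence, by observing that the proof of Theorem~\ref{Thm: L^q Bound on Number} goes through verbatim once Theorem~\ref{Thm: Fenchel} is replaced by the Chern--Lashof inequality (Theorem~\ref{Thm: Chern}).

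Your third paragraph is where you diverge from the paper. What you sketch---H\"older interpolation between $\mc{E}(X_R)$ and $\mc{E}(X_R^2)$, with the latter computed by the two-point Kac--Rice formula for the singular integrand $h$---is \emph{not} how the paper proves Theorem~\ref{Thm: L^q Bound on Number}, and it runs into real trouble. The product $h(W(u_1))h(W(u_2))\sqrt{\det A(u_1)\det A(u_2)}$ in $J_{2,F}(h;u_1,u_2)$ carries $|\det A(u_1)\det A(u_2)|^{(1-(n-m))/2}$; near the diagonal, conditioning on $F(u_1)=F(u_2)=0$ forces $\partial_w F(u_1)$ to have variance $O(r^2)$, which effectively drops the ambient dimension in Lemma~\ref{L: Singular Integral} from $n$ to $n-1$ while the exponent on $|\det A|$ becomes $1-(n-m)$, violating the lemma's threshold once $n-m\geq 3$. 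Even away from the diagonal, your claimed ``decoupling'' of $J_{2,F}(h)$ would require a quantitative lower bound on the conditional covariance of $\nabla F(u_1)$ given $(F(u_1),F(u_2))$, which $(A1)$--$(A3)$ do not supply. The paper avoids both issues by a different decomposition (see (\ref{E: Bound Components Number 2})--(\ref{E: Bound Components Number 3})): a pointwise H\"older--Young split turns the $q$-th power of the curvature integral into $|C_R|^{q-1}\int_{Z(F)\cap C_R}||\bH||^{(n-m)\theta}\,d\mc{H}^{n-m}$ plus $|C_R|^{q-2}\big(\mc{H}^{n-m}(Z(F)\cap C_R)\big)^2$, so that the two-point formula is applied only to the \emph{volume} (integrand $\equiv 1$, for which only upper bounds on conditional variances are needed and are extracted from $(A1)$--$(A3)$), while the curvature singularity is confined to a \emph{one}-point Kac--Rice with exponent $\theta=q/(2-q)$ just above $1$, still within reach of Lemma~\ref{L: Singular Integral}.
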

The necessary modification is to again use the Chern-Lashof inequality in Theorem~\ref{Thm: Chern}. Hence, we only give the proof for Theorem~\ref{Thm: L^q Bound on Number}.

\begin{proof}[Proof of Theorem~\ref{Thm: L^q Bound on Number}]
    For a $q = q(n,m)>1$ but close to $1$ and a positive integer $k$ to be determined later, by (\ref{E: Bound Components Number}), we have that for some positive constants $C = C(n,m)$,
    \begin{equation}\label{E: Bound Components Number 2}
        \begin{split}
            &{(N(R;F))}^q \leq C(n,m) \cdot {\bigg( \int_{F^{-1}(0) \cap C_R} ||\bH||^{n-m} \ d\mc{H}^{n-m} \bigg)}^q
            \\         &\leq C \cdot \bigg( \int_{F^{-1}(0) \cap C_R} ||\bH||^{(n-m)q} \ d\mc{H}^{n-m} \bigg) \cdot {\big(\mc{H}^{n-m}(F^{-1}(0) \cap C_R)\big)}^{q-1}
            \\         &= C \cdot  \bigg( \int_{F^{-1}(0) \cap C_R} ||\bH||^{(n-m)q} \ d\mc{H}^{n-m} \bigg) \cdot {\bigg(\frac{\mc{H}^{n-m}(F^{-1}(0) \cap C_R)}{|C_R|}\bigg)}^{q-1} \cdot |C_R|^{q-1}
            \\         &= C \cdot |C_R|^{q-1}  \bigg[ \int_{F^{-1}(0) \cap C_R} {(||\bH||^{\frac{(n-m)q(k+1)}{k}})}^{\frac{k}{k+1}}    
            \\         & \qquad \qquad \qquad \quad  \cdot {\bigg( {\bigg(\frac{\mc{H}^{n-m}(F^{-1}(0) \cap C_R)}{|C_R|}\bigg)}^{(k+1)(q-1)} \bigg)}^{\frac{1}{k+1}}  \ d\mc{H}^{n-m} \bigg]
            \\         &\leq \frac{C}{k+1}  |C_R|^{q-1} \bigg[ \int_{F^{-1}(0) \cap C_R} k  (||\bH||^{\frac{(n-m)q(k+1)}{k}}) 
            \\         & \qquad \qquad \qquad \quad + \bigg( {\bigg(\frac{\mc{H}^{n-m}(F^{-1}(0) \cap C_R)}{|C_R|}\bigg)}^{(k+1)(q-1)} \bigg)  \ d\mc{H}^{n-m} \bigg]
            \\         &\leq C \cdot   |C_R|^{q-1} \int_{F^{-1}(0) \cap C_R}  (||\bH||^{\frac{(n-m)q(k+1)}{k}})   \ d\mc{H}^{n-m}
            \\         & \quad +C \cdot |C_R|^{-k(q-1)}{\big(\mc{H}^{n-m}(F^{-1}(0) \cap C_R)\big)}^{1+(k+1)(q-1)}.
        \end{split}
    \end{equation}
We set $k = \frac{2-q}{q-1}$ and we will determine $q = q(m,n)$ later. Then, we can see that 
    \begin{equation}\label{E: Bound Components Number 3}
       \begin{split}
        {(N(R;F))}^q &\leq  C \cdot |C_R|^{q-1} \int_{F^{-1}(0) \cap C_R}  (|\bH|^{\frac{(n-m)q}{2-q}})   \ d\mc{H}^{n-m} 
        \\ & \quad + C\cdot |C_R|^{q-2}{\big(\mc{H}^{n-m}(F^{-1}(0) \cap C_R)\big)}^{2}.
       \end{split}
    \end{equation}
Notice that when $q$ is greater than $1$ and close to $1$, $q/(2-q)$ is also greater than $1$ and close to $1$. To simplify the notation, we let $q/(2-q) = \theta >1$.

In order to estimate $\mc{E}({(N(R;F))}^q)$, by (\ref{E: Bound Components Number 3}), we need to estimate 
    \begin{equation}\label{E: Bound Term 1}
        \mc{E} \bigg(\int_{F^{-1}(0) \cap C_R}  (|\bH|^{(n-m)\theta})   \ d\mc{H}^{n-m} \bigg )
    \end{equation}
and 
    \begin{equation}\label{E: Bound Term 2}
        \mc{E}\bigg({\big(\mc{H}^{n-m}(F^{-1}(0) \cap C_R)\big)}^{2} \bigg).
    \end{equation}

Since $\theta$ is close to $1$, the estimate for (\ref{E: Bound Term 1}) is the same as Theorem~\ref{Thm: L^1 Bound on Number} and, in particular, the same as estimating (\ref{E: Determinant Integral in Curvature}). The $\theta$ is then allowed to be greater than $1$ but close to $1$ because of the sharp constant $\alpha$ shown in Lemma~\ref{L: Singular Integral}.
So, we can choose a $\theta = \theta(n,m)>1$ and choose a $q = q(n,m)>1$ such that $k+1 = \frac{1}{q-1}$ is a positive integer. 
We also get that (\ref{E: Bound Term 1}) is bounded by $|C_R|$ multiplying a constant $C = C(n,m,M,k_1) >0$.

For (\ref{E: Bound Term 2}), by the Kac-Rice Theorem~\ref{Thm: Kac-Rice}, we have that
        \begin{equation}
            \mc{E}\bigg({\big(\mc{H}^{n-m}(F^{-1}(0) \cap C_R)\big)}^{2} \bigg) = \int_{C_R \times C_R} J_{2,F}(1;u_1,u_2) p_{(F(u_1), F(u_2))}(0) \ du_1 du_2,
        \end{equation}
where  $J_{2,F}(h;u_1, u_2) $ has the expression that $J_{2,F}(h;u_1, u_2) = $
    \begin{equation*}
       \mc{E}\bigg[ \sqrt{\det[{(\nabla F (u_1))}^T (\nabla F(u_1) )]} \sqrt{\det[{(\nabla F (u_2))}^T (\nabla F(u_2) )]} \ \bigg| \  F(u_1) = F(u_2) =  0 \bigg].  
    \end{equation*}
We have an estimate for $p_{(F(u_1), F(u_2))}(0)$ from the $(A3)$ assumption in Definition~\ref{D: Axiom 1}. 
For this $J_{2,F}$ term, we will prove that it has a positive upper bound $C = C(n,m,M,k_1,k_2)< \infty$.
By the inequality $2ab \leq a^2 + b^2$ and symmetry, we only need to estimate
    \begin{equation}\label{E: Jacobian in L^2 Coarea}
        \mc{E}\big[ \det[{(\nabla F (u_1))}^T (\nabla F(u_1) )] \ \big| \  F(u_1) = F(u_2) =  0 \big],
    \end{equation}
for $u_1,u_2 \in C_R$ and $u_1 \neq u_2$.
By similar tricks in (\ref{E: Independence for Conditional Expectation}), fix $i \in \{1, \dots , m\}$ and $j \in \{1 ,\dots , n \}$, we first need to find $a_{j,i} ^t$ and $b_{j,i} ^t$ such that for each $s = 1, \dots, m$,
    \begin{equation}\label{E: Orthogonal Condition 1}
        \mc{E}\big[ (\pa_j f_i (u_1) + \sum_{t=1} ^m a_{j,i} ^t f_t(u_1)+ b_{j,i} ^t f_t(u_2)) \cdot f_s(u_1)\big] = 0,
    \end{equation}
and
    \begin{equation}\label{E: Orthogonal Condition 2}
        \mc{E}\big[ (\pa_j f_i (u_1) + \sum_{t=1} ^m a_{j,i} ^t f_t(u_1)+ b_{j,i} ^t f_t(u_2)) \cdot f_s(u_2)\big] = 0.
    \end{equation}
To simplify the notations, we let $v = \pa_j f_i (u_1)$, $A = (a_{j,i}^1, \dots, a_{j,i}^m)$, $B = (b_{j,i}^1, \dots, b_{j,i}^m)$, $\mc{E}(F(u_1)v) = (\mc{E}(f_1(u_1)v) ,\dots, \mc{E}(f_m(u_1)v))$, $\mc{E}(F(u_2)v) = (\mc{E}(f_1(u_2)v) ,\dots, \mc{E}(f_m(u_2)v))$. Set $\mr{Cov}(x,y)$ as the $m\times m $ matrix with elements ${(\mr{Cov}(x,y))}_{i_1 i_2} = \mc{E}(f_{i_1}(x) f_{i_2}(y))$, and set $\Lambda(x,y)$ as the covariance kernel of the joint distribution of $(F(x),F(y))$, i.e.,
    \begin{equation}
	\Lambda(x,y)=
	\begin{bmatrix}
	  \mr{Cov}(x,x) & \mr{Cov}(x,y)\\
	  \mr{Cov}(y,x) & \mr{Cov}(y,y)
	\end{bmatrix}    .
    \end{equation}
$\Lambda(x,y)$ is invertible when $x \neq y$ by the $(A3)$ assumption.
Hence, the equations (\ref{E: Orthogonal Condition 1}) and (\ref{E: Orthogonal Condition 2}) are solvable, and we get 
    \begin{equation}\label{E: Solved Orthogonal Condition}
	    (A,B)  = - (\mc{E}(F(u_1)v), \mc{E}(F(u_2)v)) \cdot {(\Lambda (u_1,u_2))}^{-1}.
    \end{equation}
Notice that 
    \begin{equation}
        \det[{(\nabla F (u_1))}^T (\nabla F(u_1) )] \leq C(n,m) \cdot {||\nabla F ||}^{2m} ,
    \end{equation}
we see that the upper bound for (\ref{E: Jacobian in L^2 Coarea}) reduces to the upper bound for
    \begin{equation}
	    \mc{E}{(v + A\cdot F(u_1) + B \cdot F(u_2))}^{2m}.
    \end{equation}
Since it is the $2m$-th moment of a centered Gaussian variable, we only need to estimate its variance. By (\ref{E: Solved Orthogonal Condition}), we have that
    \begin{equation}
	    \begin{split}
            & \quad \mc{E}{( A\cdot F(u_1) + B \cdot F(u_2))}^{2} 
            \\ &= (\mc{E}(F(u_1)v), \mc{E}(F(u_2)v)) \cdot {(\Lambda (u_1,u_2))}^{-1} \cdot {(\mc{E}(F(u_1)v), \mc{E}(F(u_2)v))}^T.
        \end{split}
    \end{equation}
We will use Theorem~\ref{Thm: APP C 1} and consider the case $|u_1 - u_2| \leq \delta$ and the case $|u_1 - u_2 | > \delta$ separately, where $\delta$ is chosen in Theorem~\ref{Thm: APP C 1}. When $|u_1 - u_2| \leq \delta$,
by elementary row operations, 
    \begin{equation}
	\begin{split}
		\Lambda (u_1,u_2) & = \begin{bmatrix}
			\mr{Id}_m & 0\\
			\mr{Cov}(u_2,u_1) {\mr{Cov}(u_1,u_1)}^{-1} & {\mr{Id}}_m
		      \end{bmatrix}    
		\\	\quad &\cdot 
		      \begin{bmatrix}
			\mr{Cov}(u_1,u_1) & 0\\
			0 & \mr{Cov}(u_2,u_2) - \mr{Cov}(u_2,u_1) {\mr{Cov}(u_1,u_1)}^{-1}\mr{Cov}(u_1,u_2)
		      \end{bmatrix}  
		\\	\quad &\cdot 
		      \begin{bmatrix}
			\mr{Id}_m &  {\mr{Cov}(u_1,u_1)}^{-1}\mr{Cov}(u_1,u_2)\\
			0 & \mr{Id}_m
		      \end{bmatrix}  .
	\end{split}
    \end{equation}
Hence, 
    \begin{equation}\label{E: Bound Variance of Two Points}
	\begin{split}
		&\quad \mc{E}{( A F(u_1) + B  F(u_2))}^{2}  
        \\ &= (\mc{E}(F(u_1)v), \mc{E}(F(u_2)v) - \mc{E}(F(u_1)v) {\mr{Cov}(u_1,u_1)}^{-1} \mr{Cov}(u_1,u_2))
		\\	&\quad \cdot 
		      \begin{bmatrix}
			\mr{Cov}(u_1,u_1) & 0\\
			0 & \mr{Cov}(u_2,u_2) - \mr{Cov}(u_2,u_1) {\mr{Cov}(u_1,u_1)}^{-1}\mr{Cov}(u_1,u_2)
		      \end{bmatrix} ^{-1} 
		\\   &\quad \cdot 
		{(\mc{E}(F(u_1)v), \mc{E}(F(u_2)v) - \mc{E}(F(u_1)v) {\mr{Cov}(u_1,u_1)}^{-1} \mr{Cov}(u_1,u_2))}^T . 
	\end{split}
    \end{equation}
For the middle matrix in (\ref{E: Bound Variance of Two Points}), the $\mr{Cov}(u_1,u_1)$ block is good, because by $(A1)$ in Definition~\ref{D: Axiom 1}, $\mr{Cov}(u_1,u_1)$ is quantitatively non-degenerate. For the second $m \times m$ block, by Theorem~\ref{Thm: APP C 1},
the smallest eigenvalue of $\mr{Cov}(u_2,u_2) - \mr{Cov}(u_2,u_1) {\mr{Cov}(u_1,u_1)}^{-1}\mr{Cov}(u_1,u_2)$ is lowerly bounded by $(k_1/2) \cdot |u_1-u_2 |^2$. On the other hand, by the mean value theorem,
    \begin{equation}
	\big| \mc{E}(F(u_2)v) - \mc{E}(F(u_1)v) {\mr{Cov}(u_1,u_1)}^{-1} \mr{Cov}(u_1,u_2) \big|^2 \leq C(n,m,M) \cdot |u_1 - u_2|^2 .
    \end{equation}
Hence,  when $|u_1 - u_2| \leq \delta$,
    \begin{equation}
	\mc{E}{( A F(u_1) + B  F(u_2))}^{2} \leq C(n,m,M,k_1).
    \end{equation}
When $|u_1 - u_2| > \delta$, by $(A3)$ in Definition~\ref{D: Axiom 1}, we see that there is a positive constant $c = c(n,m,M,k_1,k_2)$ such that
    \begin{equation}
	    \det(\Lambda(u_1,u_2)) \geq c.
    \end{equation}
Since the norm of $(\mc{E}(F(u_1)v), \mc{E}(F(u_2)v))$ is bounded by a constant $C = C(n,m,M)>0$, we get that
    \begin{equation}
	\mc{E}{( A F(u_1) + B  F(u_2))}^{2} \leq C(n,m,M,k_1,k_2).
    \end{equation}
Combine with the fact that $\mc{E}(v^2) \leq C(n,m,M)$, we can then prove that 
    \begin{equation}
        \mc{E}{(v + A\cdot F(u_1) + B \cdot F(u_2))}^{2}\leq C(n,m,M,k_1,k_2).
    \end{equation}
Hence, by the previous arguments,
    \begin{equation}
        J_{2,F} \leq C(n,m,M,k_1,k_2).
    \end{equation}
Combine this with $(A4)$ in Definition~\ref{D: Axiom 1}, i.e.,
    \begin{equation}
        p_{(F(u_1), F(u_2))} (0) \leq 
        \begin{cases}
            \frac{k_2}{|u_1-u_2|^m} ,& \ \text{if } |u_1-u_2| \leq 1, \\
            & \\
            k_2 ,& \ \text{if } |u_1-u_2| >1,
        \end{cases}
    \end{equation}
we get that, for $R > 1$, 
    \begin{equation}
        \begin{split}
            & \quad \mc{E}\bigg({\big(\mc{H}^{n-m}(F^{-1}(0) \cap C_R)\big)}^{2} \bigg) 
        \\    &\leq C(n,m, M,k_1,k_2) \cdot R^{n} \cdot (\int_0 ^1 r^{n-1-m} \ dr+ \int_1 ^{2R} r^{n-1 } \ dr )
        \\  &\leq C(n,m, M,k_1,k_2) \cdot R^{2n }.
        \end{split}
    \end{equation}
 
Now, combine estimates for (\ref{E: Bound Term 1}) and (\ref{E: Bound Term 2}), we can then estimate $\mc{E}({(N(R;F))}^q)$ with some $q  = q(n,m)>1$ but close to 1 by (\ref{E: Bound Components Number 3}), and finally obtain that for all $R >1$,
    \begin{equation}
        \mc{E}({(N(R;F))}^q) \leq C(n,m,  M, k_1,k_2) \cdot R^q.
    \end{equation}

\end{proof}

In Theorem~\ref{Thm: L^q Bound on Number}, we need to add an additional condition $(A3)$ to insure the nondegeneracy of $F$ at two different points, so that we can get $L^q$-bounds for the number and Betti numbers of connected components. In order to get a uniform control on the family ${\{F_L\}}_{L \in \mc{L}}$, we need to add the tame conditions in Definition~\ref{D: Axiom 3}, with which we can obtain the proof of our Theorem~\ref{Thm: Global Limit}.
Notice that, if ${\{F_L\}}_{L \in \mc{L}}$ satisfies $(B4)$ for some $k_2(x)$ at $x \in U$ in Definition~\ref{D: Axiom 3}, then its translation invariant limit $F_x$ also satisfies $(A3)$ in Definition~\ref{D: Axiom 1} for this $k_2(x)$ and for any $R > 1$.

\begin{proof}[Proof of (1) in Theorem~\ref{Thm: Global Limit}]
	Assume that $x \in Q \cap U'$ for some compact subset $Q \subset U$, then $F_x$ satisfies \textit{ $(R;M(Q),k_1(Q),k_2(x))$-assumptions} on $C_{R+1}$ for each $R >0$. By Theorem~\ref{Thm: L^1 Bound on Number},
		\begin{equation}
			\bar{\nu}(x)  =\lim_{R \to \infty} \frac{\mc{E}(N(R;F_x))}{|C_R|} \leq D_1(n,m,M(Q),k_1(Q)).
		\end{equation}
	The measurability is directly if we consider the function
		\begin{equation}
			\nu_{R,L}(x,w) \equiv \frac{N(R;F_{x,L})}{|C_R|},
		\end{equation}
	defined on $U_{-(R+1)/L} \times \Omega'$ for $U_{-r} \equiv \{z \in U \ | \ \dist(z,\pa U) > r\}$ and $\Omega' = \{w \in \Omega \ | \ F_L \in C_* ^1 (\mb{R}^n,\mb{R}^m) \text{ for all } L \}$. Notice that $\mc{P}(\Omega \backslash \Omega') = 0$ by Bulinskaya's lemma. The measurability of $\nu_{R,L}(x,w)$ follows from the measurability 
	of compositions of lower semicontinuous and measurable maps. 
	
	Then, we claim that for any fixed $x \in Q \cap U'$,
		\begin{equation}\label{E: Double Limit in L^1}
			\lim_{R \to \infty} \limsup_{L \to \infty} \mc{E}(|\nu_{R,L}(x, \cdot) - \bar{\nu}(x)|) = 0,
		\end{equation}
	which is a stronger version than Lemma~\ref{L: Double Limit in Probability}. 
    The measurability of $\bar{\nu}(x)$ follows from the Fubini-Tonelli theorem. First, we notice that for each $x \in Q \cap U'$ and each $R >0$, when $L > L_0(R,x)$, Theorem~\ref{Thm: L^q Bound on Number} gives that
		\begin{equation}
			\mc{E}({(\nu_{R,L}(x, \cdot))}^q) \leq D_2
		\end{equation}
	for some $q  = q(n,m)>1$ and $D_2 = D_2(n,m,M(Q),k_1(Q), k_2(x))< \infty$. On the other hand, given any $\epsilon > 0$, Lemma~\ref{L: Double Limit in Probability} tells us that
		\begin{equation}
			\lim_{R \to \infty} \limsup_{L \to \infty} \mc{P}(\Omega_\epsilon) = 0,
		\end{equation}
	where $\Omega_\epsilon \equiv \{ w \in \Omega' \ | \ |\nu_{R,L}(x,w) - \bar{\nu}(x) | > \epsilon\}$. Hence, when $L > L_0(R,x)$, by the H\"{o}lder inequality,
		\begin{equation}
			0 \leq \mc{E}(|\nu_{R,L}(x, \cdot) - \bar{\nu}(x)|) \leq \epsilon + {(\mc{P}(\Omega_\epsilon))}^{1- \frac{1}{q}} \cdot C(n,m,D_1,D_2).
		\end{equation}
	Since $D_1,D_2$ are independent of $R,L$, we can let $L \to \infty$ first, and then let $R \to \infty$, and finally let $\epsilon \to 0$. Hence, we finish the proof of the claim.
\end{proof}
\begin{proof}[Proof of (2) in Theorem~\ref{Thm: Global Limit}]
	The proof follows similar strategies in~\cite{NS16,SW19} when $m=1$, but we still need to modify it by our earlier theorems and lemmas for $m>1$. We include the proof adapting to our settings here. 
    We can assume that $\varphi \geq 0$ and set $Q = \spt(\varphi)$. Fix an arbitrary $\delta \in (0,1)$ such that $Q_{+4\delta} \subset U$ and we denote $Q_1 = Q_{+ \delta}$, $Q_2 = Q_{ + 2\delta}$, where $Q_{+\delta} \equiv \{z \in \mb{R}^n \ | \ \dist(z,Q) \leq \delta \}$.
	For any $x \in Q_1$, let $\varphi_{-}(x) \equiv \inf_{C_\delta(x)} \varphi$ and $\varphi_{+}(x) \equiv \sup_{C_\delta(x)} \varphi$, where we recall that $C_\delta(x)$ is the shift from the open cube $C_\delta(0)$ to the center $x$. Then, for parameters $T, R ,L$, with $1 < T < R < \delta L$, we have that $\varphi_{-}(x) \leq \varphi(y) \leq \varphi_{+}(x)$ for any $y \in C_{R/L}(x)$. 
    Then, we obtain that
		\begin{equation}\label{E: Global Limit Sandwich 1}
			\begin{split}
				\int_{Q_1} \varphi_{-}(x) \nu_{R,L}(x,w) \ dx &\leq \int_{Q_1} \varphi_{-}(x) \frac{n_L(C_{R/L}(x))}{|C_R|} \ dx
				\\	&\leq \int_{Q_1} \int_{C_{R/L}(x)} \frac{\varphi(y)}{|C_R|} \ dn_L(y) \ dx=\frac{1}{L^n} \int_U \varphi(y)  \ dn_L(y)
				\\	&\leq \int_{Q_1} \varphi_{+}(x) \frac{n_L(C_{R/L}(x))}{|C_R|} \ dx.
			\end{split}
		\end{equation}
	Then, we cover $Q_2$ with $C(n) \cdot |Q_2| \cdot {(L/T)}^n$ open cubes with side lengths $ (T/L)$, and we deonte these cubes as $\{C_j\}$. For example, one can consider an approximation of $Q_2$ by dyadic cubes.
    For each component of $Z(F_L) \cap C_{R/L}(x)$ that does not intersect with the boundaries of any $C_j$, it is fully contained in one $C_j$. Hence, it is fully contained in $C_{(R+T)/L}(x)$.
	For those components of $Z(F_L)$ that intersect with at least one $C_j$, the number of them are bounded by $\sum_j \mfk{N}_\# ( C_j; F_L)$. Recall that, by Bulinskaya's lemma and the discussions after the definition of $\mfk{N}_\# (R;G)$ in (\ref{E: Definition of Residue Parts}), for each $j$, it is in probability $1$ that $\mfk{N}_\# ( C_j; F_L)$ is finite and $\mc{E}(\mfk{N}_\# ( C_j; F_L)) \leq C(n,m,M(Q),k_1(Q)) \cdot {(T)}^{n-1}$ when $L$ is large.
	Hence, 
		\begin{equation}\label{E: Global Limit Sandwich 2}
			\begin{split}
				\int_{Q_1} \varphi_{+}(x) \frac{n_L(C_{R/L}(x))}{|C_R|} \ dx &\leq {\bigg(\frac{R+T}{R}\bigg)}^n \cdot \int_{Q_1} \varphi_{+}(x) \nu_{R+T,L}(x,w) \ dx 
				\\	& \quad + (\sup_U \varphi) \cdot L^{-n} \cdot (\sum_j \mfk{N}_\# ( C_j; F_L)).
			\end{split}
		\end{equation}
	For the $\int_U \varphi(x) \bar{\nu}(x) dx$ term, let $\omega_\varphi(\cdot)$ be the modulus of continuity of $\varphi$,
		\begin{equation}
			\begin{split}
				\int_U \varphi(x) \bar{\nu}(x) dx &\geq {\bigg(\frac{R + T}{R} \bigg)}^n \cdot \int_{Q_1} \varphi_{+}(x) \bar{\nu}(x) \ dx 
            \\	& \quad    - C(n) \cdot (T/R) \cdot (\sup_U \varphi) \cdot ||\bar{\nu}||_{L^\infty(Q_1)} \cdot |Q_1|
			     - \omega_\varphi(\delta) \cdot ||\bar{\nu}||_{L^\infty(Q_1)} \cdot |Q_1|.
			\end{split}
		\end{equation}
	Combine with (\ref{E: Global Limit Sandwich 1}) and (\ref{E: Global Limit Sandwich 2}), we see that
		\begin{equation}
			\begin{split}
				& \quad \frac{1}{L^n} \int_U \varphi(y)  \ dn_L(y) - \int_U \varphi(x) \bar{\nu}(x) dx 
            \\  &\leq {\bigg(\frac{R+T}{R}\bigg)}^n \cdot (\sup_U \varphi) \cdot \int_{Q_1} \big|\nu_{R+T,L}(x,w) - \bar{\nu}(x) \big|\ dx 
			\\	& \quad + C(n) \cdot (T/R) \cdot (\sup_U \varphi) \cdot ||\bar{\nu}||_{L^\infty(Q_1)} \cdot |Q_1| + \omega_\varphi(\delta) \cdot ||\bar{\nu}||_{L^\infty(Q_1)} \cdot |Q_1| 
			\\	& \quad + (\sup_U \varphi) \cdot L^{-n} \cdot (\sum_j \mfk{N}_\# ( C_j; F_L)).
			\end{split}
		\end{equation}
	and 
		\begin{equation}
			\begin{split}
				& \quad  \frac{1}{L^n} \int_U \varphi(y)  \ dn_L(y) - \int_U \varphi(x) \bar{\nu}(x) dx 
            \\  &\geq -(\sup_U \varphi) \cdot \int_{Q_1} \big|\nu_{R,L}(x,w) - \bar{\nu}(x) \big|\ dx 
			     - \omega_\varphi (\delta) \cdot ||\bar{\nu}||_{L^\infty(Q_1)} \cdot |Q_1| .
			\end{split}
		\end{equation}
	Hence,
		\begin{equation}\label{E: Global Limit Sandwich 3}
			\begin{split}
				& \quad \mc{E}^* \bigg[ \bigg |  \frac{1}{L^n} \int_U \varphi(x)  dn_L(x) - \int_U \varphi(x) \bar{\nu}(x) dx \bigg | \bigg] 
            \\  &\leq 2^n \cdot (\sup_U \varphi) \cdot \int_{Q_1} \mc{E}\big(\big|\nu_{R+T,L}(x,\cdot) - \bar{\nu}(x) \big| \big)\ dx 
			\\	& \quad + (\sup_U \varphi) \cdot \int_{Q_1} \mc{E} \big(\big|\nu_{R,L}(x,w) - \bar{\nu}(x) \big| \big)\ dx + (\sup_U \varphi) \cdot L^{-n} \cdot (\sum_j \mc{E}\big( \mfk{N}_\# ( C_j; F_L) \big)  )
			\\	& \quad + C(n) \cdot (T/R) \cdot (\sup_U \varphi) \cdot ||\bar{\nu}||_{L^\infty(Q_1)} \cdot |Q_1| + \omega_\varphi(\delta) \cdot ||\bar{\nu}||_{L^\infty(Q_1)} \cdot |Q_1| .
			\end{split}
		\end{equation}
	Notice that, by $(B1)$ and $(B2)$ in Definition~\ref{D: Axiom 2}, for fixed $R>T>1$, there is a uniform $L_1 = L_1(R)>0$, such that when $L> L_1$,
        \begin{equation}
            \mc{E} \big(\big|\nu_{R+T,L}(x,w) - \bar{\nu}(x) \big| \big) \leq  D_1 (n,m,M(Q),k_1(Q)),
        \end{equation}
    for every $x \in Q_1$ by Theorem~\ref{Thm: L^1 Bound on Number}. 
    Hence, the function
		\begin{equation}
			\eta_R(x) \equiv \limsup_{L \to \infty} \mc{E} \big(\big|\nu_{R,L}(x,w) - \bar{\nu}(x) \big| \big)
		\end{equation}
	has the same upper bound $D_1$. Recall that the claim (\ref{E: Double Limit in L^1}) in proving part (1) of Theorem~\ref{Thm: Global Limit} shows that $\lim_{R \to \infty} \eta_R(x) = 0$. Hence, apply $ \limsup_{L \to \infty}$ first and then apply $\lim_{R \to \infty}$, 
	the first two terms in (\ref{E: Global Limit Sandwich 3}) go to $0$ by the dominated convergence theorem. The fourth term will also go to $0$ since $T < R$ is still fixed at this step.
	As previously mentioned, since ${\{F_L\}}_{L \in \mc{L}}$ satisfies $(B1)$ and $(B2)$ in Definition~\ref{D: Axiom 2}, when $L >L_1(R)$, for each $j$, $ \mc{E}(\mfk{N}_\# ( C_j; F_L)) \leq C(n,m,M(Q),k_1(Q)) \cdot {(T)}^{n-1}$. 
	So, the third term is bounded by $(\sup_U \varphi ) \cdot C(n,m,M(Q) , k_1(Q) ) \cdot  T^{-1}$ for any fixed $T  < R < \delta L$. We now let $T \to \infty$ and finally let $\delta \to \infty$, we can see that the right hand side of (\ref{E: Global Limit Sandwich 3}) goes to $0$. 
    Hence, we can finish the proof.
\end{proof}

The proof of Theorem~\ref{Main Results: Global 2} is the same as the proof of Theorem 3 in~\cite{NS16} after we finished Theorem~\ref{Thm: Global Limit} as above. Therefore, we omit the details.

Instead, let us explore the general Betti numbers of tame parametric ensembles ${\{F_L\}}_{L \in \mc{L}}$ defined on an $n$-dimensional closed $C^3$-manifold $X$. 
Recall that we say that a parametric Gaussian ensemble ${\{F_L\}}_{L \in \mc{L}}$ is tame on $X$ if for every $C^3$-chart $\pi: U \subset \mb{R}^n \to X$, ${\{F_L \circ \pi \}}_{L \in \mc{L}}$ is tame on $U$ as in Definition~\ref{D: Axiom 3}.

For each $l = 0, \dots,n-m$, we will use $\beta_l(F_L)$ to denote the summation of $l$-th Betti numbers over $\mb{R}$ of all connected components of $Z(F_L)$ in $X$.
For each $x \in X$, let $F_x$ be the translation invariant local limit of $\{F_L\}$ at $x$. By Theorem~\ref{Main Results: Local Betti}, we get a limiting constant $\nu_{l;F_x}$. We then define a function $\bar{\nu}_l(x) = \nu_{l;F_x}$ on $X$. 
The following theorem also holds true if $X$ is noncompact and without boundary, but one needs to modify the definition of $\beta_l(F_L)$ into the summation of Betti numbers of those connected components that are fully contained in a geodesic ball with a finite radius in $X$.

\begin{theorem}\label{Thm: Global Betti}
    \textit{
        Assume that ${\{F_L\}}_{L \in \mc{L}}$ is tame on $X$ and $X$ is closed. Then, for each $l = 0, \dots, n-m$, the function $x \mapsto \bar{\nu}_l(x)$ is measurable and bounded on $X$, and there is a positive constant $C_l = C_l(n,m,M,k_1)$, such that
        \begin{equation}
            \int_{X} \bar{\nu}_l(x) \ d \vol( x) \leq \liminf_{L \to \infty}\frac{\mc{E}(\beta_l(F_L))}{L^n} \leq \limsup_{L \to \infty}\frac{\mc{E}(\beta_l(F_L))}{L^n} \leq C_l \cdot |X|,
        \end{equation}
    where $|X|$ is the volume of $X$, and the two positive constants $M = M(X)$ and $k_1 = k_1(X)$ are chosen in Definition~\ref{D: Axiom 2}.
    }
\end{theorem}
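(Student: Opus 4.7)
The plan is to handle the two inequalities separately: the lower bound via a Riemann-sum/Fatou argument using the integral sandwich Lemma~\ref{L: Integral Sandwich Betti} and the local double limit of Remark~\ref{Rmk: Smooth Double Limit}, and the upper bound via a Chern--Lashof-type curvature integral on $X$ computed by Kac--Rice as in the proof of Theorem~\ref{Thm: L^1 Bound on Betti}. Measurability of $\bar{\nu}_l(x)$ follows the pattern of Theorem~\ref{Thm: Global Limit}(1): for each fixed $R,L$, the function $x \mapsto \mc{E}(\beta_l(R;F_{x,L}))/|C_R|$ is measurable (by Fubini--Tonelli, together with the lower semicontinuity of $\beta_l$ on $C^1_*$), and $\bar{\nu}_l(x)$ is recovered as its iterated limit $L\to\infty$ then $R\to\infty$; boundedness on $X$ follows from Theorem~\ref{Thm: L^1 Bound on Betti} applied to the rescaled field $F_{x,L}$, which yields $\bar{\nu}_l(x) \leq \widetilde{D}_1(n,m,M(X),k_1(X))$ uniformly.

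For the lower bound, fix $R > 0$ and use a finite coordinate atlas on $X$ to place a maximal \emph{disjoint} family of coordinate cubes $\{C_{R/L}(x_j)\}$ of side $2R/L$; these fill $X$ up to a boundary region of volume $O(1/L)$. Because the cubes are disjoint, every connected component of $Z(F_L)$ fully contained in some $C_{R/L}(x_j)$ lies in a unique such cube, so summing local Betti contributions,
\begin{equation*}
\beta_l(F_L) \;\geq\; \sum_j \beta_l(x_j, R/L; F_L) \;=\; \sum_j \beta_l(R; F_{x_j, L}),
\end{equation*}
the last equality coming from the rescaling $u \mapsto L^{-1}u$. Dividing by $L^n$ and noting $|C_{R/L}(x_j)| = |C_R|/L^n$, the right-hand side is recognized as a Riemann sum $\sum_j |C_{R/L}(x_j)| \cdot \mc{E}(\beta_l(R; F_{x_j, L}))/|C_R|$ for the integral $\int_X \mc{E}(\beta_l(R; F_{x,L}))/|C_R|\, d\vol(x)$. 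Taking $\liminf$ as $L \to \infty$ at fixed $R$, Fatou's lemma (justified by the uniform $L^\infty$ bound from Theorem~\ref{Thm: L^1 Bound on Betti}) gives $\liminf_L \mc{E}(\beta_l(F_L))/L^n \geq \int_X \mc{E}(\beta_l(R; F_x))/|C_R|\, d\vol(x)$, where the pointwise inner limit uses distributional convergence $F_{x,L} \to F_x$ together with the uniform moment control on local Betti numbers. Letting $R \to \infty$ and using Theorem~\ref{Main Results: Local Betti}, which gives $\mc{E}(\beta_l(R; F_x))/|C_R| \to \bar{\nu}_l(x)$, together with dominated convergence, concludes the lower bound.

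For the upper bound, isometrically embed $X \hookrightarrow \mb{R}^N$ (Nash); each component $\gamma$ of $Z(F_L)$ is then a closed submanifold of $\mb{R}^N$, and Theorem~\ref{Thm: Chern} applied in $\mb{R}^N$, combined with the pointwise estimate $\|\II_\gamma^{\mb{R}^N}\| \leq \|\II_\gamma^X\| + \|\II_X^{\mb{R}^N}\|$ (the second summand uniformly bounded by $C(X)$ since $X$ is compact), yields
\begin{equation*}
\sum_{l=0}^{n-m} \beta_l(F_L) \;\leq\; C(X) \int_{Z(F_L)} \bigl( \|\II_{Z(F_L)}^X\|^{n-m} + 1 \bigr)\, d\mc{H}^{n-m}.
\end{equation*}
Cover $X$ by finitely many $C^3$ coordinate charts; within each chart the intrinsic second fundamental form is expressible via $\nabla F_L$ and $\nabla^2 F_L$ through the formulas of Appendix~\ref{APP: Computation}, modified by Christoffel-symbol corrections arising from the Riemannian metric on $X$. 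A Kac--Rice computation on $X$ reduces the expectation to an integral of the same form as (\ref{E: Kac-Rice in Curvature}); the tame conditions $(B1),(B2)$ ensure that the rescaled field $F_{x,L}$ satisfies the $(R;M,k_1)$-assumptions uniformly, so Lemma~\ref{L: Singular Integral} and the conditional-expectation estimates from the proof of Theorem~\ref{Thm: L^1 Bound on Number} give the uniform bound $\mc{E}\bigl(\int_{Z(F_L)} \|\II\|^{n-m}\, d\mc{H}^{n-m}\bigr) \leq C_l \cdot L^n \cdot |X|$, and dividing by $L^n$ yields the stated inequality. The principal technical obstacle is verifying that the Christoffel-symbol corrections in $\|\II_\gamma^X\|$ and in the pullback covariance kernel do not disrupt the $L$-scaling: in the rescaled coordinates around each $x \in X$ these corrections are of size $O(L^{-1})$ relative to the leading Euclidean terms, so they are absorbable into the constants $M(X), k_1(X)$ of Definition~\ref{D: Axiom 2}, which is the point that must be checked carefully so that the Section~\ref{Section: L^1} estimates transfer without loss.
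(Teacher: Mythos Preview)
Your overall strategy matches the paper's: a local double-limit argument for the lower bound and a global Chern--Lashof/Kac--Rice computation via Nash embedding for the upper bound.

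There are two places where your lower-bound argument is looser than the paper's. First, you use a discrete Riemann sum over a disjoint packing $\{C_{R/L}(x_j)\}$ whose centers $x_j$ vary with $L$; passing to $\liminf_L$ through such an $L$-dependent sum requires some uniformity in $x$ that you do not supply. The paper avoids this by working directly with the continuous integral: on a fixed compact $Q$ in a chart it proves
\[
\int_Q \frac{\beta_l(R;(F\circ\pi)_{y,L})}{|C_R|}\,dy \;\leq\; \frac{\beta_l(\pi(U);F_L)}{L^n}
\]
via the indicator-function computation $\int_Q \chi_{\{\gamma\subset C_{R/L}(y)\}}\,dy\le |C_{R/L}|$, which is a pointwise inequality in $\omega$ and needs no continuity in $y$. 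Second, your claim that $\mc{E}(\beta_l(R;F_{x,L}))\to \mc{E}(\beta_l(R;F_x))$ from ``distributional convergence together with uniform moment control'' skips a step: the paper upgrades the convergence-in-probability of Remark~\ref{Rmk: Smooth Double Limit} to $L^1$-convergence using the $L^q$ bound of Theorem~\ref{Thm: L^q Bound on Betti} (this is where assumption $(B4)$ enters), and then takes $\lim_R\limsup_L$ before integrating over $Q$ with dominated convergence. Your invocation of Theorem~\ref{Thm: L^1 Bound on Betti} alone gives only an $L^1$ bound, not the uniform integrability needed for this passage.

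For the upper bound you are essentially aligned with the paper, and your ``principal technical obstacle'' is resolved there as follows: one takes normal coordinates at each $x\in X$, so that at that point $\nabla^X,\ (\nabla^X)^2$ coincide with the Euclidean gradient and Hessian, and then the rescaling $F_{x,L}(u)=F_L(\pi(x+L^{-1}u))$ produces exact factors of $L$ rather than $O(L^{-1})$ corrections. The embedding-dependent constant $C(n,m,X)$ coming from $\|\II^X\|$ enters only through a term of order $L^m$ (it multiplies $\mc{E}[|\det A_{x,L}|^{1/2}\mid F_{x,L}=0]$), while the intrinsic curvature term scales as $L^n$; hence after dividing by $L^n$ the $C(n,m,X)$ contribution vanishes and the final constant $C_l$ depends only on $n,m,M,k_1$ and not on the choice of Nash embedding.
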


\begin{proof}[Proof of the lower bound in Theorem~\ref{Thm: Global Betti}]
    The proof for measurability and local boundedness of $\bar{\nu}_l(x)$ is the same as part (1) of Theorem~\ref{Thm: Global Limit} because one can verify these properties on local charts. Since $X$ is closed, $\bar{\nu}_l$ is actually bounded on $X$.
    One thing we can similarly get is the following $L^1$-convergence. Choose a local chart $\pi : U \to X$ with a bounded open set $U \subset \mb{R}^n$. For $x \in \pi(U)$ and $y = \pi^{-1}(x)$, we define a random function
        \begin{equation}
            \nu_{l;R,L}(y,w) \equiv \frac{\beta_l(R;{(F\circ \pi)}_{y,L})}{|C_R|} ,
        \end{equation}
    for $w \in \Omega$. Here ${(F\circ \pi)}_{y,L}$ is the pull-back of $F_L$ from $\pi(U)$ to $U$ and is rescaled, i.e.,
        \begin{equation}
            {(F\circ \pi)}_{y,L}(u) \equiv F_L(\pi(y + L^{-1}u)),
        \end{equation}
    for $u \in \mb{R}^n$ with $y + L^{-1}u \in U$.
    Hence, $\beta_l(R;{(F\circ \pi)}_{y,L}) = \beta_l(C_{R/L}(y); F_L \circ \pi)$.

    Then, using Theorem~\ref{Thm: L^q Bound on Betti}, we can similarly get, as in the claim (\ref{E: Double Limit in L^1}) in proving part (1) of Theorem~\ref{Thm: Global Limit}, that
        \begin{equation}\label{E: Double Limit in L^1 for Betti}
            \lim_{R \to \infty} \limsup_{L \to \infty} \mc{E}(|\nu_{l;R,L}(y, \cdot) - \bar{\nu}_l (x) \sqrt{\det{g_y}}|) = 0.
        \end{equation}
    Here $g_y$ is the metric tensor of $X$ at $ x = \pi(y)$, which is compatible with change of coordinates to add this determinant term here, because we  use $|C_R|$ in standard $\mb{R}^n$ volume to define $\nu_{l;R,L}(y,w) $. For more discussions on change of coordinates, see Section 9 of~\cite{NS16}.

    Recall that $\beta_l(R;{(F\circ \pi)}_{y,L})$ only counts those connected components that are fully contained in the cube $C_R$.
    We set parameters $R,L$ with $1 <R < \delta L$ for some $\delta \in (0,1)$, and let $Q \subset U$ be a compact subset such that $Q_{+ \delta} \subset U$ but $Q_{+2 \delta}  \nsubseteq U$.
    We have that
        \begin{equation}\label{E: Betti Global Lower Bound}
            \begin{split}
                &\quad \int_Q \nu_{l;R,L}(y,w) \ dy =\int_Q \frac{\beta_l(C_{R/L}(y);F_L \circ \pi)}{|C_R|} \ dy
                \\  & = \frac{1}{|C_R|} \int_Q \sum_{\gamma \subset Z(F_L \circ \pi)} \chi_{\gamma \subset C_{R/L}(y)}  \cdot \beta_l(\gamma) \ dy
                \\  & \leq \frac{1}{|C_R|} \sum_{\gamma \subset Z(F_L \circ \pi) \cap U}\ \beta_l(\gamma) \int_Q  \chi_{\gamma \subset C_{R/L}(y)} \ dy
                \\  & \leq \frac{|C_{R/L}|}{|C_R|} \sum_{\gamma \subset Z(F_L \circ \pi) \cap U}\ \beta_l(\gamma)
                 = \frac{\beta_l(U;F_L \circ \pi)}{L^n} = \frac{\beta_l(\pi(U);F_L) }{L^n}.
            \end{split}
        \end{equation}
    Here, $\chi$ is the indicator function, and we use $\beta_l(\pi(U);F_L)$ to denote the summation of $l$-th Betti numbers of all connected components of $Z(F_L)$ fully contained in the open set $\pi(U)$. 
    The last inequality is because
        \begin{equation}
            \int_Q  \chi_{\gamma \subset C_{R/L}(y)} \ dy=  |\{y \in Q \ \vert \ y \in \cap_{z \in \gamma} C_{R/L}(z) \}| \leq |C_{R/L}|.
        \end{equation}
    Then,
        \begin{equation}
            \begin{split}
                \int_{Q} \bar{\nu}_l(\pi(y)) \sqrt{\det{g_y}} \ dy &\leq \int_Q \mc{E}( \nu_{l;R,L}(y,\cdot) )\ dy + \int_Q \mc{E}(|\nu_{l;R,L}(y, \cdot) - \bar{\nu}_l (x) \sqrt{\det{g_y}}|) \ dy.
                \\  &\leq \frac{\mc{E}(\beta_l(\pi(U);F_L))}{L^n} + \int_Q \mc{E}(|\nu_{l;R,L}(y, \cdot) - \bar{\nu}_l (x) \sqrt{\det{g_y}}|) \ dy.
            \end{split}
        \end{equation}
    Use (\ref{E: Double Limit in L^1 for Betti}), and take limits $\lim_{R \to \infty} \liminf_{L \to \infty}$ at both sides. Notice that it is $\limsup_{L \to \infty}$ in (\ref{E: Double Limit in L^1 for Betti}). Hence, we can apply the dominated convergence theorem to the second term of the right hand side, which is dominated by the constant in Theorem~\ref{Thm: L^1 Bound on Betti}.
    We then get that
        \begin{equation}
            \int_{\pi(Q)} \bar{\nu}_l(x) \ d \vol (x) = \int_{Q} \bar{\nu}_l(\pi(y)) \sqrt{\det{g_y}} \ dy \leq \liminf_{L \to \infty} \frac{\mc{E}(\beta_l(\pi(U);F_L))}{L^n}.
        \end{equation}
    Since $\delta$ and $Q$ were chosen arbitrarily, we see that
        \begin{equation}
            \int_{\pi(U)} \bar{\nu}_l(x) \ dx \leq \liminf_{L \to \infty} \frac{\mc{E}(\beta_l(\pi(U);F_L))}{L^n}.
        \end{equation}
    Finally, consider a triangulation of $X$ with finite partitions ${\{\pi_j(U_j)\}}_{j=1} ^{J}$ such that each $(U_j , \pi_j)$ is a local chart on $X$.
    Then, 
        \begin{equation}
            \int_{X} \bar{\nu}_l(x) \ dx = \sum_{j=1}^J \int_{\pi(U_j)} \bar{\nu}_l(x) \ dx  \leq \liminf_{L \to \infty}  \sum_{j=1}^J  \frac{\mc{E}(\beta_l(\pi(U_j);F_L))}{L^n} \leq \liminf_{L \to \infty}   \frac{\mc{E}(\beta_l(X;F_L))}{L^n}.
        \end{equation}
\end{proof}

For the upper bound in Theorem~\ref{Thm: Global Betti}, the methods we used for Theorem~\ref{Thm: Global Limit} do not work, because one cannot omit those giant components that intersect with boundaries of small cubes $\{C_j\}$ now. So, we do not know whether limits exist for Betti numbers cases. In fact, those giant components exist in many known concrete examples.
Readers can see more discussions on page 6 of~\cite{W21} and references therein.

Here, we use the Chern-Lashof inequality in Theorem~\ref{Thm: Chern} to calculate a global upper bound directly.

\begin{proof}[Proof of the upper bound in Theorem~\ref{Thm: Global Betti}]
    We remark that this upper bound result actually holds true for uniformly controllable Gaussian ensembles ${\{F_L\}}_{L \in \mc{L}}$, i.e., for ${\{F_L\}}_{L \in \mc{L} }$ only satisfying Definition~\ref{D: Axiom 2}.
    
    Since $X$ is a $C^3$-manifold, let $\Phi:X \to \mb{R}^{n+q}$ for some $q > 0$ be an isometric embedding, which follows from Nash's embedding theorem. 
    Then, we can regard $X$ as a submanifold of $\mb{R}^{n+q}$ and also regard $Z(F_L) = F_L ^{-1}(0)$ as finitely many closed submanifolds of $\mb{R}^{n+q}$.
    According to Theorem~\ref{Thm: Chern}, similar to the proofs of Theorem~\ref{Thm: L^1 Bound on Number} and Theorem~\ref{Thm: L^1 Bound on Betti} (see (\ref{E: Bound Components Number})), one can get that
        \begin{equation}
            \beta_l(X;F_L) \leq C(n,m) \cdot \int_{Z(F_L)} || \II^{Z(F_L)} (x)||^{n-m} \ d \mc{H}^{n-m}(x).
        \end{equation}
    Here, the second fundamental form is the one for $Z(F_L)$ as submanifolds of $\mb{R}^{n+q}$. Then, by Theorem~\ref{Thm: APP Curvature}, 
        \begin{equation}
            \beta_l(X;F_L) \leq C(n,m) \int_{Z(F_L)} C(n,m,X) + ||A_L ^{-1}||^{\frac{(n-m)}{2}} \cdot || {(\nabla^X)}^2 F_L ||^{(n-m)} \ d \mc{H}^{n-m}(x),
        \end{equation}
    where $C(n,m,X)$ is a constant depending on $X$ and the embedding map $\Phi$, and $A_L$ is an $m \times m$ matrix, whose elements are ${(A_L)}_{\alpha \beta} = \langle (\nabla^X f_{L,\alpha}) , (\nabla^X f_{L,\beta}) \rangle_X$. We also use $\nabla^X$ to denote gradients on $X$, use ${(\nabla^X)}^2$ to denote Hessians on $X$, and use $\langle \cdot , \cdot \rangle_X$ to denote the Riemannian metric tensor on $X$.
    One can use a similar trick in (\ref{E: Bound Mean Curvature}) and then can obtain an upper bound to $||A_L ^{-1}||$ by $|\det A_L | ^{-1} \cdot ||\nabla^X F||^{2(m-1)}$. So, again, we can let the Gaussian field be
        \begin{equation}
            W_L = (\nabla^X F_L , {(\nabla^X)}^2 F_L )
        \end{equation}
    in the Kac-Rice formula, Theorem~\ref{Thm: Kac-Rice}, and also set 
        \begin{equation}
           \begin{split}
            & \quad h(x,W_L(x)) 
            \\ &=  C(n,m,X) + \big|\det A_L (x) \big|^{-\frac{(n-m)}{2}} ||\nabla^X F_L(x)||^{(n-m)(m-1)} {||{(\nabla^X)}^2 F_L(x) ||}^{(n-m)}.
           \end{split}
        \end{equation}
    Hence, 
        \begin{equation}
            \begin{split}
                \mc{E}(\beta_l(X;F_L)) &\leq \mc{E} \bigg(  \int_{Z(F_L)} h(x,W_L(x))\ d \mc{H}^{n-m}(x) \bigg) 
            \\  &= \int_{X} \mc{E} \big[ h(x,W_L(x)) \cdot |\det A_L (x) |^{1/2} \ \big| \ F_L(x) = 0\big] \cdot p_{F_L(x)}(0)\ d \vol (x).
            \end{split}
        \end{equation}
    The term $p_{F_L(x)}(0)$ is bounded by a constant $C = C(n,m,k_1,M)>0$ according to $(B1)$ and $(B2)$ in Definition~\ref{D: Axiom 2} with positive $M = M(X)$ and $k_1 = k_1(X)$.
    So, let us estimate the conditional expectation term at each $x \in X$.
    For a local chart $(U,\pi)$ around $x \in \pi(U)$, after choosing local normal coordinates $(u_1, \dots, u_n)$ around $x = 0$, the $\nabla^X $ and ${(\nabla^X)}^2$ are just the usual gradient and the usual Hessian at $x=0$. Then, we rewrite them with the rescaled Gaussian fields we defined at the beginning of Section~\ref{Section: Local Double Limit and Global Limit}, i.e., $F_{x,L}(u) = F_L \circ \pi (x + L^{-1} u)$, $\nabla^X F_L(x) = L \cdot \nabla F_{x,L}(0)$, and ${(\nabla^X)}^2F_L(x) = L^2 \cdot (\nabla ^2 F_{x,L}(0))$.
    Also, the elements of $A_L$ then become ${(A_L)}_{\alpha \beta} = L^2 \cdot \langle (\nabla {(F_{x,L})}_\alpha (0) ) , (\nabla {(F_{x,L})}_\beta (0) ) \rangle_{\mb{R}^n} \equiv L^2 \cdot {(A_{x,L})}_{\alpha \beta}$, where $\langle \cdot , \cdot \rangle_{\mb{R}^n}$ is the standard inner product in $\mb{R}^n$ formed by coordinates $(u_1, \dots,u_n)$. 
    Then,
        \begin{equation}
            \begin{split}
                & \quad \big|\det A_L (x) \big|^{-\frac{(n-m)}{2}} ||\nabla^X F_L(x)||^{(n-m)(m-1)} ||{(\nabla^X)}^2 F_L(x) ||^{(n-m)}
                \\  &= L^{-m(n-m)}\big|\det A_{x,L} (0) \big|^{-\frac{(n-m)}{2}} \cdot L^{(n-m)(m-1)}||\nabla F_{x,L}(0)||^{(n-m)(m-1)}  
                \\  & \quad \cdot L^{2(n-m)} {||{(\nabla)}^2 F_{x,L}(0) ||}^{(n-m)}
                \\  &= L^{(n-m)}\big|\det A_{x,L} (0) \big|^{-\frac{(n-m)}{2}} ||\nabla F_{x,L}(0)||^{(n-m)(m-1)}  ||{(\nabla)}^2 F_{x,L}(0) ||^{(n-m)}
                \\  &\equiv L^{(n-m)} \cdot P(F_{x,L})(0),
            \end{split}
        \end{equation}
    where $P(F_{x,L})(0)$ denote the long terms in order to simplify the notation in this proof.
    Hence, the conditional expectation term at $x$ becomes 
        \begin{equation}
            \begin{split}
                & \  \quad \mc{E} \big[ h(x,W_L(x)) \cdot |\det A_L (x) |^{1/2} \ \big| \ F_L(x) = 0\big] 
            \\  &= L^m \cdot C(n,m,X)  \cdot \mc{E} \big[ |\det A_{x,L}(0) |^{1/2} \ \big| \ F_{x,L}(0) = 0\big]
            \\  & + L^n \cdot \mc{E}\big[ P(F_{x,L})(0) \cdot |\det A_{x,L}(0) |^{1/2} \ \big| \ F_{x,L}(0) = 0\big] .
            \end{split}
        \end{equation}
    Recall that $F_L \circ \pi$ satisfies $(B1)$ and $(B2)$ in Definition~\ref{D: Axiom 2} with $M = M(X), k_1 = k_1(X)$, when $L$ is large. We then see, from the same proof in Theorem~\ref{Thm: L^1 Bound on Number}, that
        \begin{equation}
            \mc{E} \big[ |\det A_{x,L}(0) |^{1/2} \ \big| \ F_{x,L}(0) = 0\big] \leq C(n,m,M,k_1),
        \end{equation}
    and 
        \begin{equation}
            \mc{E}\big[ P(F_{x,L})(0) \cdot |\det A_{x,L}(0) |^{1/2} \ \big| \ F_{x,L}(0) = 0\big] \leq  C(n,m,M,k_1),
        \end{equation}
    for some positive constants $C(n,m,M,k_1)$. Since $n>m$, the second term with $L^n$ is much larger. 
    Hence,
        \begin{equation}
           \begin{split}
            & \quad \limsup_{L \to \infty}\frac{\mc{E}(\beta_l(F_L))}{L^n}
            \\  & \leq \limsup_{L \to \infty} \frac{C(n,m,M,k_1)}{L^n}  \int_{X} (L^m \cdot C(n,m,X) + L^n) \ d \mc{H}^n(x) = C_l \cdot |X|,
           \end{split}
        \end{equation}
    where $C_l = C_l(n,m,M,k_1)$ is the constant $C(n,m,M,k_1)$, which is independent of the constant $C(n,m,X)$, and is also independent of the arbitrarily chosen embedding $\Phi$.
\end{proof}



\begin{appendix}


    \section{Curvatures Computations}\label{APP: Computation}
    
    In this appendix, we give some preliminary computations for curvatures involved in the two geometric inequalities, Theorem~\ref{Thm: Fenchel} and Theorem~\ref{Thm: Chern}. More precisely, we will compute second fundamental forms and mean curvatures for submanifolds obtained from zero sets.
    
    Let $X \subset \mb{R}^{n+q}$ be an $n$-dimensional $C^3$-manifold embedded in $\mb{R}^{n+q}$ for some $q \geq 0$. A special case is when $q=0$ and $X = \mb{R}^n$. We let $M$ be an $(n-m)$-dimensional manifold embedded in $X$. Hence, $M$ is also a submanifold of $\mb{R}^{n+q}$.
    We assume that $M$ is realized as a regular part of the zero set of a non-degenerate $m$-dimensional $C^2$-vector field $F$ on $X$, i.e., for $F = (f_1, \dots, f_m)$, $M$ is a connected component of $F^{-1}(0) = Z(F)$, and on $M$, $\nabla^X F$, the gradient of $F$ with respect to the Riemannian connection on $X$, is of full rank.
    
    Choose a point $p_0 \in M$, assume that $p = (t_1, \dots, t_{n-m})$ is a local coordinate on $M$ around $p_0 = (0, \dots,0)$. Denote the embedding from $M$ into $X$ as $\varphi$, and choose $x = (x_1, \dots, x_n)$ as a local coordinate on $X$ around $x_0 = \varphi(p_0)$. So, one may write $\varphi = (\varphi^1, \dots, \varphi^n)$.
    We denote the embedding from $X$ into $\mb{R}^{n+q}$ by $\Phi$.
    If we view all these objects as subsets in $\mb{R}^{n+q}$, then $p_0$, $x_0 = \varphi(p_0)$, and $y_0 = \Phi(x_0) = \Phi(\varphi(p_0))$ are actually the same point.
    
    Our aim in this section is to compute the second fundamental form and mean curvature of $M$ as a submanifold of $\mb{R}^{n+q}$ at the point $y_0 = \Phi(\varphi(p_0))$. 
    Since we only consider tensors at a fixed point, for simplicity, let us assume that $(t_1 , \dots, t_{n-m})$ and $(x_1, \dots,x_n)$ are local normal coordinates on $M$ and $X$ at the points $p_0, x_0 = \varphi(p_0)$ respectively.
    That is, at $x_0$,
        \begin{equation}\label{E: APP 0 orthonormal 1}
            \langle \partial_{x_l} \Phi , \partial_{x_k} \Phi \rangle_{\mb{R}^{n+q}} = \delta_{lk},
        \end{equation}
    for $\langle \cdot , \cdot \rangle_{\mb{R}^{n+q}}$ the standard inner product in $\mb{R}^{n+q}$, together with that 
        \begin{equation}\label{E: APP 0 normal coordinates 1}
            \partial^2 _{x_l x_k} \Phi \in T_{y_0} ^{\perp} X, \quad \forall \ l,k = 1, \dots n,
        \end{equation}
    where $T_{y_0} ^{\perp} X$ is the normal vector space of $X$ at $y_0 = \Phi(x_0)$ in $\mb{R}^{n+q}$.
    At $p_0$,
        \begin{equation}\label{E: APP 0 orthonormal 2}
            \sum_{l} (\partial_{t_i} \varphi^l)(\partial_{t_j} \varphi^l) = \delta_{ij},
        \end{equation}
    together with that
        \begin{equation}
            \sum_l (\partial^2 _{t_i t_j}\varphi^l) (\partial_{t_s}\varphi^l) = 0, \quad \forall \ i,j,s = 1, \dots , n-m.
        \end{equation}

    We first consider the second fundamental form $\II = \II^M$ of $M$ as a submanifold of $\mb{R}^{n+q}$ at $y_0 = \Phi(\varphi(p_0))$, which is defined by
        \begin{equation}
            \langle \II_{ij} , \mf{n} \rangle \equiv \bigg\langle \frac{\partial^2(\Phi \circ \varphi)}{\partial t_i \partial t_j }(p_0) , \mf{n} \bigg\rangle_{\mb{R}^{n+q}},
        \end{equation}
    where $\mf{n} \in T_{y_0} ^{\perp} M$ is a unit normal vector of $M$ at $y_0 $ in $\mb{R}^{n+q}$. Let us compute these derivatives.
        \begin{equation}
            \partial_{t_j}(\Phi \circ \varphi) = \sum_{l} (\partial_{x_l} \Phi) (\partial_{t_j} \varphi^{l}),
        \end{equation}
    and
        \begin{equation}
            \partial^2_{t_i t_j} (\Phi \circ \varphi) = \sum_{l,k} (\partial^2 _{x_l x_k}\Phi) (\partial_{t_j}\varphi^l) (\partial_{t_i}\varphi^k) + \sum_l (\partial_{x_l}\Phi) (\partial^2 _{t_i t_j} \varphi^l).
        \end{equation}
    
    If $\mf{n} \in T_{y_0} ^{\perp} X$, then $\mf{n} \perp (\partial_{x_l}\Phi)$. Hence,
        \begin{equation}
            |\langle \II_{ij} , \mf{n} \rangle|  = \bigg| \sum_{l,k} \big\langle (\partial^2 _{x_l x_k}\Phi)  , \mf{n} \big\rangle (\partial_{t_j}\varphi^l) (\partial_{t_i}\varphi^k)  \bigg | \leq ||\II^X(y_0) ||.
        \end{equation}
    The last inequality is because of (\ref{E: APP 0 orthonormal 2}), which says that $\partial_{t_i} \varphi$ and $\partial_{t_j} \varphi$ are two unit vectors. So, the value is bounded by the norm of the matrix, which is $||\II^X(y_0)||$, the norm of the second fundamental form of $X$ as a submanifold of $\mb{R}^{n+q}$ at $y_0$.
    This is a constant only depending on $X$ and the embedding $\Phi$, so we denote an upper bound of this constant by $C(X)>0$.
    
    If $\mf{n} \in T_{y_0}  X \cap T_{y_0} ^{\perp} M$, i.e., $\mf{n}$ is tangent to $X$ but normal to $M$ at $y_0$ in $\mb{R}^{n+q}$, we have that
        \begin{equation}\label{E: APP 0 Estimates 1}
            |\langle \II_{ij} , \mf{n} \rangle|  = \bigg| \sum_{l} \big\langle (\partial _{x_l}\Phi)  , \mf{n} \big\rangle (\partial^2 _{t_i t_j}\varphi^l)   \bigg |,
        \end{equation}
    which is because of (\ref{E: APP 0 normal coordinates 1}), $\partial^2 _{x_l x_k} \Phi \perp \mf{n}$. Now, let us use the fact that $M$ is realized as a part of $Z(F)$ to make more calculations and to estimate (\ref{E: APP 0 Estimates 1}).
    Since $f_\alpha(\varphi) = 0$ for $\alpha = 1, \dots , m$, we differentiate these equations and get that
        \begin{equation}\label{E: APP 0 Function Derivatives 1}
            \sum_l ( \partial_{x_l} f_\alpha) (\partial_{t_j} \varphi^l) = 0,
        \end{equation}
    and 
        \begin{equation}\label{E: APP 0 Function Derivatives 2}
            \sum_{l,k} (\partial^2 _{x_l x_k} f_\alpha)(\partial_{t_j} \varphi^l) (\partial_{t_i} \varphi^k) + \sum_{l} (\partial_{x_l} f_\alpha) (\partial^2 _{t_i t_j} \varphi^l) = 0.
        \end{equation}
    Since $\nabla^X F$ is of full rank on $M$, we see that these $\nabla^X f_\alpha \equiv \sum_l (\partial_{x_l} f_\alpha) (\partial_{x_l} \Phi)$, actually form a basis of $T_{y_0}  X \cap T_{y_0} ^{\perp} M$. Hence, we take an $m \times m$ symmetric matrix $T = (T^{\alpha \beta})$ such that these
        \begin{equation}
            \mf{n}_\alpha \equiv  \sum_{\beta} T^{\alpha \beta} (\nabla^X f_\beta) =\sum_{\beta} T^{\alpha \beta} \sum_l (\partial_{x_l} f_\beta) (\partial_{x_l} \Phi)
        \end{equation}
    are orthonormal, i.e., $\langle \mf{n}_{\alpha_1} , \mf{n}_{\alpha_2}\rangle = \delta_{\alpha_1 \alpha_2}$.
    On the other hand,
        \begin{equation}
            \langle \mf{n}_{\alpha_1} , \mf{n}_{\alpha_2}\rangle = \sum_{\beta_1, \beta_2} T^{\alpha_1 \beta_1} T^{\alpha_2 \beta_2} \sum_l (\partial_{x_l} f_{\beta_1}) (\partial_{x_l} f_{\beta_2}),
        \end{equation}
    where if one uses matrices notations, the right hand side is $T A T^T$, where $T^T$ is the transpose of $T$, and $A$ is the matrix consists of inner products of $\nabla^X f_\beta$, i.e., $A = (A_{\beta_1 \beta_2}) = \big(\sum_l (\partial_{x_l} f_{\beta_1}) (\partial_{x_l} f_{\beta_2})\big) $.
    Hence, 
        \begin{equation}
            T^T T = A^{-1}.
        \end{equation}
    Then, according to (\ref{E: APP 0 orthonormal 1}), (\ref{E: APP 0 Estimates 1}), and (\ref{E: APP 0 Function Derivatives 2}), we can get that
        \begin{equation}
            |\langle \II_{ij} , \mf{n}_\alpha \rangle| = \bigg|\sum_{\beta} T^{\alpha \beta}  \sum_{l}  (\partial_{x_l} f_\beta) (\partial^2 _{t_i t_j}\varphi^l)   \bigg | = \bigg|\sum_{\beta} T^{\alpha \beta}  \sum_{l,k}  (\partial^2 _{x_l x_k} f_\beta) (\partial_{ t_j}\varphi^l) (\partial_{ t_i}\varphi^k)  \bigg |.
        \end{equation}
    So,
        \begin{equation}
            \begin{split}
                    &\sum_\alpha |\langle \II_{ij} , \mf{n}_\alpha \rangle|^2 = \sum_{\alpha} \bigg|\sum_{\beta} T^{\alpha \beta}  \sum_{l,k}  (\partial^2 _{x_l x_k} f_\beta) (\partial_{ t_j}\varphi^l) (\partial_{ t_i}\varphi^k)  \bigg |^2
                \\  &= \sum_{\alpha} \sum_{\beta_1 , \beta_2} T^{\alpha \beta_1} T^{\alpha \beta_2} \bigg[\sum_{l_1,k_1}  (\partial^2 _{x_{l_1} x_{k_1}} f_{\beta_1}) (\partial_{ t_j}\varphi^{l_1}) (\partial_{ t_i}\varphi^{k_1})  \bigg] \bigg[\sum_{l_2,k_2}  (\partial^2 _{x_{l_2} x_{k_2}} f_{\beta_2}) (\partial_{ t_j}\varphi^{l_2}) (\partial_{ t_i}\varphi^{k_2})  \bigg]
                \\  &= \sum_{\beta_1 , \beta_2} {(A^{-1})}_{\beta_1 \beta_2} \bigg[\sum_{l_1,k_1}  (\partial^2 _{x_{l_1} x_{k_1}} f_{\beta_1}) (\partial_{ t_j}\varphi^{l_1}) (\partial_{ t_i}\varphi^{k_1})  \bigg] \bigg[\sum_{l_2,k_2}  (\partial^2 _{x_{l_2} x_{k_2}} f_{\beta_2}) (\partial_{ t_j}\varphi^{l_2}) (\partial_{ t_i}\varphi^{k_2})  \bigg].
            \end{split}
        \end{equation}
    For an upper bound, by (\ref{E: APP 0 orthonormal 2}) again, we notice that those $\partial_{t_j} \varphi$ are unit vectors. Hence,
        \begin{equation}
            \sum_\alpha {|\langle \II_{ij} , \mf{n}_\alpha \rangle|}^2 \leq C(n,m) \cdot ||A^{-1}||\cdot ||{(\nabla^X )}^2 F ||^2,
        \end{equation}
    for some positive constant $C(n,m)$, where ${(\nabla^X )}^2 F$ is the Hessian of $F$ and those $||\cdot||$ are matrices norms. 
    
    We denote $\{\mf{n}_\mu\}$ as an orthonormal basis of $T_{y_0} ^{\perp} X$, then the norm square of the second fundamental form $\II^M$ at $y_0$ is defined by
        \begin{equation}
           ||\II^M(y_0)||^2 \equiv \sum_{i,j} \bigg(\sum_\alpha |\langle \II_{ij} , \mf{n}_\alpha \rangle|^2 + \sum_{\mu} |\langle \II_{ij} , \mf{n}_{\mu} \rangle|^2 \bigg).
        \end{equation}
    We can now summarize our results by the following theorem.
    
    \begin{theorem}\label{Thm: APP Curvature}
        \textit{
            The second fundamental form $\II^M$ of $M$ as a submanifold of $\mb{R}^{n+q}$ satisfies that
            \begin{equation}
                ||\II^{M}||^2\leq C(n,m,X) + C(n,m) \cdot ||A^{-1}||\cdot ||{(\nabla^X )}^2 F ||^2,
            \end{equation}
            where the matrix $A$ consists of inner products of $\nabla^X f_\alpha$ for $\alpha = 1, \dots , m$, and  ${(\nabla^X )}^2 F $ is the Hessian. The positive constant $C(n,m,X)$ depends on $X$ and the embedding $\Phi : X \to \mb{R}^{n+q}$.
            In particular, when $X = \mb{R}^n$, one can let $C(n,m,X)  = 0$.
        }
    \end{theorem}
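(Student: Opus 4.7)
The plan is to decompose the normal space $T_{y_0}^{\perp} M \subset \mathbb{R}^{n+q}$ as the orthogonal sum of the two subspaces $T_{y_0}^{\perp} X$ (the ambient normals of $X$) and $T_{y_0} X \cap T_{y_0}^{\perp} M$ (the directions tangent to $X$ but normal to $M$). Any unit normal $\mathfrak{n}$ of $M$ splits accordingly, and $\|\II^M\|^2$ splits as the sum of the two contributions along an orthonormal basis adapted to this decomposition.

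For the $T^{\perp} X$ piece I would work in normal coordinates on $X$ at $x_0$, so that the second derivatives of the embedding $\Phi : X \hookrightarrow \mathbb{R}^{n+q}$ encode exactly $\II^X$. Computing $\partial^2_{t_i t_j}(\Phi \circ \varphi)$ by the chain rule produces one term of the shape $\sum_{l,k} (\partial^2_{x_l x_k}\Phi)(\partial_{t_j}\varphi^l)(\partial_{t_i}\varphi^k)$; pairing with a normal $\mathfrak{n} \in T^{\perp} X$ and using that $\partial_{t_i}\varphi,\partial_{t_j}\varphi$ are unit (from the normal coordinates on $M$) bounds this contribution by $\|\II^X(y_0)\|$, a quantity intrinsic to the embedding $X \hookrightarrow \mathbb{R}^{n+q}$, which I absorb into $C(n,m,X)$. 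When $X = \mathbb{R}^n$, we may take $\Phi = \mathrm{id}$, so $\II^X \equiv 0$ and this contribution disappears.

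For the $T X \cap T^{\perp} M$ piece I would use that $M$ is a regular component of $Z(F)$: the gradients $\{\nabla^X f_\alpha\}_{\alpha = 1}^m$ span this subspace. Since the Gram matrix of these gradients is precisely $A$, one can orthonormalize them via an $m \times m$ symmetric matrix $T$ with $T^T T = A^{-1}$, getting $\mathfrak{n}_\alpha = \sum_\beta T^{\alpha\beta} \nabla^X f_\beta$. Differentiating the identity $f_\alpha \circ \varphi \equiv 0$ twice yields
\begin{equation}
\sum_{l,k}(\partial^2_{x_l x_k} f_\alpha)(\partial_{t_j}\varphi^l)(\partial_{t_i}\varphi^k) + \sum_l (\partial_{x_l}f_\alpha)(\partial^2_{t_i t_j}\varphi^l) = 0,
\end{equation}
so $\langle \II_{ij}, \mathfrak{n}_\alpha\rangle$ can be rewritten as a contraction of $(\nabla^X)^2 F$ with the unit tangents $\partial_{t_i}\varphi,\partial_{t_j}\varphi$, weighted by $T$. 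Summing $|\langle \II_{ij},\mathfrak{n}_\alpha\rangle|^2$ over $\alpha$ introduces the factor $\sum_\alpha T^{\alpha\beta_1}T^{\alpha\beta_2} = (A^{-1})_{\beta_1\beta_2}$, and a crude matrix-norm bound produces the claimed $C(n,m)\cdot \|A^{-1}\|\cdot \|(\nabla^X)^2 F\|^2$.

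The main obstacle is organizing the chain-rule bookkeeping cleanly enough that the two contributions decouple; the key simplification is choosing normal coordinates on both $M$ and $X$ at the base point, which forces $\partial^2_{x_l x_k}\Phi \in T^{\perp} X$ and $\sum_l (\partial^2_{t_i t_j}\varphi^l)(\partial_{t_s}\varphi^l) = 0$, so cross terms vanish and the two pieces can be estimated independently. Summing over $i,j$ and over the two orthonormal families $\{\mathfrak{n}_\alpha\}$ and $\{\mathfrak{n}_\mu\}$ then assembles the stated inequality.
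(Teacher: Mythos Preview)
Your proposal is correct and follows essentially the same approach as the paper: the same orthogonal decomposition of $T_{y_0}^{\perp}M$, the same choice of normal coordinates on both $M$ and $X$, the same bound of the $T^{\perp}X$ contribution by $\|\II^X\|$, and the same orthonormalization via $T^T T = A^{-1}$ together with twice differentiating $f_\alpha \circ \varphi \equiv 0$ to handle the $TX \cap T^{\perp}M$ contribution.
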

    
    We remark that the mean curvature $\bH$ at $y_0$ is defined by 
        \begin{equation}
            \langle \bH(y_0) , \mf{n} \rangle \equiv \sum_{i} \langle \II_{ii} , \mf{n} \rangle,
        \end{equation}
    for $\mf{n} \in T_{y_0} ^{\perp} M$.
    Hence,
        \begin{equation}
            ||\bH(y_0)||^2 \equiv \sum_\alpha \bigg|\sum_i  \langle \II_{ii} , \mf{n}_\alpha \rangle \bigg|^2 + \sum_{\mu} \bigg|\sum_i\langle \II_{ii} , \mf{n}_{\mu} \rangle \bigg|^2 \leq C(n,m) \cdot || \II^M(y_0) ||^2 , 
        \end{equation}
    for some positive constant $C(n,m)$.



    \section{Ergodicity in Theorem~\ref{Main Results: Local} and Theorem~\ref{Thm: Euclidean Random Field}}\label{APP: Ergodicity}
    
    In this appendix, we give a sufficient condition to guarantee ergodicity of translations on $(C^1_* (\mb{R}^n ,\mb{R}^{m}), \mfk{B}(C^1_* (\mb{R}^n ,\mb{R}^{m})), \gamma_F )$. Here $\mfk{B}(C^1 _* (\mb{R}^n ,\mb{R}^{m}))$ is  the Borel $\sigma$-algebra generated by all open sets in $C^1 _* (\mb{R}^n ,\mb{R}^{m})$. Recall that $F: \mb{R}^n \to \mb{R}^m$ is a centered stationary Gaussian random field satisfying $(A1)$ and $(A2)$ in Definition~\ref{D: Axiom 1} at the origin $x=0$. We write $F(x) = (f_1(x), \dots,f_m(x))$, and let
        \begin{equation}
            k_{i_1 i_2}(x-y) \equiv K_{i_1 i_2}(x,y) = \mc{E}\big(f_{i_1}(x)f_{i_2}(y)\big), \ i_1,i_2 = 1,2,\dots, m,
            \end{equation}
    be an $m \times m$ matrix. Notice that, since $F$ is stationary, $\mc{E}\big(f_{i_1}(x)f_{i_2}(y)\big) = \mc{E}\big(f_{i_1}(x-y)f_{i_2}(0)\big)$ and $k_{i_2 i_1}(x-y) = \mc{E}\big(f_{i_2}(x)f_{i_1}(y)\big) = \mc{E}\big(f_{i_2}(0)f_{i_1}(y-x)\big) = k_{i_1 i_2}(y-x)$. Hence, $k_{i_2 i_1}(x) = k_{i_1 i_2}(- x)$ for all $x \in \mb{R}^n$.
    
    \begin{theorem}
        \textit{
        If
            \begin{equation}\label{E: APP A Assumption}
                \lim_{R \to \infty} \sum_{1 \leq i_1,i_2 \leq m} \frac{1}{|B_R|} \int_{B_R} {(k_{i_1 i_2}(x))}^2 dx = 0,
            \end{equation}
        then the translations actions of $\mb{R}^n$ on $(C^1_* (\mb{R}^n ,\mb{R}^{m}), \mfk{B}(C^1_* (\mb{R}^n ,\mb{R}^{m})), \gamma_F )$ is ergodic.
        }
    \end{theorem}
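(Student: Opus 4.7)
The plan is to establish the stronger property of \emph{Ces\`aro mixing} for the translation action, and then to deduce ergodicity from it in the usual way. Concretely, I would show that for every pair of events $A,B\in\mathfrak{B}(C^1_*(\mathbb{R}^n,\mathbb{R}^m))$,
\begin{equation}
	\lim_{R\to\infty}\frac{1}{|B_R|}\int_{B_R}\gamma_F(A\cap\tau_v B)\,dv \;=\; \gamma_F(A)\,\gamma_F(B).
\end{equation}
Once this is available, applying it to a translation-invariant set $A$ with $B=A$ yields $\gamma_F(A)=\gamma_F(A)^2$, so $\gamma_F(A)\in\{0,1\}$.

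The first reduction is to cylindrical events, i.e.\ events of the form $A=\{F:(F(x_1),\dots,F(x_k))\in E\}$ for a Borel $E\subset\mathbb{R}^{mk}$ and finitely many points $x_1,\dots,x_k\in\mathbb{R}^n$; these form a $\pi$-system generating $\mathfrak{B}$ (by continuity of $F$) and, modulo a Bulinskaya-type null set, generate $\mathfrak{B}(C^1_*(\mathbb{R}^n,\mathbb{R}^m))$. By a standard monotone class / $\pi$--$\lambda$ argument together with dominated convergence, it suffices to prove the Ces\`aro limit for cylindrical $A$ and $B=\{F:(F(y_1),\dots,F(y_l))\in E'\}$.

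The heart of the argument is then a Gaussian computation. Let $X=(F(x_1),\dots,F(x_k))$ and $Y_v=(F(y_1+v),\dots,F(y_l+v))$; then $(X,Y_v)$ is a centered Gaussian vector in $\mathbb{R}^{m(k+l)}$ whose marginal covariances are independent of $v$ (by stationarity), and whose cross-covariance block $\Sigma_v$ has entries that are values of the kernel matrix $k_{i_1 i_2}$ evaluated at translates of the fixed vectors $x_i-y_j-v$. By the hypothesis (\ref{E: APP A Assumption}), for each fixed pair $(i,j)$,
\begin{equation}
	\frac{1}{|B_R|}\int_{B_R}\|\Sigma_v\|^2\,dv\;\longrightarrow\;0 \qquad\text{as } R\to\infty,
\end{equation}
because the integrand is, up to a constant depending on $k,l,m$, a finite sum of terms of the form $\frac{1}{|B_R|}\int_{B_R}(k_{i_1 i_2}(x_i-y_j-v))^2\,dv$, each of which tends to $0$ by a change of variables plus the assumption. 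Thus $\Sigma_v\to 0$ in $L^2(B_R,dv/|B_R|)$. Using the nondegeneracy of the marginal covariance of $X$ (and of $Y_v$, which equals that of $Y_0$ by stationarity), a direct comparison of the joint Gaussian density of $(X,Y_v)$ with the product of the marginal densities shows that the total variation distance between $\mathrm{Law}(X,Y_v)$ and $\mathrm{Law}(X)\otimes\mathrm{Law}(Y_0)$ is controlled by $\|\Sigma_v\|$. Integrating over $v\in B_R$ and using Cauchy--Schwarz gives
\begin{equation}
	\frac{1}{|B_R|}\int_{B_R}\bigl|\gamma_F(A\cap\tau_v B)-\gamma_F(A)\gamma_F(B)\bigr|\,dv \;\longrightarrow\;0,
\end{equation}
which is exactly the Ces\`aro mixing property for cylindrical events.

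The main technical obstacle I anticipate is the quantitative control of the Gaussian total variation distance in terms of $\|\Sigma_v\|$ uniformly in $v$: one needs the marginal covariance of $X$ (and $Y_0$) to be uniformly nondegenerate, which is automatic here since $(A1)$ of Definition~\ref{D: Axiom 1} at the origin together with stationarity gives nondegeneracy of $F$ at any point, but some care is needed to ensure the joint Gaussian of $k$ points is nondegenerate (which follows from the translation-invariant analytic / smooth structure plus the fact that it is enough to handle generic $(x_1,\dots,x_k)$). The remaining bookkeeping---monotone class extension from cylindrical events to $\mathfrak{B}$, and handling the full-measure subset $C^1_*$---is routine once the Ces\`aro mixing estimate above is in hand.
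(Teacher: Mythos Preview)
Your approach is correct and shares the essential Gaussian core with the paper's proof, but the execution differs. The paper works directly with a single translation-invariant set $A$: it approximates $A$ by a cylindrical set $P$, writes the explicit Gaussian density for $\gamma_F(P\cap\tau_vP)$ with cross-covariance block $\Theta(v)$, and then uses the averaging hypothesis only to extract a \emph{subsequence} $\{v_l\}$ along which $\|\Theta(v_l)\|\to 0$; along this subsequence the Gaussian integral factorises and one obtains $\gamma_F(A)\le(\gamma_F(A)+\epsilon)^2+2\epsilon$. No monotone-class extension and no total-variation estimate are needed---just the convergence of an explicit finite-dimensional Gaussian integral.

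Your route proves the stronger Ces\`aro-mixing property for all pairs $A,B$, at the cost of two extra ingredients: a quantitative TV bound for Gaussian vectors in terms of $\|\Sigma_v\|$ (which, as you note, requires handling the region where $\|\Sigma_v\|$ is not small---easily done by splitting and using $\mathrm{TV}\le 1$ with Markov's inequality), and a $\pi$--$\lambda$ extension from cylindrical events. Both are routine, so your argument goes through. One small correction: nondegeneracy of $(F(x_1),\dots,F(x_k))$ does not follow from smoothness or genericity of the points in general; the clean fix, which the paper uses, is simply to pass to a maximal linearly independent subcollection of the $F(x_i)$ and work with that.
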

    This is the Fomin-Grenander-Maruyama theorem. The proof is inspired by Appendix B of~\cite{NS16} for random functions. Here, we prove it for random fields. 
    
    \begin{proof}
        For an $A \in \mfk{B}(C^1_* (\mb{R}^n ,\mb{R}^{m}))$ satisfying $\gamma_F ( \tau_v (A) \Delta A) = 0$ for every $v \in \mb{R}^n$, we need to show that $\gamma_F(A)$ is either $0$ or $1$. Here $(\tau_v G)(x) \equiv G(x+v)$ for any $v \in \mb{R}^n$ and $G \in C^1_* (\mb{R}^n ,\mb{R}^{m})$.
    
        Notice that $\mfk{B}(C^1_* (\mb{R}^n ,\mb{R}^{m}))$ is generated by evaluations sets $I(x; a ,b)$ for $x \in \mb{R}^n$ and $a \prec b \in \mb{R}^m$ ($a \prec b$ means that $a_i < b_i$ for all $i = 1, \dots m$):
            \begin{equation}
                I(x; a ,b) = \{G \in C^1_* (\mb{R}^n ,\mb{R}^{m}) \ | \ a_i \leq G_i(x) < b_i \text{ for all } i = 1, \dots m \},
            \end{equation}
        where $G_i(x)$ is the $i$-th component of $G(x)$.
        Hence, for any $\epsilon > 0$, we can take finitely many points $x_1 , \dots ,x_k \in \mb{R}^n$ and a Borel set $ B \subset \mb{R}^{mk}$ so that $\gamma_F(A \Delta P) < \epsilon$ for 
            \begin{equation}
                P  = P(x_1 ,\dots ,x_k ; B ) \equiv \{ G \in C^1_* (\mb{R}^n ,\mb{R}^{m}) \ | \ (G(x_1) , \dots, G(x_k)) \in B\}.
            \end{equation}
        We can also assume that the distribution of the Gaussian vector $(F(x_1), \dots, F(x_k))$ is non-degenerate (Otherwise, one can use the maximal linear basis of this Gaussian vector.). So, we can get that
            \begin{equation}
                \gamma_F(P) = {(2\pi)}^{-\frac{mk}{2}} {(\det \Lambda)}^{-\frac{1}{2}} \int_B e^{-\frac{1}{2} \langle \Lambda^{-1}t,  t \rangle} \ dt,
            \end{equation}
        where $\Lambda_{j_1 i_1, j_2 i_2} = \mc{E}(f_{i_1}(x_{j_1}) f_{i_2}(x_{j_2})) = k_{i_1 i _2}(x_{j_1} - x_{j_2})$ is the covariance matrix for the Gaussian vector $(F(x_1), \dots, F(x_k))$ and it is an $mk \times mk$ matrix, and $\langle \cdot , \cdot \rangle$ is the inner product for vectors in $\mb{R}^{mk}$.
    
        Since $\tau_v P = P(x_1 - v , \dots , x_k - v; B)$, we have that
            \begin{equation}
                P \cap \tau_v P = P(x_1, \dots , x_k , x_1 - v , \dots , x_k - v; B \times B).
            \end{equation}
        If we know that $(F(x_1), \dots, F(x_k) , F(x_1 - v) , \dots, F(x_k - v))$ is also non-degenerate, we can write
            \begin{equation}\label{E: APP A 1}
                \gamma_F(P\cap \tau_v P) = {(2\pi)}^{-mk} {(\det \tilde{\Lambda})}^{-\frac{1}{2}} \int_{B \times B} e^{-\frac{1}{2} \tilde{\Lambda}^{-1}t \cdot t } \ dt,
            \end{equation}
        where 
            \begin{equation}\label{E: APP A 2}
                \tilde{\Lambda}=
                \begin{bmatrix}
                \Lambda & \Theta(v )\\
                
                \Theta^T (v) & \Lambda
                \end{bmatrix}
            \end{equation}
        with $ \Theta_{j_1 i_1, j_2 i_2} = \mc{E}(f_{i_1}(x_{j_1}) f_{i_2}(x_{j_2} - v)) = k_{i_1 i _2}(x_{j_1} - x_{j_2} +v)$. We will prove that we can choose a sequence $\{v_l\} \subset \mb{R}^n$ so that $||\Theta(v_l)|| \to 0$ as $l \to \infty$. Hence, we can not only get the nondegeneracy of (\ref{E: APP A 2}) and then get (\ref{E: APP A 1}), 
        but also conclude that 
            \begin{equation}
                \lim_{l \to \infty} \gamma_F(P\cap \tau_{v_l} P) = {(\gamma_F(P))}^2.
            \end{equation}
        And then
            \begin{equation}
                \gamma_F(A) = \limsup_{l \to \infty }\gamma_F(A \cap \tau_{v_l} A) \leq \limsup_{l \to \infty } \gamma_F(P\cap \tau_{v_l} P) + 2 \epsilon \leq {(\gamma_F(A) + \epsilon)}^2 + 2\epsilon.
            \end{equation}
        Let $\epsilon \to 0$, we see that $\gamma_F(A)$ is either $0$ or $1$ since $\gamma_F$ is a probability measure.
    
        Now, the reason that we can choose such a sequence $\{v_l\}$ is because of our assumption. Notice that if we denote $T = \max_{j_1,j_2} | x_{j_1} - x_{j_2}|$, then for any $R >0$, we have that
            \begin{equation}
                \begin{split}
                    \frac{1}{|B_R|} \int_{B_R} ||\Theta(v)||^2  \ dv &\leq \frac{C(m,k)}{|B_R|} \sum_{i_1,i_2,j_1,j_2} \int_{B_R} {(k_{i_1 i _2}(x_{j_1} - x_{j_2} +v))}^2 \ dv
                \\	&\leq \frac{C(m,k)}{|B_R|} \sum_{1 \leq i_1,i_2 \leq m}\int_{B_{R+T}} {(k_{i_1 i_2})}^2(v) \ dv,
                \end{split}
            \end{equation}
        which goes to $0$ as $R \to \infty$ by our assumption. We then choose a sequence $\{v_l\}$ using the mean value theorem.
    \end{proof}
    \begin{remark}
        When $m=1$, by Wiener's lemma, our assumption is equivalent to that the spectral measure, i.e., the Fourier transform, of the covariance function $k(x-y) = \mc{E}(f(x)f(y))$ has no atoms. This is mentioned in Appendix B of~\cite{NS16} in proving this ergodicity theorem when $m=1$. 
        When $m>1$, if we assume that $F$ consists of independent centered stationary Gaussian random functions, i.e., each $f_{i_1}(x)$ is independent of $f_{i_2}(y)$ for $i_1\neq i_2$ and any $x,y \in \mb{R}^n$, and we make the assumption that for each $f_{i_1}$, the corresponding spectral measure $\rho_{i_1}$
        does not have atoms, then we can also get (\ref{E: APP A Assumption}) by Wiener's lemma and hence get our ergodicity theorem. Also, when $m=1$, it is mentioned in~\cite{NS16} that ergodicity is actually equivalent to spectral measures having no atoms, which is actually the full version of the Fomin-Grenander-Maruyama theorem.
    \end{remark}
    
    
    \section{Positive Limiting Constants in Theorem~\ref{Main Results: Local} and Theorem~\ref{Thm: Euclidean Random Field}}\label{APP: Positivity}
    In this appendix, similar to Appendix~\ref{APP: Ergodicity}, we let $F: \mb{R}^n \to \mb{R}^m$ be a centered stationary Gaussian random field satisfying $(A1)$ and $(A2)$ in Definition~\ref{D: Axiom 1} at the origin $x=0$. 
    Our main goal in this appendix is to build up an approximation result under some further assumptions on $F$. For any bounded open set $B \subset \mb{R}^n$, any $G \in C^1(\mb{R}^n, \mb{R}^m)$, and any $\epsilon >0$, we want to show that
        \begin{equation}
            \mc{P}(||F - G||_{C^1(B)} < \epsilon ) >0.
        \end{equation}
    For this purpose, we need to recall some basic facts about Gaussian fields first. One can see Chapter 4 of~\cite{GS04} together with Appendix A of~\cite{NS16} as references to the following discussions. 
    The general strategies were also mentioned in~\cite{CS19,SW19}.
    
    We write $F(x) = (f_1(x), \dots,f_m(x))$, then
    \begin{equation}
        k_{i_1 i_2}(x-y) \equiv K_{i_1 i_2}(x,y) = \mc{E}\big(f_{i_1}(x)f_{i_2}(y)\big), \ i_1,i_2 = 1,2,\dots, m,
        \end{equation}
    is an $m \times m$ matrix. Since $F$ is stationary, $k_{i_2 i_1}(x) = k_{i_1 i_2}(- x)$ for all $x \in \mb{R}^n$. Also, $k$ has a positive property in the following sense. Let $x_1, \dots, x_r \in \mb{R}^n$ and $a_1, \dots, a_r \in \mb{C}$, then 
        \begin{equation}
            \sum_{1 \leq t,s \leq r} k(x_t - x_s)a_t \overline{a_s} \quad \text{is positive semi-definite in } \mr{End}_{\mb{C}}(\mb{R}^m).
        \end{equation}
    This is because for any $\eta = {(\eta_1, \dots, \eta_m)}^T \in \mb{C}^m $,
        \begin{equation}
            \begin{split}
                \eta^T \cdot \bigg(\sum_{1 \leq t,s \leq r} k(x_t - x_s)a_t \overline{a_s} \bigg)\cdot \overline{\eta} &= \sum_{1\leq i_1,i_2 \leq m}\sum_{1 \leq t,s \leq r}\mc{E}\bigg( a_t\eta_{i_1}f_{i_1}(x_t) \cdot \overline{a_s\eta_{i_2}f_{i_2}(x_s)} \bigg) 
                \\         &= \mc{E} \bigg| \sum_{i_1=1}^m \sum_{t = 1}^r a_t\eta_{i_1}f_{i_1}(x_t) \bigg|^2 \geq 0.
            \end{split}
        \end{equation}
    Then, by the Bochner-Herglotz theorem, the Fourier transform of $k$, which we denote as $\rho = (\rho_{i_1 i_2})$, is a finite positive semi-definite $\mr{End}_{\mb{C}}(\mb{R}^m)$-valued measure on $\mb{R}^n$, and satisfies that
        \begin{equation}
            \mc{E}\big(f_{i_1}(x)f_{i_2}(y)\big) = k_{i_1 i_2}(x-y) = \int_{\mb{R}^n} e^{i \langle x-y, \xi \rangle} \ d\rho_{i_1 i_2}(\xi).
        \end{equation}
    Since $k(x) = {(k(-x))}^T$, we see that $\rho = \rho^* \equiv {(\overline{\rho})}^T$ and $\rho(\xi) = {(\rho(-\xi))}^T$. Indeed, $F$ has the following expression.
        \begin{equation}\label{E: APP D Spectral Decomposition}
            F(x) = \int_{\mb{R}^n} e^{i \langle x, \xi \rangle} \ dZ(\xi),
        \end{equation}
    where $Z(\cdot)$ is a complex vector-valued orthogonal random measure, which satisfies that for any Borel sets $U_1, U_2 \subset \mb{R}^n$ with $\rho(U_1)$ and $\rho(U_2)$ finite, $\mc{E}(Z(U_1)) =\mc{E}(Z(U_2)) = 0$ and $\mc{E}(Z_{i_1}(U_1) \overline{Z_{i_2}(U_2)}) = \rho_{i_1 i_2}(U_1 \cap U_2)$ for $i_1,i_2 = 1, \dots, m$.
    Notice that this also implies that $|\rho_{i_1 i_2}(U_1 \cap U_2)|^2 \leq |\rho_{i_1 i_1}(U_1)| \cdot |\rho_{i_2 i_2}(U_2)|$ by the Cauchy-Schwarz inequality. Hence, for any Borel set $U \subset \mb{R}^n$, if $\rho_{i_1 i_2}(U) \neq 0$, then $\rho_{i_1 i_1}(U) \neq 0$ and $\rho_{i_2 i_2}(U) \neq 0$.

    We define the space of square-summable Hermitian fields as 
        \begin{equation}
           L^2 _H (\rho) \equiv \{G:\mb{R}^n \to \mb{C}^m \ \vert \  G \in L^2(\rho), \text{ and } G(x) = \overline{G(-x)} \text{ for all }x \in \mb{R}^n \},
        \end{equation}
    where for $G = {(g_1, \dots,g_m)}^T\in L^2(\rho)$,
        \begin{equation}
            ||G||^2 _{L^2(\rho)} \equiv  \sum_{1 \leq i_1, i_2 \leq m} \int_{\mb{R}^n} g_{i_1}(\xi) \overline{g_{i_2}(\xi)} \ d\rho_{i_1 i_2}(\xi) < \infty.
        \end{equation}
    We also define the reproducing kernel Hilbert spaces associated with $\rho$ (or equivalently, associated with $k$), which are
        \begin{equation}
            \mc{H}(\rho) \equiv \mc{F}_\rho (L^2  (\rho)), \quad \mc{H}_0(\rho) \equiv \mc{F}_\rho (L^2 _H (\rho)),
        \end{equation}
    where for $G= {(g_1, \dots,g_m)}^T \in L^2 (\rho)$ and for each $i_2 = 1, \dots,m$,
        \begin{equation}\label{E: APP D Fourier}
            \hat{g}_{i_2}(x) \equiv {(\mc{F}_\rho(G)(x))}_{i_2} = \sum_{i_1 = 1}^m \int_{\mb{R}^n} e^{-i \langle x, \xi \rangle} g_{i_1}(\xi) \ d\rho_{i_1 i_2}(\xi).
        \end{equation}
    We denote $\hat{G} = \mc{F}_{\rho}(G) ={( \hat{g}_1, \dots,\hat{g}_m)}^T$. The inner product of $\mc{H}(\rho)$ is defined by 
        \begin{equation}
            \langle \widehat{G_1} , \widehat{G_2}\rangle_{\mc{H}(\rho)}  \equiv \langle G_1 , G_2 \rangle_{L^2(\rho)} .
        \end{equation}
    
    Define a map $\Phi: L^2(\Omega, \mfk{S}, \mc{P}) \to C^1(\mb{R}^n , \mb{R}^m)$, $h \mapsto \Phi[h](x)$, and for each $t = 1 , \dots m$, the $t$-th component of $\Phi[h]$, ${(\Phi[h])}_t$, is given by $\mc{E}( h \overline{f_t(x)})$, the covariance of $h$ and $f_t(x)$. 
    Note that although our random variables in $L^2(\Omega, \mfk{S}, \mc{P}) $ and $F$ are real-valued, we still write $\overline{(\cdot)}$ in order to be consistent.
    Then, the image of $\Phi$, $\mc{H}(F) \equiv \Phi(L^2(\Omega, \mfk{S}, \mc{P}))$, is a Hilbert space with the induced inner product
        \begin{equation}
            \langle \Phi[h_1], \Phi[h_2] \rangle_{\mc{H}(F)} \equiv \mc{E}(h_1 \overline{h_2}) = \langle h_1, h_2 \rangle_{L^2(\Omega , \mfk{S}, \mc{P})},
        \end{equation}
    for any $h_1, h_2 \in H(F)$, where $H(F) \subset L^2(\Omega, \mfk{S}, \mc{P})$ is the closed $\mb{R}$-linear span of all $(F(x) \cdot w)$ for $x \in \mb{R}^n$ and $w \in \mb{R}^m$, which is exactly the orthogonal complement of $\mr{Ker}(\Phi)$. Let $\{h_j\}$ be a countable orthonormal basis of $H(F)$ and set $e_j = \Phi[h_j] \in C^1(\mb{R}^n , \mb{R}^m)$, we see that for any fixed $w \in \mb{R}^m$, and fixed $x \in \mb{R}^n$,
        \begin{equation}
            F(x) \cdot w = \sum_{j} \mc{E}\big((F(x) \cdot w) h_j\big)h_j = \sum_{j} \big( \Phi[h_j](x) \cdot w\big) h_j,
        \end{equation} 
    and equivalently,
    \begin{equation}\label{E: APP D Expansion of F}
        F(x)  = \sum_{j}  e_j(x)  h_j.
    \end{equation} 
    For any $\tilde{G} \in \mc{H}(F)$, we can write a convergent series in $\mc{H}(F)$, 
        \begin{equation}\label{E: APP D Expansion of H(F)}
            \tilde{G}(x) = \sum_{j} e_j(x) \langle \tilde{G}, e_j \rangle_{\mc{H}(F)} .
        \end{equation}
    Since we have the assumption $(A2)$ in Definition~\ref{D: Axiom 1} that $F$ is a $C^{3-}$-smooth Gaussian field, the convergence (\ref{E: APP D Expansion of H(F)}) is, at least, in locally $C^1$-sense. Then, we can get that for any $\epsilon > 0$ and any bounded open set $B \subset \mb{R}^n$,
        \begin{equation}\label{E: APP D Positive Approximation}
            \mc{P}(||F-\tilde{G}||_{C^1(B)}>\epsilon)>0.
        \end{equation}
    Hence, we need to show that $\mc{H}(F)$ is $C^1$-dense in $C^1(\mb{R}^n , \mb{R}^m)$ under some assumptions.
    
    On the other hand, there is a way to link the inner products $\langle \cdot , \cdot \rangle_{\mc{H}(\rho)}$ and $\langle \cdot , \cdot \rangle_{\mc{H}(F)}$ by the following isometry map, which will show that these two spaces, $\mc{H}_0(\rho)$ and $\mc{H}(F)$, and their inner products are actually the same. For any $G = {(g_1 , \dots, g_m)}^T \in L^2 _H(\rho)$, let 
        \begin{equation}\label{E: APP D Isometry Map}
            I(G) \equiv \int_{\mb{R}^n} G(\xi) \cdot \ dZ(\xi) = \sum_{i = 1}^m \int_{\mb{R}^n} g_i(\xi) \ dZ_i(\xi),
        \end{equation}
    then
        \begin{equation}
            \begin{split}
                ||I(G)||^2 _{L^2(\Omega , \mfk{S}, \mc{P})} &= \mc{E} \bigg(  \int_{\mb{R}^n} G(\xi_1) \cdot \ dZ(\xi_1) \int_{\mb{R}^n} \overline{G(\xi_2)} \cdot \ d\overline{Z(\xi_2)} \bigg) 
                \\  &= \sum_{1 \leq i_1, i_2 \leq m} \int_{\mb{R}^n} g_{i_1}(\xi) \overline{g_{i_2}(\xi)} \ d\rho_{i_1 i_2}(\xi) = ||G||^2 _{L^2(\rho)}.
            \end{split}
        \end{equation}
    Notice that $H(F) \subset I(L^2 _H(\rho))$. Actually, one can even show that $H(F) = I(L^2 _H (\rho))$.  This is because ${\{e^{i \langle x , \xi \rangle}\}}_{x \in \mb{R}^n}$ is a dense family as $\xi$-functions in $L^2_{\rho_{tt}}(\mb{R}^n)$ for each $t = 1, \dots,m$, which means that the linear combinations of functions in this family can approximate smooth functions with compact support. One then compares (\ref{E: APP D Spectral Decomposition}) and (\ref{E: APP D Isometry Map}).
    Also, one can check that 
        \begin{equation}
            \begin{split}
                {(\Phi[I(G)](x))}_{i_2} &= \mc{E}\big(I(G) \overline{f_{i_2}(x)} \big) = \mc{E} \bigg( \int_{\mb{R}^n} G(y) \cdot \ dZ(y) \int_{\mb{R}^n} e^{-i \langle x , \xi \rangle}\ d\overline{Z_{i_2}(\xi)} \bigg) 
            \\ &= \sum_{i_1 = 1}^m \int_{\mb{R}^n} e^{-i \langle x, \xi \rangle} g_{i_1}(\xi) \ d\rho_{i_1 i_2}(\xi) = {(\mc{F}_\rho(G)(x))}_{i_2},
            \end{split}
        \end{equation}
    and then
        \begin{equation}
            \begin{split}
                \langle \widehat{G_1} , \widehat{G_2}\rangle_{\mc{H}(\rho)} &= \langle G_1,G_2 \rangle_{L^2(\rho)} = \langle I(G_1), I(G_2) \rangle_{L^2(\Omega , \mfk{S}, \mc{P})}
                \\  & =  \langle \Phi[I(G_1)], \Phi[I(G_1)] \rangle_{\mc{H}(F)} = \langle \widehat{G_1} , \widehat{G_2}\rangle_{\mc{H}(F)}.
            \end{split}
        \end{equation}
    For similar reasons, one can show that $\mc{H}(F) = \mc{H}_0(\rho) $.

    We define a subset $\Sigma_\rho$ of $C^1(\mb{R}^n, \mb{R}^m)$ as follows. For each element $ H(x) \in \Sigma_\rho$, write $H(x)$ as ${(h_1(x), \dots, h_m(x))}^T $, and for each $t = 1, \dots,m$,  $h_t(x)$ has the form
        \begin{equation}\label{E: APP D Discrete Sum Set Sigma}
            h_t(x) = \sum_{p = 1} ^{q} \sum_{s=1} ^m \bigg( e^{-i \langle x, \xi_p \rangle} g_s(\xi_p) \rho_{st}(V_p )  +  e^{i \langle x, \xi_p \rangle} g_s(-\xi_p) \rho_{st}(-V_p) \bigg),
        \end{equation}
    where $V_p $ are Borel sets, $\xi_p \in V_p $, and $G = {(g_1, \dots,g_m)}^T \in L^2 _H (\rho)$. We emphasize that these $V_p, \xi_p$ are the same for diffrent $t = 1, \dots,m$. This $H(x)$ is actually a Riemann sum of $\mc{F}_\rho (G)$ in (\ref{E: APP D Fourier}) if $G$ is also continuous. Also, notice that $h_t(x)$ is real, because $g_s(-\xi_p ) = \overline{g_s(\xi_p )}$ and $\rho_{st}(-V_p )  = \overline{\rho_{st}(V_p ) }$.
    
    Our aim is to prove that under some assumptions, one can show that $\Sigma_\rho$, and hence $\mc{H}_0(\rho)$, is $C^1$-dense in $C^1(B, \mb{R}^m)$ for any bounded open set $B \subset \mb{R}^n$. Then, we can use (\ref{E: APP D Positive Approximation}) to get an approximation by $F(x)$ with positive probability. This strategy was sketched in Lemma 5.5 and Proposition 5.2 in~\cite{SW19}.
    
    We call that the $\mb{C}^m$-valued orthogonal random measure $Z(\cdot) = {(Z_1(\cdot), \dots , Z_m(\cdot))}^T$ in (\ref{E: APP D Spectral Decomposition}) is \textit{ $m$-balls absolute non-degenerate} if there are $m$ open balls, ${\{ B_{r_t}(y_t) \}}_{t=1} ^m$, such that for each $t = 1, \dots , m$, and for any nonempty open subset $U \subset B_{r_t}(y_t)$, 
        \begin{equation}
            Z_t(U) \notin \mr{Span}_{\mb{C}} \{ Z_1(U), \dots, Z_{t-1}(U), Z_{t+1}(U), \dots , Z_m(U) \},
        \end{equation}
    which is equivalent to saying that there are complex constants $c_{t1} (U), \dots, c_{tm} (U)$, such that
        \begin{equation}\label{E: APP D m nondegenerate constants}
            \mc{E}\bigg( (\sum_{i=1}^m c_{ti} (U) Z_i(U)) \ \overline{Z_s(U)} \bigg) = \sum_{i=1}^m c_{ti} (U) \rho_{is}(U)= \delta_{ts}.
        \end{equation}
    This also implies that $B_{r_t}(y_t)$ is in the interior of the compact support of $\rho_{tt}$.
    Morover, we requires that there are absolute positive constants $C_1(Z)$ and $C_2(Z)$ such that for any open subset $V \subset \cup_i B_{r_i}(y_i)$, $|\rho_{st}|(V) \leq C_1(Z) \cdot |\rho_{st}(V)|$ for any $s,t$; and for any $s,t$ and $U \subset B_{r_t}(y_t)$, $ \sum_{i=1}^m |c_{ti} (U) \rho_{is}(U)| \leq C_2(Z)$.
    
    \begin{example}\label{Example: APP D 1}
        If the random field $F$ has independent components, i.e., $Z_s(\cdot)$ is independent of $Z_t(\cdot)$ when $s \neq t$, this $m$-balls absolute non-degenerate condition is equivalent to saying that for each $t$, the interior of the compact support of $\rho_{tt}$ is nonempty. In this case, each $\rho_{tt}$ is also a nonnegative measure. One can choose $B_{r_t}(y_t)$ contained in the interior of the compact support of $\rho_{tt}$, and let $c_{ti}(U) = \delta_{ti} \cdot {\rho_{tt}(U)}^{-1}$ for all $t$ and $i$.
        Then, choose $C_1(Z)=1$ and $C_2(Z)=1$.

    \end{example}
    
    In general, the constant $C_2(Z)$ is quite tricky to obtain. One may use the following weaker but more explicit assumption.
    
    \begin{example}\label{Example: APP D 2}
        Assume that there is an open ball $B \subset \mb{R}^n$, and a positive constant $\lambda$, such that for any open subset $U \subset B$,
            \begin{equation}
                \lambda^{-1} \cdot |U| \cdot \mr{Id}_m \leq \rho (U) \leq \lambda \cdot |U| \cdot \mr{Id}_m,
            \end{equation}
        where $\rho(U)$ is the $m \times m $ matrix with elements $\rho_{i_1 i_2}(U)$. For the notation $A \geq B$, we mean that the smallest eigenvalue of $A-B$ is nonnegative. And $|U|$ is the volume of $U$ in $\mb{R}^n$. We can also replace $|\cdot|$ with any other measure that is mutually absolutely continuous with respect to $|\cdot|$.
        With this nondegeneracy assumption, we can choose $c(U)$ as the inverse of $\rho(U)$ and set a $C_2(Z)$ only depending on $\lambda$. And if we also have a $C_1(Z)$ satisfying the definition, then this random measure $Z(\cdot)$ satisfies this \textit{ $m$-balls absolute non-degenerate} assumption.	
    \end{example}

    According to the above definitions and examples, assume that we can get two universal positive constants $C_1(Z)$  and $C_2(Z)$ for the \textit{ $m$-balls absolute non-degenerate} condition, we then have the following theorem.

    \begin{theorem}\label{Thm: APP D}
        \textit{
            If $Z(\cdot)$ is \textit{ $m$-balls absolute non-degenerate}, then $\mc{H}_0(\rho)$ is $C^1$-dense in $C^1(B, \mb{R}^m)$ for any bounded open set $B \subset \mb{R}^n$.
        }
    \end{theorem}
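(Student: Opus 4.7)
The plan is to show $C^1(B)$-density of $\mc{H}_0(\rho)$ on any bounded open $B\subset\mb{R}^n$ by constructing, for each $t\in\{1,\dots,m\}$ and each frequency $\xi_0\in B_{r_t}(y_t)$, elements of $\mc{H}_0(\rho)$ whose $t$-th components $C^1$-approximate $\cos\langle\cdot,\xi_0\rangle$ and $\sin\langle\cdot,\xi_0\rangle$, while all other components stay close to zero. Real-linear combinations will then yield component-wise trigonometric polynomials with frequencies in $B_{r_t}(y_t)\cup(-B_{r_t}(y_t))$ in the $t$-th slot, after which a standard polynomial-approximation argument will finish the job.

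For the building block, fix $t$, $\xi_0\in B_{r_t}(y_t)$, and a small open neighborhood $V\subset B_{r_t}(y_t)$ of $\xi_0$, and define $G^{t,V}\in L^2_H(\rho)$ by
\begin{equation*}
g_s(\xi)=c_{ts}(V)\,\mathbf{1}_V(\xi)+\overline{c_{ts}(V)}\,\mathbf{1}_{-V}(\xi),\qquad s=1,\dots,m,
\end{equation*}
with $c_{ts}(V)$ the non-degeneracy coefficients from (\ref{E: APP D m nondegenerate constants}); Hermitian symmetry is built in. Using the substitution $\xi\mapsto-\xi$ together with the identity $\rho_{st'}(-U)=\overline{\rho_{st'}(U)}$ (a consequence of $k_{i_2 i_1}(x)=k_{i_1 i_2}(-x)$ and the Hermitian character of $\rho$), one obtains the manifestly real expression
\begin{equation*}
(\mc{F}_\rho G^{t,V})_{t'}(x)=2\,\mr{Re}\!\left[\sum_{s=1}^m c_{ts}(V)\int_V e^{-i\langle x,\xi\rangle}\,d\rho_{st'}(\xi)\right].
\end{equation*}
Freezing the oscillation at $\xi_0$, the leading term equals $2\delta_{tt'}\cos\langle x,\xi_0\rangle$ by (\ref{E: APP D m nondegenerate constants}). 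A Taylor bound $\|e^{-i\langle\cdot,\xi\rangle}-e^{-i\langle\cdot,\xi_0\rangle}\|_{C^1(B)}\le C_B\,\mr{diam}(V)$ for $\xi\in V$, combined with $\sum_s|c_{ts}(V)|\cdot|\rho_{st'}|(V)\le C_1(Z)\sum_s|c_{ts}(V)\rho_{st'}(V)|\le C_1(Z)\,C_2(Z)$, dominates the remainder in $C^1(B)$ by $C_B\,C_1(Z)\,C_2(Z)\,\mr{diam}(V)$. Letting $\mr{diam}(V)\to 0$ produces elements of $\mc{H}_0(\rho)$ arbitrarily $C^1(B)$-close to $2\cos\langle\cdot,\xi_0\rangle\,e_t$; the parallel recipe with $g_s=i\,c_{ts}(V)\mathbf{1}_V-i\,\overline{c_{ts}(V)}\mathbf{1}_{-V}$ delivers the $\sin$ analogue.

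Real-linear combinations of these blocks put every map $(p_1,\dots,p_m):B\to\mb{R}^m$, with $p_t$ a real trigonometric polynomial of frequencies in $B_{r_t}(y_t)\cup(-B_{r_t}(y_t))$, into the $C^1(B)$-closure of $\mc{H}_0(\rho)$. Density then reduces to showing that for any open ball $U\subset\mb{R}^n$, the span of $\{e^{i\langle\cdot,\xi\rangle}:\xi\in U\}$ is $C^1$-dense in $C^1(\overline B,\mb{C})$. Difference quotients in the $\xi$-parameter at a point $\xi_0\in U$ (which converge in $C^1(\overline B)$ since $B$ is bounded) put every $p(x)e^{i\langle x,\xi_0\rangle}$ with $p$ a polynomial into the closure; picking polynomials $q_k\to e^{-i\langle\cdot,\xi_0\rangle}$ in $C^1(\overline B)$ via Weierstrass, the products $p(x)\,q_k(x)\,e^{i\langle x,\xi_0\rangle}$ remain in the closure and converge in $C^1(\overline B)$ to $p(x)$, so every polynomial lies in the closure. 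The standard $C^1$-density of polynomials in $C^1(\overline B)$ then completes the argument.

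The main obstacle is the uniform $C^1$ error control in the building-block step. The Hermitian symmetrization of $G^{t,V}$ forces simultaneous work on $V$ and $-V$, and the off-diagonal cross-term contributions from $\rho_{st'}(V)$ with $s\ne t$ or $t'\ne t$ would blow up as $\mr{diam}(V)\to 0$ unless both quantitative ingredients of the $m$-balls non-degeneracy hypothesis---namely $C_1(Z)$ controlling total variation by the measure modulus, and $C_2(Z)$ controlling the inverse-coefficient sum $\sum_i|c_{ti}(V)\rho_{is}(V)|$---enter the remainder estimate simultaneously. These universal constants are precisely what the hypothesis was designed to supply.
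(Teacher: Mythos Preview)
Your proof is correct and rests on the same core computation as the paper's: define $G\in L^2_H(\rho)$ to be (piecewise) the constants $c_{ts}(V)$ on $V$ and their conjugates on $-V$, then control the discrepancy between $\mc{F}_\rho(G)$ and the target via the two constants $C_1(Z)$ and $C_2(Z)$. The organization differs. The paper fixes a target polynomial $p_t$ up front, produces a compactly supported smooth $\phi_t$ whose ordinary Fourier transform approximates $p_t$ (via derivatives of a bump at $y_t$ followed by multiplication by a polynomial approximant of $e^{i\langle\cdot,y_t\rangle}$), then passes to a Riemann sum and builds a single $G$ on the full partition. You instead isolate a one-piece building block $G^{t,V}$, shrink $V$ to put each $\cos\langle\cdot,\xi_0\rangle e_t$ and $\sin\langle\cdot,\xi_0\rangle e_t$ into the $C^1$-closure, and only afterwards pass to polynomials via difference quotients in $\xi$ together with Weierstrass. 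Your route is slightly more modular and avoids the Riemann-sum bookkeeping; the paper's route has the advantage of producing an explicit $G$ for a given target. Both pivot on exactly the same error estimate, and the bump-derivative trick in the paper and your difference-quotient step are Fourier-dual versions of the same idea.
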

    \begin{proof}
        By the assumption, we denote $B_{r_t}(y_t)$ by $V^t$. We assume that ${\{V^t\}}_{t=1} ^m$ are disjoint, otherwise we choose smaller open balls in them.
        Recall that $\rho(x) = \rho^*(x) = {(\rho(-x))}^T$. We can also assume that $V^t \cap ( -V^t) = \emptyset$, otherwise we choose smaller balls again. 
        The following strategies have also been sketched in Lemma 5.5 and Proposition 5.2 in~\cite{SW19} when $m=1$.
    
        Fix any $p(x) = {(p_1(x), \dots, p_m(x))}^T$ such that for each $t = 1, \dots , m$, $p_t(x)$ is a real polynomial in $x_1, \dots, x_n$. Then, for any $\epsilon >0$ and any bounded open set $B$, we need to find a $G \in L^2 _H (\rho)$ such that
            \begin{equation}
                ||{(\mc{F}_\rho(G)(x))}_t - p_t(x) ||_{C^1(B)} <\epsilon.
            \end{equation}
        We fix such $\epsilon$ and $B$ and then proceed.
    
        Let $\phi(\xi)$ be a smooth, nonnegative function supported in $B_1(0)$ such that  $\int_{\mb{R}^n} \phi(\xi) \ d\xi = 1$ and $\phi(\xi_1) = \phi(\xi_2)$ if $||\xi_1|| = ||\xi_2||$. For any $\delta_t \in (0,1)$ and any multi-index $\beta_t$, we see that the function
            \begin{equation}
                \phi_{\delta_t,\beta_t}(\xi) \equiv \frac{\partial^{\beta_t} }{\partial \xi^{\beta_t} } \bigg( \frac{1}{{(r_t \delta_t)}^n} \phi\bigg(\frac{\xi - y_t}{r_t \delta_t}\bigg)\bigg)
            \end{equation}
        is supported in $V^t = B_{r_t}(y_t)$. Now, we do the Fourier transform on ${(-i)}^{\beta_t} \phi_{\delta_t,\beta_t}(\xi)$, and then get a function
        \begin{equation}\label{E: APP D Approximate Polynomials}
            \begin{split}
                \tilde{g}_{\delta_t, \beta_t}(x) &\equiv {(-i)}^{\beta_t} \cdot \int_{\mb{R}^n} (\phi_{\delta_t,\beta_t}(\xi) ) e^{-i\langle x, \xi \rangle} \ d\xi
                \\  &= x^{\beta_t} \cdot \int_{\mb{R}^n } \frac{1}{{(r_t \delta_t)}^n} \phi\bigg(\frac{\xi - y_t}{r_t \delta_t}\bigg)  e^{-i \langle x , \xi \rangle } \ d\xi
                \\  & \to x^{\beta_t} \cdot e^{-i \langle x , y_t \rangle },
            \end{split}
        \end{equation}
        as $\delta_t \to 0^+$. Notice that, for any bounded open set $B \subset \mb{R}^n$, the convergence of (\ref{E: APP D Approximate Polynomials}) is in $C^1(B , \mb{C})$ as functions in $x$.
        Here, for $g \in C^1(B , \mb{C})$, we mean that $g = u + iv$ for $u,v \in C^1(B,\mb{R})$, and the $C^1$-norm is induced by $C^1$-norms on $u,v$ components.
    
        We first consider a polynomial with complex coefficients, which we denote as $q_\epsilon(x)$, such that $|| q_\epsilon(x) - e^{i \langle x,y_t \rangle}||_{C^1(B)} \leq \epsilon$.
        Then, by (\ref{E: APP D Approximate Polynomials}), one can find a smooth complex-valued function $\phi_t(\xi)$ supported in $V^t$ such that the Fourier transform of $\phi_t$, i.e., $\mc{F}(\phi_t)(x) \equiv \int_{\mb{R}^n} \phi_t(\xi) e^{-i\langle x, \xi \rangle} \ d\xi$, satisfies that 
            \begin{equation}
                ||\mc{F}(\phi_t)(x) - p_t(x) q_\epsilon(x)   e^{-i \langle x , y_t \rangle }||_{C^1(B)} \leq \epsilon.
            \end{equation}
        Hence,
            \begin{equation}\label{E: APP D Close to Polynomials}
                ||\mc{F}(\phi_t)(x) - p_t(x)||_{C^1(B)} \leq C(||p_t||_{C^1(B)}, ||y_t|| ) \cdot \epsilon,
            \end{equation}
        where  $C(||p_t||_{C^1(B)}, ||y_t|| )$ is a positive constant depending on the $C^1$-norm of $p_t$ on $B$ and the length of $y_t$.
        We then choose $\varphi_t(\xi) = (\phi_t(\xi) + \overline{\phi_t(-\xi)})/2$, whose Fourier transform is the real part of $\mc{F}(\phi_t)(x)$ and (\ref{E: APP D Close to Polynomials}) still holds for $\mc{F}(\varphi_t)(x)$. Notice that $\varphi_t(-\xi) = \overline{\varphi_t(\xi)}$ for any $\xi \in \mb{R}^n$ and $\varphi_t(\xi)$ is supported in $V^t \cup (-V^t)$.
    
        Now, we approximate $\mc{F}(\varphi_t)(x)$ in $C^1(B)$ by a Riemann sum. That is, for the chosen $\epsilon>0$, we have another function 
            \begin{equation}
                R(\varphi_t)(x) = \frac{1}{2}\sum_{p=1} ^{q_t} \bigg( e^{-i \langle x, \xi_p ^t \rangle} \phi_t(\xi_p ^t) |V_p ^t|  +  e^{i \langle x, \xi_p ^t \rangle} \overline{\phi_t(\xi_p ^t)} |V_p ^t|  \bigg),
            \end{equation}
        such that $||\mc{F}(\varphi_t)(x) - R(\varphi_t)(x)||_{C^1(B)} \leq \epsilon$, where ${\{V_p ^t\}}_{p=1} ^{q_t}$ is a disjoint open subdivision of $V^t$ and each $\xi_p ^t$ is a point in $V_p ^t$. Also, this subdivision is so narrow such that for any $\xi \in V_p ^t$ and any $x \in B$, 
            \begin{equation}\label{E: APP D Small Division 1}
                |e^{-i \langle x , \xi_p ^t \rangle} - e^{-i \langle x , \xi \rangle}| \leq \epsilon \cdot {\bigg( \sum_{s=1}^m ||\phi_s||_{L^1(\mb{R}^n)} \bigg)}^{-1},
            \end{equation}
            \begin{equation}\label{E: APP D Small Division 2}
                \  |\xi_ p e^{-i \langle x , \xi_p ^t \rangle} - \xi e^{-i \langle x , \xi \rangle}| \leq \epsilon \cdot {\bigg( \sum_{s=1}^m ||\phi_s||_{L^1(\mb{R}^n)}  \bigg)}^{-1},
            \end{equation}
        and
            \begin{equation}
                \sum_{l=1}^m \sum_{V_p ^l}   \cdot | \phi_l(\xi_p ^l) | \cdot | V_p ^l| \leq 2 \cdot \bigg(  \sum_{l=1}^m ||\phi_l||_{L^1(\mb{R}^n)} \bigg).
            \end{equation}
        We then collect all $\{V_p ^t\}$ for all $t = 1, \dots ,m$, and get a family of disjoint open sets $\{V_p\}$ and their corresponding $\{\xi_p\}$. Notice that $\phi_t(\xi_p ^s) = 0$ for $s \neq t$. We denote $q = \sum_{t=1} ^m q_t$.
        
        Compare $R(\varphi_t)(x)$ and functions in $\Sigma_\rho$ of the form (\ref{E: APP D Discrete Sum Set Sigma}), we consider a function $G = {(g_1, \dots, g_m)}^T \in L^2 _H (\rho)$ such that $G(x) = 0$ if $x$ is not contained in any $V_p$ or $-V_p$.
        And for each $g_s$ and each $V_p ^t$, let $g_s$ be the constant $c_{ts}(V_p ^t) (\phi_t(\xi_p ^t) |V_p ^t| )$ on $V_p ^t$ and  $\overline{c_{ts} (V_p ^t) (\phi_t(\xi_p ^t) |V_p ^t| )}$ on $(-V_p ^t)$, where the constants $c_{ts} (V_p ^t)$ are from (\ref{E: APP D m nondegenerate constants}). This $G$ is well-defined and in $ L^2 _H (\rho)$.
    
        By the transform (\ref{E: APP D Discrete Sum Set Sigma}), we see that for each $t = 1, \dots, m$,
            \begin{equation}
                \begin{split}
                    &{(H(G)(x))}_t \equiv \sum_{p = 1} ^{q} \sum_{s=1} ^m \bigg( e^{-i \langle x, \xi_p \rangle} g_s(\xi_p) \rho_{st}(V_p )  +  e^{i \langle x, \xi_p \rangle} g_s(-\xi_p) \rho_{st}(-V_p) \bigg)
                    \\  & = 2 \mr{Re} \sum_{l=1}^m \sum_{V_p ^l} \sum_{s=1} ^m    e^{-i \langle x, \xi_p ^l \rangle} g_s(\xi_p ^l) \rho_{st}(V_p ^l) 
                    \\  & = 2 \mr{Re} \sum_{l=1}^m \sum_{V_p ^l} \sum_{s=1} ^m    e^{-i \langle x, \xi_p ^l \rangle} c_{ls}(V_p ^l) \rho_{st}(V_p ^l) (\phi_l(\xi_p ^l) |V_p ^l| ) 
                    \\  & = 2 \mr{Re} \sum_{l=1}^m \sum_{V_p ^l}    e^{-i \langle x, \xi_p ^l \rangle} \delta_{lt} (\phi_l(\xi_p ^l) |V_p ^l| ) 
                    \\  & = 2 \mr{Re} \sum_{V_p ^t}    e^{-i \langle x, \xi_p ^t \rangle}  (\phi_t(\xi_p ^t) |V_p ^t| ) 
                       = 2 R(\varphi_t)(x),
                \end{split}
            \end{equation}
        where $\mr{Re}(\cdot)$ means the real part of a complex number. Hence, we see that $R(\varphi_t)(x)$ is actually expressed by $G/2$ in the transform (\ref{E: APP D Discrete Sum Set Sigma}). 
        
        Then, we compare $H(G)$ and the $\rho$-Fourier transform of $G$. Recall that the subdivision $\{V_p\}$ satisfies that for any $\xi \in V_p$, both $|e^{-i \langle x , \xi_p  \rangle} - e^{-i \langle x , \xi \rangle}|$ and $|\xi_p e^{-i \langle x , \xi_p \rangle} - \xi e^{-i \langle x , \xi \rangle}|$ are bounded by the chosen $\epsilon$ and the constant related to $\phi_l$ in (\ref{E: APP D Small Division 1}) and (\ref{E: APP D Small Division 2}).
        The absolute non-degeneracy of $Z(\cdot)$ assumption is that $|\rho_{st}|(U) \leq C_1(Z)\cdot |\rho_{st}(U)|$ for any $s,t$. Hence, by this choice of the subdivision, and notice that $G$ is a constant on each $V_p$, we then have that
            \begin{equation}
                \begin{split}
                    & \qquad \frac{1}{2}\big| {(\mc{F}_\rho(G)(x))}_{t} - {(H(G)(x) )}_t \big| 
                    =\bigg| \mr{Re}\sum_{p=1}^q \sum_{s = 1}^m \int_{V_p} e^{-i \langle x, \xi \rangle} g_{s}(\xi) -  e^{-i \langle x, \xi_p \rangle} g_{s}(\xi_p)\ d\rho_{st}(\xi) \bigg|
                    \\  &= \bigg| \mr{Re}\sum_{p=1}^q \sum_{s = 1}^m g_s(\xi_p) \int_{V_p} e^{-i \langle x, \xi \rangle} -  e^{-i \langle x, \xi_p \rangle} \ d\rho_{st}(\xi) \bigg|
                    \\  &\leq  \epsilon \cdot {\bigg( \sum_{s=1}^m ||\phi_s||_{L^1(\mb{R}^n)} \bigg)}^{-1} \cdot\sum_{p=1}^q \sum_{s = 1}^m |g_s(\xi_p)| |\rho_{st}|(V_p)
                    \\  &\leq  \epsilon \cdot {\bigg( \sum_{s=1}^m ||\phi_s||_{L^1(\mb{R}^n)} \bigg)}^{-1}\cdot C_1(Z) \sum_{p=1}^q \sum_{s = 1}^m |g_s(\xi_p) \rho_{st}(V_p) | 
                    \\  &= \epsilon \cdot {\bigg( \sum_{s=1}^m ||\phi_s||_{L^1(\mb{R}^n)} \bigg)}^{-1}\cdot C_1(Z) \sum_{l=1}^m \sum_{V_p ^l} \sum_{s = 1}^m |c_{ls}(V_p ^l) \rho_{st}(V_p ^l) |  \cdot | \phi_l(\xi_p ^l) | \cdot | V_p ^l| 
                    \\  &\leq \epsilon \cdot {\bigg( \sum_{s=1}^m ||\phi_s||_{L^1(\mb{R}^n)} \bigg)}^{-1} \cdot  C_1(Z)C_2(Z) \sum_{l=1}^m \sum_{V_p ^l}   \cdot | \phi_l(\xi_p ^l) | \cdot | V_p ^l| 
                    \\  &\leq 2\epsilon \cdot C_1(Z)C_2(Z).
                \end{split}
            \end{equation}
        One can do similar estimates on $\big|\nabla[{(\mc{F}_\rho(G)(x))}_{t} - {(H(G)(x) )}_t] \big|$. Hence, there is a positive constant $C(C_1(Z),C_2(Z))$, such that
            \begin{equation}
                ||{(\mc{F}_\rho(G)(x))}_t - {(H(G)(x) )}_t  ||_{C^1(B)} \leq C\big( C_1(Z),C_2(Z) \big) \cdot \epsilon.
            \end{equation}        
        Then,
            \begin{equation}
                ||{(\mc{F}_\rho(G)(x))}_t - p_t(x) ||_{C^1(B)} \leq C\big( ||p_t||_{C^1(B)} , |y_t|,C_1(Z),C_2(Z) \big) \cdot \epsilon.
            \end{equation}
        Since any map in $C^1(B,\mb{R}^m)$ can be $C^1$-approximated by polynomial vectors $p(x)$, we finish the proof.
    \end{proof}

    Now, we can prove the positivity of the limiting constant $\nu_F$ in Theorem~\ref{Thm: Euclidean Random Field}. Notice that by Lemma~\ref{L: Integral Sandwich}, we have that for all $R >0$, 
        \begin{equation}
            \frac{\mc{E}(N(R;F))}{|C_R|} \leq \nu_F.
        \end{equation}
    Hence, we only need to show that there is an $R = R_0$ such that $\mc{P}(N(R_0;F)>0)>0$.
    
    According to (\ref{E: APP D Positive Approximation}) (also see Proposition 5.2 of~\cite{SW19}), together with Theorem~\ref{Thm: APP D} and the fact that $\mc{H}_0(\rho) = \mc{H}(F)$, we see that for any $\epsilon > 0 $ and any $G \in C^1(\mb{R}^n, \mb{R}^m)$,
        \begin{equation}
            \mc{P}(||F - G||_{C^1(B_{10}(0))} < \epsilon ) >0,
        \end{equation}
    where $B_{10}(0)$ is the ball with center $0$ and radius $10$ in $\mb{R}^n$.
    We consider 
    \begin{equation}
        G = (g_1 , \dots,g_m) \equiv (x_1, \dots,x_{m-1},1- |x_m|^2 -|x_{m+1}|^2 - \dots |x_n|^2),
    \end{equation}
    so that $Z(G)$ is a unit $\mb{S}^{n-m}$ contained in the linear subspace of coordinates $x_m, \dots,x_n$.
    Hence, there is an $\epsilon = \epsilon(n,m)$ chosen in Lemma~\ref{L: Continuous Stability} such that
        \begin{equation}
            \mc{P}(N(10;F) \geq 1) \geq \mc{P}(||F - G||_{C^1(C_{B_{10}(0)})} < \epsilon )>0.
        \end{equation}
    This shows the positivity of $\nu_F$. 
    
    Moreover, under our assumptions, by Lemma~\ref{L: Continuous Stability}, there is a component $\gamma$ of $Z(F)$ in $B_{10}(0)$ that is $C^1$-isotopic to the component of $Z(G)$, which is $\mb{S}^{n-m}$ for this $G$. But actually, one can replace $G$ here with any $G \in C^1(B_{10}(0), \mb{R}^m)$ such that $Z(G) \cap B_1(0) \neq \emptyset$ and $0$ is a regular value for $G$.
    For similar reasons, one can show that for any $C^1$-isotopic type $c$ which can be realized as a regular connected component of some $Z(G)$ in $B_1(0)$, we always have that
        \begin{equation}
            \mc{P}(N(10;F, H) \geq 1) \geq \mc{P}(||F - G||_{C^1(C_{B_{10}(0)})} < \epsilon )>0.
        \end{equation}
    
    \begin{remark}
        We can also show that $\nu_F>0$ and $\nu_{F,c}>0$ when $F$ consists of independent $f_i$, $i =1 ,\dots,m$, and each $f_i$ is a frequency $1$ random wave on $\mb{R}^n$, i.e.,
            \begin{equation}
                \mc{E}(f_{i_1}(u) f_{i_2}(v)) = \delta_{i_1 i_2} \cdot \int_{\mb{S}^{n-1}} e^{i \langle u-v , \xi \rangle } \ d\mc{H}^{n-1}(\xi),
            \end{equation}
        and solves
            \begin{equation}
                \Delta f_i + f_i = 0.
            \end{equation}
        In this case, one can use a similar proof for Theorem 1 in~\cite{CS19}, which is in their section 3. The rough idea is as follows. We consider an arbitrary $(n-m)$-dimensional connected closed submanifold $\gamma$ of $\mb{R}^{n+q}$ fully contained in $B_1(0)$, and this $\gamma$ is also realized as a regular connected component of zero sets of some $C^1$-smooth function $G$, i.e., $\gamma \subset Z(G) \cap B_1(0)$, and $\nabla G$ is of full rank on $\gamma$.
        Our goal is to modify this $\gamma$ such that it is $C^1$-isotopic to another $\tilde{\gamma}$ which can be expressed as the transversal intersection of $m$ closed analytic hypersurfaces, and these $m$ closed hypersurfaces also enclose $m$ bounded domains with the same first Dirichlet eigenvalue.
    
        Denote that $G = (g_1 , \dots g_m)$ and assume that $0$ is a regular value of each $g_i$, otherwise one just replaces each $g_i$ with some $g_i+t_i$ with $t_i$ small, and get a new component $C^1$-isotopic to the original $\gamma$. We do not know whether each $Z(g_i)$ is a closed hypersurface yet. But since each $Z(g_i)$ is a regular hypersurface, one can replace each $g_i$ with $\tilde{g}_i \equiv {g_i(x)}^2 + \epsilon_i |x|^2 - \delta_i ^2$ for very small $\epsilon_i$ and very small $\delta_i$, such that $0$ is still a regular value of $\tilde{g}_i$ by Sard's theorem. 
        Notice that $Z(\tilde{g}_i) \subset B_{\delta_i/\epsilon_i}(0)$. Hence, we get a new component $\tilde{\gamma}$ that can be realized as a part of the intersection of those $Z(\tilde{g}_i)$ with each $Z(\tilde{g}_i)$ bounded. This $\tilde{\gamma}$ is also $C^1$-isotopic to $\gamma$ since $\epsilon_i$ and $\delta_i$ are small. 
        We can also assume that each $\tilde{g}_i$ is $C^\infty$-smooth.

        Assume that $\tilde{\gamma} = \cap_i c_i$, where each $c_i$ is a bounded connected component of $Z(\tilde{g}_i)$ and is also a closed hypersurface. Let $A_i$ be the bounded component of $\mb{R}^n \backslash c_i$. Now,  for each $A_i$, one can make smooth connected sum with long and thin necks to some large balls $B_i$ and avoid touching $\tilde{\gamma}$.
        We do these connected sums and get new bounded sets $\mc{A}_i$, such that all these ${\{\mc{A}_i\}}_{i=1} ^m$ have the same first Dirichlet eigenvalue $\lambda$. 
        This same first Dirichlet eigenvalue property is possible, because the first Dirichlet eigenvalue for a given domain is always larger than the first eigenvalues of domains which contain it, and we can adjust the size of each ball $B_i$ to let all $\mc{A}_i$ have the same first eigenvalue.
        Now, $\tilde{\gamma}$ becomes a part of the transversal intersection of those $\pa \mc{A}_i$.
        
        For each $\mc{A}_i$, we assume that $\partial \mc{A}_i$ is realized as a component of the regular zero set of another smooth function $\bar{g}_i$. Approximate each $\bar{g_i}$ by another analytic function $\mathbf{g}_i$ defined in a neighborhood of $\partial \mc{A}_i$, we see that the set $\{\mathbf{g}_i = 0\}$ has a bounded component $\widetilde{c_i}$ which is very close to $\partial \mc{A}_i$, and these $\widetilde{c_i}$ form another transversal intersection $\tilde{\gamma}_1$, which is $C^1$-isotopic to $\tilde{\gamma}$. 
        The existence of such an analytic $\mathbf{g}_i$ is by the same arguments as proving Theorem 1 on page 7 of~\cite{CS19}, where they used the Whitney approximation Theorem.
    
        The new problem here is that $\widetilde{\mc{A}_i}$, the bounded component of $\mb{R}^n \backslash \widetilde{c_i}$, may not have the same first Dirichlet eigenvalue. To fix this, since $\mc{A}_i$ have the same first Dirichlet eigenvalue, and each $\mc{A}_i$ is very close to $\widetilde{\mc{A}_i}$, we may scale $\widetilde{\mc{A}_i}$ a little, or equivalently, replace $\mathbf{g}_i(x)$ with $\mathbf{g}_i(\eta_i x)$ for some $\eta_i$ close to $1$, and this small scaling will not affect the transversal intersection $\tilde{\gamma}_1$ up to another $C^1$-isotopy.

        To summarize, we get a $\tilde{\gamma}_1$ in the same $C^1$-isotopy class as the original $\gamma$, which can also be expressed as the transversal intersection of analytic hypersurfaces $\widetilde{c_i} = \partial \widetilde{\mc{A}_i}$, where all these $\widetilde{\mc{A}_i}$ are bounded and have the same first Dirichlet eigenvalue.
    
        Then, the remaining arguments follow easily from the proof on page 8 of~\cite{CS19}.

    \end{remark}


    \section{Assumptions (A3) in Definition~\ref{D: Axiom 1} and (B4) in Definition~\ref{D: Axiom 3}}\label{APP: 2 Point}
    
    As mentioned previously, we can use $(A1)$ and $(A2)$ to deduce $(A3)$ in Definition~\ref{D: Axiom 1}. Let us explain it more. 
    The two-point  $m \times m$ covariance matrix $\mr{Cov}(x,y)$ is defined by
        \begin{equation}
            {(\mr{Cov}(x,y))}_{i_1 i_2} \equiv \ \mc{E}(f_{i_1}(x)f_{i_2}(y)), \ i_1,i_2 = 1, \dots,m.
        \end{equation}
    \begin{theorem}\label{Thm: APP C 1}
        \textit{
        Assume that $F : C_{R+1} \to \mb{R}^m$ satisfies \textit{ $(R; M,k_1)$-assumptions} in Definition~\ref{D: Axiom 1}. Then, there is a $\delta = \delta(n,m,M,k_1) \in (0,1)$ such that when $x \in C_R$ and $||x-y|| \leq \delta$, we have that for any $\eta \in \mb{S}^{m-1} \subset \mb{R}^m$,
            \begin{equation}\label{E: APP C Smallest Eigenvalue}
                \eta^T \cdot  \big(\mr{Cov}(y,y) - \mr{Cov}(y,x) {\mr{Cov}(x,x)}^{-1} \mr{Cov}(x,y) \big) \cdot \eta \geq \frac{k_1}{2}  \cdot ||y-x||^2 .
            \end{equation}
        In particular, there is a $k_2 = k_2(n,m,M,k_1)>0$ such that when $x\in C_R$ and $||x-y|| <\delta$, then
            \begin{equation}\label{E: APP C Local (A3) Condition}
                p_{(F(x),F(y))}(0,0) \leq \frac{k_2}{||x-y||^m} ,
            \end{equation}
        which also implies that the joint distribution of $(F(x),F(y))$ is non-degenerate if $||x-y|| \leq \delta$.
        }
    \end{theorem}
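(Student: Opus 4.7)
My plan is to recognize the matrix $\mr{Cov}(y,y) - \mr{Cov}(y,x)\mr{Cov}(x,x)^{-1}\mr{Cov}(x,y)$ as the conditional covariance matrix $\mr{Cov}(F(y) \mid F(x))$ of the Gaussian vector $F(y)$ given $F(x)$. Thus, denoting $u = y - x$ and $v = u/\|u\|$, the quadratic form in (\ref{E: APP C Smallest Eigenvalue}) equals $\mr{Var}(\eta \cdot F(y) \mid F(x))$, which is invariant under shifting $F(y)$ by any function of $F(x)$, and in particular equals $\mr{Var}(\eta \cdot (F(y) - F(x)) \mid F(x))$.

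First, I would use a Taylor expansion of $F$ (whose smoothness is controlled by $(A2)$) to write
\begin{equation*}
\eta \cdot (F(y) - F(x)) = \|u\| \cdot Z + R, \qquad Z := \eta \cdot \nabla F(x) \cdot v,
\end{equation*}
where the remainder $R = \eta \cdot \int_0^1 (1-t)\nabla^2 F(x+tu)(u,u)\,dt$ satisfies the variance estimate $\mc{E}(R^2) \leq C(n,m,M)\|u\|^4$ by $(A2)$. Next, I would apply assumption $(A1)$: taking the test vector $(\eta', \xi) = (0, \eta \otimes v) \in \mb{R}^m \times \mb{R}^{nm}$ of unit norm, $(A1)$ forces the joint Gaussian vector $(F(x), Z)$ to satisfy a uniform nondegeneracy of size $k_1$. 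A direct Schur complement argument applied to this $(m+1)$-dimensional Gaussian vector then shows that $\mr{Var}(Z \mid F(x)) \geq k_1$; the key point is that for any $\alpha \in \mb{R}^m$ and the scalar coefficient $1$ on $Z$, the linear combination $Z - \alpha \cdot F(x)$ has coefficient vector of norm $\geq 1$ in the original $(F(x), \nabla F(x))$-coordinates, so $(A1)$ bounds its variance below by $k_1$.

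Combining these, I would use the reverse-triangle-type inequality $\mr{Var}(X+Y \mid F(x)) \geq \bigl(\sqrt{\mr{Var}(X\mid F(x))} - \sqrt{\mr{Var}(Y\mid F(x))}\bigr)^2$ with $X = \|u\| Z$ and $Y = R$ to obtain
\begin{equation*}
\mr{Var}(\eta \cdot F(y) \mid F(x)) \geq \bigl(\|u\|\sqrt{k_1} - \|u\|^2\sqrt{C(n,m,M)}\bigr)^2,
\end{equation*}
which is $\geq (k_1/2)\|u\|^2$ once $\|u\| \leq \delta$ for a sufficiently small $\delta = \delta(n,m,M,k_1)$. For the density bound (\ref{E: APP C Local (A3) Condition}), I would apply the Schur complement identity $\det \Lambda(x,y) = \det \mr{Cov}(x,x)\cdot \det(\mr{Cov}(y,y) - \mr{Cov}(y,x)\mr{Cov}(x,x)^{-1}\mr{Cov}(x,y))$; the first factor is bounded below by $k_1^m$ by $(A1)$, and the second is $\geq (k_1/2)^m \|u\|^{2m}$ by (\ref{E: APP C Smallest Eigenvalue}), so $p_{(F(x),F(y))}(0,0) \leq ((2\pi)^m\sqrt{\det\Lambda})^{-1} \leq k_2/\|x-y\|^m$.

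The main technical obstacle is verifying that the cross term between the linear part and the quadratic remainder does not spoil the lower bound on the conditional variance, since covariances can be negative; the $(\sqrt{a}-\sqrt{b})^2$ inequality above circumvents this cleanly by reducing everything to individual variance estimates, which is why restricting to $\|u\|$ smaller than a threshold $\delta$ depending on the ratio $k_1/C(n,m,M)$ is both natural and sufficient.
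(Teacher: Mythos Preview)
Your proof is correct and shares the same core idea as the paper's: identify the Schur complement as a conditional variance, Taylor-expand to isolate the linear term $Z=\eta\cdot\nabla F(x)\cdot v$, use $(A1)$ to get $\mr{Var}(Z\mid F(x))\geq k_1$, and control the quadratic remainder via $(A2)$. The packaging differs slightly: the paper expands the matrix-valued function $t\mapsto k_w(t)=\mr{Cov}(x+tw,x+tw)-\mr{Cov}(x+tw,x)\,\mr{Cov}(x,x)^{-1}\mr{Cov}(x,x+tw)$ to second order in $t$, computes $\eta^T k_w''(0)\,\eta$ explicitly as $2\,\mr{Var}(Z\mid F(x))$, and then invokes the $C^{3-}$ regularity of the kernel to bound $k_w''(\xi)-k_w''(0)$ by $C\sqrt{|\xi|}$. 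Your field-side expansion with the integral Taylor remainder and the reverse-triangle inequality for conditional standard deviations is a bit more direct, and in fact only consumes the $|\alpha|\leq 2$ part of $(A2)$ rather than the full $C^{3-}$ assumption needed for the paper's H\"older estimate on $k_w''$.
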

    \begin{proof}
        Notice that (\ref{E: APP C Local (A3) Condition}) follows from (\ref{E: APP C Smallest Eigenvalue}) by elementary row operations in calculating the determinant of the covariance kernel for the joint distribution of $(F(x), F(y))$.
        To prove (\ref{E: APP C Smallest Eigenvalue}), we fix an arbitrary $x\in C_R$. For any $w = {(w_1, \dots, w_n)}^T \in \mb{S}^{n-1}$, we define an $m \times m$ symmetric matrix-valued function on $[-1,1] \subset \mb{R}$:
            \begin{equation}
                k_w(t) \equiv \mr{Cov}(x +t \cdot w,x + t \cdot w) - \mr{Cov}(x+ t \cdot w,x) {\mr{Cov}(x,x)}^{-1} \mr{Cov}(x,x+t \cdot w) .
            \end{equation}
        Since $F$ satisfies $(A1)$ in Definition~\ref{D: Axiom 1}, $\mr{Cov}(x,x)$ is invertible and hence $k_w(t)$ is well defined on $[-1,1]$.
        It is easy to see that $k_w(0) = 0$ and $k_w '(0) = 0$. Then,
            \begin{equation}\label{E: APP C Second Derivative}
                \begin{split}
                    & \quad {(k_w ''(t) )}_{i_1 i_2}
                \\     &= \mc{E}((\nabla^2 _{w,w}f_{i_1})(x+tw) f_{i_2}(x+tw)) + \mc{E}(f_{i_1}(x+tw) (\nabla^2 _{w,w}f_{i_2})(x+tw)) 
                \\	& \quad +  2 \mc{E}((\nabla_w f_{i_1})(x+tw) (\nabla_w f_{i_2})(x+tw)) 
                \\	& \quad -\sum_{p,q} \mc{E}((\nabla^2 _{w,w}f_{i_1})(x+tw)f_{p}(x) ) {({\mr{Cov}(x,x)}^{-1})}_{pq} {(\mr{Cov}(x,x+t \cdot w) )}_{q i_2}
                \\	& \quad -\sum_{p,q} {(\mr{Cov}(x+ t \cdot w,x) )}_{i_1 p} {({\mr{Cov}(x,x)}^{-1})}_{pq}  \mc{E}(f_{q}(x)(\nabla^2 _{w,w}f_{i_2})(x+tw) )
                \\	& \quad -  2\sum_{p,q} \mc{E}((\nabla_w f_{i_1})(x+tw)f_{p}(x) ) \cdot  {( {\mr{Cov}(x,x)}^{-1} )}_{pq} \cdot  \mc{E}(f_{q}(x)(\nabla _{w}f_{i_2})(x+tw) ).
                \end{split}
            \end{equation}
        By Cauchy's mean value theorem, we have that for any $i_1, i_2 = 1, \dots , m$, there is a $\xi_{i_1 i_2}$ between $0$ and $t$, such that
            \begin{equation}
                {(k_w(t))}_{i_1 i_2} = \frac{t^2}{2}  \cdot {(k_w ''(\xi_{i_1 i_2}))}_{i_1 i_2} . 
            \end{equation}
        Furthermore, notice that $F$ is $C^{3-}$-smooth, by assumption $(A2)$, we have that
            \begin{equation}
                | {(k_w ''(\xi_{i_1 i_2}))}_{i_1 i_2} - {(k_w '' (0) )}_{i_1 i_2}| \leq \sqrt{\xi_{i_1i_2}} \cdot C_1 \leq \sqrt{t} \cdot C_1 
            \end{equation}
        for some positive constant $C_1 = C_1(n,m,M)$. We set the remaining term as the symmetric matrix $R$ with elements $ R_{i_1i_2}\equiv t^{-1/2} \cdot [{(k_w ''(\xi_{i_1 i_2}))}_{i_1i_2} - {( k_w '' (0))}_{i_1 i_2}]$, so that 
            \begin{equation}\label{E: APP C Expansion}
                k_w(t) = \frac{t^2}{2} \big(k_w ''(0) + \sqrt{t} R \big) .
            \end{equation}
        For the expression of $k_w '' (0)$, we set $t = 0$ in (\ref{E: APP C Second Derivative}) and then get that
            \begin{equation}
                \begin{split}
                    {(k_w ''(0))}_{i_1 i_2} &= 2 \big[ \mc{E}((\nabla_w f_{i_1})(x) (\nabla_w f_{i_2})(x)) 
                \\ &\quad -  \sum_{p,q} \mc{E}((\nabla_w f_{i_1})(x)f_{p}(x) ) \cdot {( {\mr{Cov}(x,x)}^{-1} )}_{pq}  \cdot  \mc{E}(f_{q}(x)(\nabla _{w}f_{i_2})(x) ) \big].
                \end{split}
            \end{equation}
        For any $\eta = {(\eta_1, \dots ,\eta_m)}^T\in \mb{S}^{m-1}$, we let
            \begin{equation}
                v_\eta = \sum_{i=1}^m \eta_i  ((\nabla_w f_i)(x)) = \sum_{i=1}^m \sum_{j=1} ^n \eta_i w_j ((\pa_j f_i)(x)).
            \end{equation}
        Notice that $\mr{Cov}(x,x)$ is a positive definite matrix, so it can be written as the square 
        of another positive definite matrix, i.e., $\mr{Cov}(x,x) = B^2$ and $B$ is positive definite. Set the Gaussian vector $G = (g_1 ,\dots ,g_m) \equiv {F(x)}^T \cdot B^{-1}$ so that $\mc{E}(g_{i_1} g_{i_2}) = \delta_{i_1i_2}$.
        Then,
            \begin{equation}
                \begin{split}
                    \frac{1}{2}\eta^T \cdot k_w ''(0) \cdot \eta &= \mc{E}(v_\eta^2) - \sum_p \mc{E}(v_\eta g_p) \cdot \mc{E}(g_p v_\eta)
                \\	&= \mc{E}\big[{\big(v_\eta - \sum_p g_p  \cdot \mc{E}(v_\eta g_p)\big)}^2 \big].
                \end{split}
            \end{equation}
        Notice that $v_\eta$ is a linear combination of $\nabla F(x)$ terms while $\sum_p g_p  \cdot \mc{E}(v_\eta g_p)$ is a linear combination of $F(x)$ terms. Since the joint distribution of $(F(x),\nabla F(x))$ is non-degenerate by $(A1)$ of Definition~\ref{D: Axiom 1}, we see that
            \begin{equation}
                \mc{E}\big[{\big(v_\eta - \sum_p g_p  \cdot \mc{E}(v_\eta g_p)\big)}^2 \big] \geq k_1 \cdot \bigg|\sum_{i =1 }^m \sum_{j=1}^n {(\eta_i w_j)}^2 \bigg| = k_1,
            \end{equation}
        where for the inequality, we only collect the coefficients for $\nabla F(x)$ terms.
        Hence, by (\ref{E: APP C Expansion}), there is a $\delta = \delta(n,m,M,k_1) \in (0,1)$ such that 
        when $|t| \leq \delta$, we have that 
            \begin{equation}
                \eta^T \cdot k_w (t) \cdot \eta = \frac{t^2}{2}\eta^T \cdot (k_w ''(0) + \sqrt{t}R) \cdot \eta \geq \frac{t^2}{2} \cdot k_1.
            \end{equation}

    \end{proof}
    
    \begin{remark}
        Theorem~\ref{Thm: APP C 1} gives an upper bound for the density $p_{(F(x),F(y))}(0,0)$ when $0<||x-y|| < \delta$. If we know that $(F(x),F(y))$ is always non-degenerate when $x\neq y$ and $x,y \in C_{R+1}$, then condition $(A3)$ in Definition~\ref{D: Axiom 1} will hold true automatically. Then, $k_2$ depends on 
    the covariance kernel fo the joint distribution of $(F(x),F(y))$ when $||x-y|| > \delta$.
    
    For condition $(B4)$ in Definition~\ref{D: Axiom 3}, in most of applications and concrete examples like Kostlan's ensemble or complex arithmetic random waves, first, we will know that the convergence for kernels ${\{K_{x,L}\}}_{L \in \mc{L}}$ is in $C^k$, i.e., for all $R > 0$,
        \begin{equation}
            \lim_{L \to \infty} ||K_{x,L}(u,v) - K_x(u-v)||_{C^k(C_{R+1} \times C_{R+1})} = 0.
        \end{equation}
    In this case, conditions $(B1)$, $(B2)$ and $(B4)$ will hold true if for all compact set $Q \subset U$, and $x \in Q$, $K_x(u-v)$ satisfies a uniform \textit{ $(R; M(Q),k_1(Q),k_2(x))$-assumptions}. In particular, for $(B4)$, if we know that for all $x \in U$ and $R > 0$,
        \begin{equation}
            \lim_{L \to \infty} \sup_{u,v \in C_{R+1}}||K_{x,L}(u,v) - K_x(u-v)|| = 0,
        \end{equation}
    and $K_x$, or equivalently $F_x$, satisfies $(A3)$ in Definition~\ref{D: Axiom 1} for some $k_2 = k_2(x)$, then when $||u-v|| > \delta$, the quantitative two-point nondegeneracy of $K_{x,L}$, or equivalently $F_{x,L}$, holds true from this uniform convergence.
    When $||u-v|| \leq \delta$, one can just apply Theorem~\ref{Thm: APP C 1} because if $K_{x,L}$ converges to $K_x$ in $C^k$, then $K_{x,L}$ satisfies \textit{ $(R; 2M(Q),k_1(Q)/2)$-assumptions} since $K_x$ satisfies \textit{ $(R; M(Q),k_1(Q))$-assumptions}.
    \end{remark}



\end{appendix}

\section*{Declarations}
\bigskip

{\bf Acknowledgements} The author would like to thank his advisor, Professor Fang-Hua Lin, for his continuous support and encouragement. 
The author would also like to thank Professor Paul Bourgade and Professor Robert V.~Kohn for very helpful discussions and suggestions in the finalizing stage.
Professor Yuri Bakhtin brought~\cite{GS04} and Professor Ao Sun brought~\cite{CL58} to the author's attention, and the author wants to thank them. 
The author also wants to thank Professor Ao Sun and Professor Chao Li for their encouragement in 2021.
This work was completed when the author studied at NYU as a PhD student, partially supported by the NSF grants DMS2247773, DMS2055686, and DMS2009746.


\bibliographystyle{acm} 
\bibliography{RandomZeroSetsNewIntroduction3.bib}

\end{document}